\newtheorem{thm}{Theorem}[section]
\newtheorem{conj}[thm]{Conjecture}
\newtheorem{lemma}[thm]{Lemma}
\newtheorem{prop}[thm]{Proposition}
\newtheorem{cor}[thm]{Corollary}
\newtheorem{definition}{Definition}[section]
\newtheorem{eg}[thm]{Example}
\newtheorem{notation}[definition]{Notation}
\def\Z{{\mathbb Z}}
\def\Q{{\mathbb Q}}
\def\pone{{\mathbb P}^1}
\def\ratd{{\mathrm{Rat}_d}}
\def\pgltwo{{\mathrm{PGL}_2}}
\def\OC{{\mathcal{O}_C}}
\def\OCp{{\mathcal{O}_{C,p}}}
\def\mmp2{{\mathfrak{m}}_p / {\mathfrak{m}}_p^2}
\def\Ratd{\textrm{Rat}_d}
\def\Pk{\mathbb{P}_k^{2d+1}}
\def\a{\mathbf{a}}
\def\b{\mathbf{b}}
\def\c{\mathbf{c}}
\def\PP{\mathbb{P}}
\def\div{\mathrm{div}}
\def\Pic{\mathrm{Pic}}
\def\Div{\mathrm{Div}}
\def\GL{\mathrm{GL}}
\def\PGL{\mathrm{PGL}}
\def\Rat{\mathrm{Rat}}
\def\Specmax{\mathrm{Specmax}}
\def\Ecal{\mathcal{E}}
\def\Mcal{\mathcal{M}}
\def\Ocal{\mathcal{O}}
\def\Cfrak{\mathfrak{f}}
\def\Ufrak{\mathfrak{U}}
\title[Resultant and Conductor]{Resultant and Conductor of Geometrically Semi-stable Self Maps of the Projective Line Over a Number Field or Function Field}
\subjclass[2010]{37P05, 14L24, 11G05, 14G99, 37P45}
\date{\today}
\thanks{The three authors recieved funding from the NSF Grants DMS-0854746 and DMS-0739346. The first author was also partially funded by PSC Cuny grant EG 36276.}
\keywords{resultant, conductor, discriminant, bad reduction, critical bad reduction, Latt\`{e}s map, minimality,self map of the projective line}
\author{Lucien Szpiro}
\address{Lucien Szpiro\\
        Ph.D. Program in Mathematics\\
        CUNY Graduate Center\\
        365 Fifth Avenue, New York, NY 10016-4309 U.S.A.}
\email{lszpiro@gc.cuny.edu}
\author{Michael Tepper}
\address{Michael Tepper\\
        Division of Science and Engineering\\
        Penn State Abington\\
        1600 Woodland Road\\
        Abington, PA 19001 U.S.A.}
\email{mlt16@psu.edu}
\author{Phillip Williams}
\address{Phillip Williams\\
        Ph.D. Program in Mathematics\\
        CUNY Graduate Center\\
        365 Fifth Avenue, New York, NY 10016-4309 U.S.A.}
\email{philwill@gmail.com}
\begin{document}

\maketitle

\begin{abstract}
We study the minimal resultant divisor of self-maps of the projective line over a number field or a function field and its relation to the conductor. The guiding focus is the exploration of a dynamical analog to Theorem \ref{szpirostheorem}, which bounds the degree of the minimal discriminant of an elliptic surface in terms of the conductor. The main theorems  of this paper (\ref{degreetwominimal} and  \ref{ssminimaldegreetwo}) establish that, for a degree 2 map, semi-stability in the Geometric Invariant Theory sense on the space of self maps, implies minimality of the resultant.  We prove the ÔsingularÕ reduction of a semi-stable presentation coincides with the simple bad reduction (Proposition \ref{reductionlocus}).  Given an elliptic curve over a function field with semi-stable bad reduction, we show the associated Latt\`{e}s map has unstable bad reduction (Example \ref{unstable}).  Degree $2$ maps in normal form with semi-stable bad reduction are used to construct a counterexample (Example \ref{dynamicalszpirofalse}) to a simple dynamical analog to Theorem \ref{szpirostheorem}. Finally, we consider the notion of ``critical bad reduction,'' and show that a dynamical analog to Theorem \ref{szpirostheorem}, using the locus of critical bad reduction to define the conductor (Example \ref{alltogether}, Proposition \ref{critnumberfield}), is reasonable.\end{abstract}

\setcounter{tocdepth}{2}
\renewcommand{\contentsname}{Content}
\tableofcontents

\section{Introduction}

\subsection{The minimal resultant and semi-stability}
Let $k$ be an algebraically closed field of characteristic $0$. In 1978 the first author proved a theorem which bounds the minimal discriminant of a semi-stable elliptic surface in terms of the conductor \cite{Szpiro1} (Lemma 3.2.2 and Proposition 4.2), \cite{Szpiro3}.
\begin{thm}
\label{szpirostheorem}
Let $f: \Ecal \rightarrow C$ be a proper and flat morphism of a projective surface $\Ecal$, smooth over $k$, to a curve $C$, projective, smooth, of genus q and geometrically connected over $k$. Suppose that the generic fiber of $f$ is an elliptic curve, $E$, smooth and geometrically connected over the function field of $C$. Suppose, further, that $f$ is not isotrivial and the degenerate fibers are semi-stable. Then, if $\Delta_{\Ecal}$ is the discriminant divisor of $\Ecal$ and if $s$ is the number of geometric points of $C$ where the fibers are not smooth, one has:
$$deg (\Delta_{\Ecal}) \leq 6(2q-2+s).$$
\end{thm}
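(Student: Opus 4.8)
The plan is to reduce the whole statement to Riemann--Hurwitz for the $j$-invariant morphism $j\colon C \to \mathbb{P}^1$. First I would pass to the relatively minimal model of $\Ecal$, and (replacing the generic fibre $E$ by its Jacobian, which for semi-stable reduction has the same Kodaira fibre types, hence the same $j$-invariant and the same discriminant) assume that $f$ has a section, so that minimal Weierstrass models over the local rings of $C$ are available; none of this changes $\deg(\Delta_{\Ecal})$ or the set of bad points. Since $f$ is not isotrivial, $j$ is non-constant; set $N = \deg j$.

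The first key step is the identity $\deg(\Delta_{\Ecal}) = N$. Over a local ring $\Ocal_{C,p}$, with $\Delta, c_4, c_6$ the invariants of a minimal Weierstrass model, one has $j = c_4^3/\Delta$, so $v_p(j) = 3\,v_p(c_4) - v_p(\Delta)$. By semi-stability every bad fibre is of type $I_{n}$ with $n = v_p(\Delta) \ge 1$ and $v_p(c_4) = 0$ (multiplicative reduction), hence $v_p(j) = -n$; at every good fibre $v_p(\Delta) = 0$ and $v_p(j) \ge 0$. Thus the bad points of $C$ are exactly the poles of $j$ — there are $s$ of them — and the multiplicity of the polar divisor $j^{*}(\infty)$ at such a point is the number $n$ of components of the fibre, so $\deg(\Delta_{\Ecal}) = \sum_{p \text{ bad}} n_p = \deg(j^{*}(\infty)) = N$.

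Next I would run Riemann--Hurwitz for $j\colon C \to \mathbb{P}^1$ (tame, since $\mathrm{char}\,k = 0$):
$$2q - 2 \;=\; -2N + \deg(R),$$
and bound $\deg(R)$ from below using the ramification over the three special values $0$, $1728$, $\infty$ of the $j$-line. Over $\infty$ there are exactly $s$ preimages, with ramification indices $n_1,\dots,n_s$ summing to $N$, contributing $\sum_i(n_i-1) = N - s$. Over $0$ every preimage $p$ has good reduction (its $j$-value is $0 \ne \infty$), so $v_p(\Delta) = 0$ and the ramification index $e_p = v_p(j) = 3\,v_p(c_4)$ is a positive multiple of $3$; hence $e_p - 1 \ge \tfrac{2}{3}e_p$ and this locus contributes at least $\tfrac{2}{3}N$. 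Using the identity $j - 1728 = c_6^2/\Delta$, the same reasoning shows every preimage of $1728$ has good reduction and even ramification index, contributing at least $\tfrac{1}{2}N$. Discarding the ramification over all other points,
$$\deg(R) \;\ge\; \bigl(1 + \tfrac{2}{3} + \tfrac{1}{2}\bigr)N - s \;=\; \tfrac{13}{6}N - s,$$
and combining with Riemann--Hurwitz gives $2q - 2 + s \ge \tfrac{13}{6}N - 2N = \tfrac{1}{6}N$, i.e. $N \le 6(2q - 2 + s)$. With $\deg(\Delta_{\Ecal}) = N$ this is the assertion.

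The delicate point — and the part I expect to take real work — is the local structure at $j = 0$ and $j = 1728$: namely that in a minimal Weierstrass equation the ramification of $j$ over these two values is forced to be divisible by $3$, respectively $2$, at every good-reduction preimage. This is exactly where Kodaira's classification of singular fibres (or an explicit minimal-model analysis of $c_4$, $c_6$, $\Delta$) is needed; the reduction to a minimal model with a section, and the semi-stable bookkeeping $v_p(\Delta) = n$ on type-$I_n$ fibres, are the only other points requiring care, and both are standard.
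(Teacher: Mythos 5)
Your argument is correct, but note that the paper does not prove Theorem \ref{szpirostheorem} at all: it is quoted as background from \cite{Szpiro1} (Lemma 3.2.2, Proposition 4.2) and \cite{Szpiro3}, so there is no in-text proof to match. The classical proof behind those references is genuinely different from yours: it writes $\deg(\Delta_{\Ecal})=12\deg(f_*\omega_{\Ecal/C})$ and bounds the Hodge bundle via the Kodaira--Spencer map, whose non-vanishing (non-isotriviality) gives $2\deg(f_*\omega_{\Ecal/C})\leq 2q-2+s$ from a nonzero map $(f_*\omega)^{\otimes 2}\rightarrow \Omega^1_C(\log S)$; that route is what makes the characteristic-$p$ refinement (with the inseparability factor the paper alludes to) possible. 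Your route is the elementary $j$-map argument: $\deg(\Delta_{\Ecal})=\deg(j)$ under semi-stability (poles of $j$ are exactly the $I_n$ fibres with multiplicity $n=v_p(\Delta)$), then Riemann--Hurwitz for $j:C\rightarrow\PP^1$ with the forced divisibilities $3\mid e_p$ over $j=0$ and $2\mid e_p$ over $j=1728$ at good-reduction points (from $j=c_4^3/\Delta$ and $j-1728=c_6^2/\Delta$ with $\Delta$ a unit), giving $\deg(R)\geq \frac{13}{6}N-s$ and hence $N\leq 6(2q-2+s)$. Your computations check out, and the reductions you flag (relatively minimal model, existence of a section --- which is automatic here since the generic fibre is an elliptic curve, so the Jacobian step can be dropped --- and the Kodaira/Tate bookkeeping $v_p(\Delta)=n$, $v_p(c_4)=0$ on $I_n$ fibres) are indeed the only inputs needed. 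The trade-off between the two approaches: yours is more elementary and self-contained in characteristic $0$ (which is the setting of the paper), but it relies on tameness of $j$ and on $0$, $1728$ being distinct, so it does not yield the characteristic-$p$ statement that Szpiro's Hodge-theoretic argument provides.
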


Theorem~\ref{szpirostheorem}, as originally stated, actually gives a bound for the characteristic $p$ case as well. The fact that the bad fibers are assumed to be semi-stable implies that the discriminant divisor, bounded by the theorem, is minimal, in the sense defined below. In \cite{Szpiro2}, the result is strengthened to include the case where reduction is not assumed to be semi-stable.

Associated to any elliptic curve is the Latt\`es self map of $\mathbb{P}^1$. This is obtained by looking at the $x$ coordinate of a Weierstrass equation for the curve under the multiplication by 2 (or, more generally, multiplication by $n$) endomorphism induced by the group structure of the elliptic curve. Motivated by this connection, in this paper we explore a ``dynamical analogue'' to Theorem \ref{szpirostheorem}.  We can formulate what it means for a dynamical system to have bad reduction, and we can construct an associated divisor derived from this (a conductor).  For dynamical systems there is a natural analog to the discriminant, called the resultant, which also tells us whether a dynamical system has bad reduction. Then we can ask whether there is a bound on the ratio of the degree of the \textit{minimal} resultant divisor to the degree of the conductor divisor. (In the number field case degree is replaced by norm for an effective divisor.)

In this paper we analyze in detail Latt\`es maps and maps of degree 2 on $\mathbb{P}^1$. The main results are Theorems \ref{ssminimaldegreetwo} (the function field case) and \ref{degreetwominimal} (the number field case) which assert that semi-stable reduction into the space parametrizing self maps of degree 2 for the action of $\mathrm{PGL}_2$ implies minimality of the resultant.

Other notable results in this paper are:
 
\begin{itemize}
\item The Latt\`es map associated to a semi-stable elliptic curve (i.e an elliptic curve with multiplicative reduction) is  never semi-stable in the space of maps of degree 4 for the action of $\mathrm{PGL}_2$ (Example \ref{unstable})
\item If the conductor is defined as the support of the "simple bad reduction" (i.e where the self map has a lower degree in reduction) then we show by constructing a counterexample that an inequality of the sort mentioned above is not possible (Example \ref{dynamicalszpirofalse}).
\item If one uses the more refined notion of critical conductor (Definition \ref{criticalconductor}) the counterexample just mentioned ceases to be a counterexample in the function field case. Moreover, in the number field case, the question of whether the counterexample still applies can can be reduced to the original conjecture of the first author about the discriminant of elliptic curves.
\end{itemize}

\newpage

\subsection{Setup and Notation}\label{RationalAndResultant}

The general context for the rest of the paper is as follows: we are considering dynamical systems of the projective line over a field $K$ which is the stalk at the generic point of a noetherian integral one dimensional scheme $C$ whose local rings are discrete valuation rings. Sometimes, by abuse of language, we will call these schemes curves. Throughout, we may also assume for simplicity that $K$ is characteristic $0$. Two fields of particular interest are $K$ a number field (here $C=\mathrm{Spec}(\mathcal{O}_K)$) and $K = k(C)$ a function field of an non-singular curve (possibly projective) over an algebraically closed field $k$ of characteristic $0$. By $p \in C$, we always mean a closed point of the scheme (unless otherwise specified), and for $p\in C$, $\kappa(p)$ denotes the residue field at $p$.


\section{Background}

\subsection{Parameterizing Morphisms of $\PP^1$}\label{ParameterizingMorphisms}
Let $F$ be a field, and let $\varphi : \pone_F \rightarrow \pone_F$ be a rational map.  Since the target is a projective curve, every rational map on $\pone_F$ is, in fact, a morphism. Thus, if we choose coordinates $[X,Y]$ for $\pone_F$, $\varphi$ is given by two homogeneous polynomials of the same degree, subject to the restriction that the polynomials do not vanish at a common point of $\pone_{\bar{F}}$. Such a choice of coordinates determines what we will call a \textbf{presentation} of $\varphi$. 
\begin{definition} Let $\varphi$ be a morphism of degree $d$. A \textbf{presentation} $\Phi$ of $\varphi$, with respect to the choice of coordinates $[X,Y]$, is the point $[\a,\b] = [a_0, ..., a_d, b_0, ..., b_d] \in \mathbb{P}^{2d+1}(F)$  given by the coefficients of a pair of homogeneous polynomials $F_\mathbf{a} = a_o X^d + a_1X^{d-1}Y + ... + a_d Y^d$, $F_\mathbf{b} = b_o X^d + b_1X^{d-1}Y + ... + b_d Y^d$ defining the morphism $\varphi$.
\end{definition}
\begin{definition} Let $A$ be a ring with fraction field $K$ and let $\Phi$ be a presentation of a morphism $\varphi$ over $K$.  A \textbf{model} of $\Phi$ is an affine representative $(\a, \b)$ for the projective point $\Phi = [\a, \b]$. A \textbf{model of} $\Phi$ \textbf{over} $A$ is a model of $\Phi$ with coefficients in $A$. If $p \in C$, a $p$-model of $\varphi$ is a model $(\a,\b)$ of $\varphi$ over $\OCp$, such that at least one of the coordinates of $(\a,\b)$ is a unit.
\end{definition}
The \textbf{resultant} is a polynomial constructed from the coefficients of two polynomials, which we will denote by $\rho$. Basic information on the resultant can be found in \cite{SilvermanDynamics}. The condition that two degree $d$ polynomials over $F$ share no common zero over $\pone(\bar{F})$ is equivalent to the non-vanishing of their resultant.  The resultant $\rho$ is homogeneous of degree $2d$ in the coefficients $a_0, ... , a_d, b_o, ... , b_d$. In fact, $\rho$ is bi-homogeneous of degree $d$ in each of $a_0, ... , a_d$ and $b_o, ... , b_d$.  It has coefficents in $\Z$. One can define the resultant as the determinant of the $2d \times 2d$ matrix:

$$\left(\begin{array}{ccccccccc}a_0 & a_1 & \cdots & a_{d-1} & a_d & 0 & \cdots & \cdots & 0 \\0 & a_0 & a_1 & \cdots & a_{d-1} & a_d & 0 & \cdots & 0 \\ &  & \ddots & \ddots & \ddots & \ddots & \ddots &  &  \\ &  & \ddots & \ddots & \ddots & \ddots & \ddots &  &  \\0 & \cdots & \cdots & a_0 & a_1 & \cdots & \cdots & a_{d-1} & a_d \\b_0 & b_1 & \cdots & b_{d-1} & b_d & 0 & \cdots & \cdots & 0 \\0 & b_0 & b_1 & \cdots & b_{d-1} & b_d & 0 & \cdots & 0 \\ &  & \ddots & \ddots & \ddots & \ddots &  &  &  \\ &  & \ddots & \ddots & \ddots & \ddots & \ddots &  &  \\0 & \cdots & \cdots & b_0 & b_1 & \cdots & \cdots & b_{d-1} & b_d\end{array}\right)$$

\vspace{.25cm}
Silverman has shown that presentations of morphisms of degree $d$ over $F$, for a fixed choice of coordinates $[X,Y]$, are in one to one correspondence with $F$ valued points of an affine variety defined over $\Z$:
\begin{notation}$\mathrm{Rat}_d  := \mathbb{P}^{2d+1} \backslash V(\rho)$. This affine variety is isomorphic to $\mathrm{Spec}(R)$, where $$R= \Z[A_0, ... , A_d, B_o, ... , B_d]_{(\rho)}$$ is the subring of elements of degree zero in the localization $\Z[A_0, ... , A_d, B_o, ... , B_d]_{\rho}$.
\end{notation}

In fact, Silverman shows more generally that the scheme $\Rat_d$ gives a universal family parameterizing rational maps over any base scheme.  See \cite{SilvermanSpace} for details.


\subsection{The group action and the quotient scheme}
Also important for our purposes will be the conjugation action of $\mathrm{PGL}_2(F)$ on morphisms. We can describe this action in terms of what it does to the coefficients of the parameterization described above; in fact we have a group action not just on $\mathrm{Rat}_d(F)$ but on the entire projective space $\mathbb{P}^{2d+1}(F)$.

If $\Gamma =\left(\begin{array}{cc}\alpha & \beta \\\gamma & \delta \end{array}\right)\in\GL_2(F)$ and $(X,Y)$ are coordinates for $\mathbb{A}_F^2$, define $(X,Y) \cdot \Gamma$ to be $(\alpha X + \beta Y, \gamma X + \delta Y)$.

If $\Phi = [\a, \b]$ is a presentation of $\varphi$, define a $\GL_2(F)$ action on the model $(\a,\b)$ by sending it to the new the coefficients $(\a^{\Gamma}, \b^{\Gamma})$ obtained from the following pair of polynomials:
$$(F_\mathbf{a}( (X,Y) \cdot \Gamma), F_\mathbf{b}((X,Y) \cdot \Gamma)) \cdot \Gamma^{\mathrm{adj}},$$
where $\Gamma^{\mathrm{adj}}=\left(\begin{array}{cc}\delta & -\beta \\-\gamma & \alpha \end{array}\right)$.  This is actually a group action on all of $\mathbb{A}^{2d+2}(F)$. And it descends to a well defined group action when passing to $PGL_2(F)$ and the projective space $\mathbb{P}^{2d+1}(F)$. We denote this action by $\Phi^{\Gamma} = [\a^{\Gamma}, \b^{\Gamma}]$. It sends $\mathrm{Rat}_d$ to itself, and is thus a group action on morphisms (it corresponds to the usual conjugation action, with respect to the basis $(X,Y)$). This follows from the following formula for how the resultant form transforms under the $\GL_2(F)$ action:
\begin{align}
\label{ResGrpAction}
\rho(\a^{\Gamma},\b^{\Gamma}) = (\mathrm{det}(\Gamma))^{d^2+d} \rho(\a,\b).\end{align}
Thus the non-vanishing of the resultant is preserved by the group action.

When a group acts on a scheme, geometric invariant theory (GIT) gives conditions under which there is a good notion of a quotient scheme for this action. The machinary of GIT is developed in \cite{Dolgachev} and \cite{MFK}. In \cite{SilvermanSpace}, Silverman describes how it is applied to $\ratd$. He constructs  a quotient scheme $\Mcal_d$ and a natural map $\ratd \rightarrow \Mcal_d$. The scheme $\Mcal_d$ is affine and can be described explicitly as the spectrum of the ring of functions invariant under the group action.

For technical reasons, one must work instead with the special linear group $SL_2$ in these constructions, instead of $PGL_2$. We will not go into details in regards to this issue, but what we will need for our purposes is the following: if $k$ is an algebraically closed field, then the orbits of $SL_2(k)$ and $PGL_2(k)$ on $\mathbb{P}^{2d+1}(k)$ are identical, and the points $\Mcal_d(k)$ are in one to one correspondence with the orbit of points in $\Ratd(k)$.

GIT gives two open subsets of $\mathbb{P}^{2d+1}$ containing $\ratd$ of special interest,
$$\ratd \subseteq (\mathbb{P}^{2d+1})^s \subseteq (\mathbb{P}^{2d+1})^{ss}.$$ These are called the stable locus and semi-stable locus, respectively. Using GIT, one can define a \textit{geometric quotient} $(\Mcal_d)^s$ on the former, and a \textit{categorical quotient} $(\Mcal_d)^{ss}$ on the latter, and this gives rise to a sequence of inclusions,
$$\Mcal_d \subseteq (\Mcal_d)^s \subseteq (\Mcal_d)^{ss}.$$

There is a general numerical criterion, given by GIT, for when a point is stable or semi-stable.  For algebraically closed fields, Silverman in \cite{SilvermanDynamics} and \cite{SilvermanSpace} has also worked out what this means for the coefficients.
\begin{thm}\label{numericalcriterion}
Let $k$ be algebraically closed.
\begin{quote}
{\bf (1)}  A point of $\PP^{2d+1}(k)$ is not in $(\PP^{2d+1})^{ss}(k)$ if and only if, after a $SL_2(k)$ conjugation, it satisfies $$a_i = 0\ \textrm{ for all }\ i \leq \frac{d-1}{2}, \textrm{ and } \ b_j = 0\ \textrm{ for all }\ j \leq \frac{d+1}{2}.$$
{\bf (2)}  A point of $\PP^{2d+1}(k)$ is not in $(\PP^{2d+1})^s(k)$ if and only if, after an $SL_2(k)$ conjugation, it satisfies $$a_i = 0\ \textrm{ for all }\ i < \frac{d-1}{2}, \textrm{ and } \ b_j = 0\ \textrm{ for all }\ j < \frac{d+1}{2}.$$
\end{quote}
\end{thm}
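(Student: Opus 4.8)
The statement is the classical Hilbert--Mumford calculation, so the plan is to run that machine explicitly. First I would recall from \cite{MFK} (see also \cite{Dolgachev}) that, for the conjugation action of the reductive group $\mathrm{SL}_2$ on $\mathbb{P}^{2d+1}$ linearized by $\mathcal{O}(1)$ --- note that for each fixed $\Gamma$ the map $(\a,\b)\mapsto(\a^{\Gamma},\b^{\Gamma})$ on $\mathbb{A}^{2d+2}$ is linear, so this is the natural linearization --- a point $x=[\a,\b]$ is semi-stable (resp.\ stable) if and only if $\mu(x,\lambda)\ge 0$ (resp.\ $\mu(x,\lambda)>0$) for every nontrivial one-parameter subgroup $\lambda$ of $\mathrm{SL}_2(k)$, where $\mu(x,\lambda)=-\min\{\,r : \tilde{x}_r\ne 0\,\}$ and $\tilde{x}=\sum_r\tilde{x}_r$ is the decomposition of a lift of $x$ into $\lambda$-weight spaces.

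Next I would reduce to a single one-parameter subgroup. Over $k$ every nontrivial one-parameter subgroup of $\mathrm{SL}_2(k)$ is conjugate to $t\mapsto\mathrm{diag}(t^n,t^{-n})$ for some $n\ge 1$; using $\mu(x,g\lambda g^{-1})=\mu(g^{-1}\!\cdot x,\lambda)$, the identity $\mu(x,\lambda^n)=n\,\mu(x,\lambda)$ for $n\ge 1$, and the fact that $t\mapsto\mathrm{diag}(t,t^{-1})$ and $t\mapsto\mathrm{diag}(t^{-1},t)$ are conjugate by the Weyl element, it follows that $x$ is semi-stable (resp.\ stable) if and only if $\mu(g\!\cdot x,\lambda_0)\ge 0$ (resp.\ $>0$) for every $g\in\mathrm{SL}_2(k)$, where $\lambda_0(t)=\mathrm{diag}(t^{-1},t)$. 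Hence $x$ fails to be semi-stable exactly when some translate $g\!\cdot x$ has $\mu(g\!\cdot x,\lambda_0)<0$, and $x$ fails to be stable exactly when some translate has $\mu(g\!\cdot x,\lambda_0)\le 0$.

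Then I would compute the $\lambda_0$-weights of the coordinate functions from the formula for the action in the previous subsection: substituting $(X,Y)\cdot\mathrm{diag}(t^{-1},t)=(t^{-1}X,tY)$ multiplies the coefficient of $X^{d-i}Y^i$ in $F_{\a}$ by $t^{2i-d}$, and the ensuing multiplication by $\Gamma^{\mathrm{adj}}=\mathrm{diag}(t,t^{-1})$ rescales $F_{\a}$ by $t$ and $F_{\b}$ by $t^{-1}$; therefore the coordinate $a_i$ has weight $2i-d+1$ and $b_j$ has weight $2j-d-1$. Consequently $\mu(g\!\cdot x,\lambda_0)<0$ precisely when every nonzero coordinate of $g\!\cdot x$ has strictly positive weight, i.e.\ $a_i=0$ whenever $2i-d+1\le 0$ and $b_j=0$ whenever $2j-d-1\le 0$, which are the conditions $i\le\frac{d-1}{2}$ and $j\le\frac{d+1}{2}$ of part (1). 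Likewise $\mu(g\!\cdot x,\lambda_0)\le 0$ precisely when every nonzero coordinate has nonnegative weight, yielding $a_i=0$ for $i<\frac{d-1}{2}$ and $b_j=0$ for $j<\frac{d+1}{2}$, which is part (2).

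The only genuine work is in the last two steps: making sure that testing the single subgroup $\lambda_0$ against all $\mathrm{SL}_2(k)$-translates of $x$ really accounts for all one-parameter subgroups (the Weyl-symmetry and positive-rescaling reductions), and keeping the bookkeeping straight in the weight computation, since the $\Gamma^{\mathrm{adj}}$-twist in the definition of the action is exactly what shifts the threshold between $\frac{d-1}{2}$ and $\frac{d+1}{2}$ for the two families of coordinates; an error of one there would give the wrong bound. None of this is deep, and in fact this computation is essentially carried out in Silverman's \cite{SilvermanDynamics} and \cite{SilvermanSpace}, which I would also cite for the technical point that one works with $\mathrm{SL}_2$ rather than $\mathrm{PGL}_2$ here.
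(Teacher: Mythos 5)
Your argument is correct: the weight bookkeeping (weight $2i-d+1$ on $a_i$ and $2j-d-1$ on $b_j$ for $\lambda_0(t)=\mathrm{diag}(t^{-1},t)$, reflecting the $\Gamma^{\mathrm{adj}}$-twist) and the reduction of all one-parameter subgroups to $SL_2(k)$-translates of $\lambda_0$ are exactly what is needed, and they yield the stated thresholds $\frac{d-1}{2}$ and $\frac{d+1}{2}$. The paper itself gives no proof of this theorem but attributes it to Silverman (\cite{SilvermanDynamics}, \cite{SilvermanSpace}), and your Hilbert--Mumford computation is essentially the argument carried out there, so this matches the paper's (cited) approach.
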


There is a reformulation of this numerical criterion that is sometimes more intuitive for applications. We first came across the statement of this criterion in \cite{Ressayre}, though it seems well known.
\begin{thm}
\label{enhancednumericalcriterion}
Let $k$ be algebraically closed.

Let $ [\a, \b] \in \PP^{2d+1}(k)$. Let $\psi$ be the rational map on $\pone_k$ obtained by cancelation of the greatest common factor of $F_\mathbf{a}$ and $F_\mathbf{b}$. Note that $\psi$ has degree $d-D$, where $D$ is the degree of the greatest common factor.
\begin{quote}
{\bf (1)}  Suppose $d=2r$ is even. Then  $[\mathbf{a},\mathbf{b}]$ is unstable if and only if it is not stable, which happens if and only if either

(a) $F_\mathbf{a}$ and $F_\mathbf{b}$ have a common root in $\pone_k$ of order $r+1$ or

(b) $F_\mathbf{a}$ and $F_\mathbf{b}$ have a common root in $\pone_k$ of order $r$ which is also a fixed point of $\psi$.\\
{\bf (2)}  Suppose $d=2r+1$ is odd. Then $[\mathbf{a}, \mathbf{b}]$ is unstable if and only if either

(a) $F_\mathbf{a}$ and $F_\mathbf{b}$ have a common root in $\pone_k$ of order $r+2$ or

(b) $F_\mathbf{a}$ and $F_\mathbf{b}$ have a common root in $\pone_k$ of order $r+1$ which is also a fixed point of $\psi$.\\
Meanwhile, $[\mathbf{a}, \mathbf{b}]$ is not stable if and only if either

(a') $F_\mathbf{a}$ and $F_\mathbf{b}$ have a common root in $\pone_k$ of order $r+1$ or

(b') $F_\mathbf{a}$ and $F_\mathbf{b}$ have a common root in $\pone_k$ of order $r$ which is also a fixed point of $\psi$.
\end{quote}
\end{thm}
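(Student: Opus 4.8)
The plan is to derive Theorem~\ref{enhancednumericalcriterion} from the coefficient form of the numerical criterion, Theorem~\ref{numericalcriterion}, by translating the vanishing of coefficients into orders of vanishing of $F_{\a}$ and $F_{\b}$ at a point of $\pone_k$. Writing $F_{\a} = a_0 X^d + a_1 X^{d-1}Y + \cdots + a_d Y^d$, the condition $a_0 = \cdots = a_{m-1} = 0$ is precisely the condition that $F_{\a}$ vanish to order $\geq m$ at $[1:0]$, and similarly for $F_{\b}$. Reading the exponents in Theorem~\ref{numericalcriterion}: when $d = 2r$ the numbers $\tfrac{d-1}{2}$ and $\tfrac{d+1}{2}$ are not integers, so ``$i \leq \tfrac{d-1}{2}$'' and ``$i < \tfrac{d-1}{2}$'' cut out the same set of integers, hence the unstable and the non-stable conditions coincide; this is the first assertion of part~(1). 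In each of the remaining cases the criterion says that after an $\sltwo(k)$-conjugation $F_{\a}$ vanishes to order $\geq s$ and $F_{\b}$ to order $\geq s+1$ at $[1:0]$, where $s = r$ when $d = 2r$, $s = r+1$ for the unstable locus when $d = 2r+1$, and $s = r$ for the non-stable locus when $d = 2r+1$. Since $\sltwo(k)$ acts transitively on $\pone_k$, and since conjugating a presentation is, up to an invertible constant recombination of $F_{\a}$ and $F_{\b}$ that changes neither their common zeros nor the reduced map $\psi$, simply a change of coordinates on $\pone$ (so it carries $\gcd(F_{\a},F_{\b})$ and $\psi$ to their conjugates), each phrase ``after an $\sltwo(k)$-conjugation, $[1:0]$ is \ldots'' may be replaced by ``there is a point $P \in \pone_k$ such that \ldots''.

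Thus the theorem reduces to the following elementary fact, in which ``common root of order $m$'' means ``common root of order $\geq m$'': for an integer $s \geq 0$, there is a point $P$ with $\mathrm{ord}_P F_{\a} \geq s$ and $\mathrm{ord}_P F_{\b} \geq s+1$ if and only if either (a) $F_{\a}$ and $F_{\b}$ have a common root of order $\geq s+1$, or (b) they have a common root of order $\geq s$ that is a fixed point of $\psi$; taking $s = r$, $s = r+1$, $s = r$ reproduces the alternatives (a)/(b), (a)/(b), (a')/(b') of the statement. To prove the fact, suppose first there is such a $P$ and put $g = \gcd(F_{\a},F_{\b})$, so $\mathrm{ord}_P g = \min(\mathrm{ord}_P F_{\a}, \mathrm{ord}_P F_{\b}) \geq s$. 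If $\mathrm{ord}_P g \geq s+1$, then $P$ is a common root of order $\geq s+1$, giving (a). Otherwise $\mathrm{ord}_P F_{\a} = s$ and $\mathrm{ord}_P F_{\b} \geq s+1$; in particular $F_{\a}$ and $F_{\b}$ are not proportional, so $\psi = [F_{\a}/g : F_{\b}/g]$ is a genuine rational map, and $\mathrm{ord}_P(F_{\a}/g) = 0$ while $\mathrm{ord}_P(F_{\b}/g) \geq 1$; after moving $P$ to $[1:0]$ this says $\psi(P) = [1:0] = P$, giving (b). Conversely, (a) at once yields $\mathrm{ord}_P F_{\a} \geq s+1 \geq s$ and $\mathrm{ord}_P F_{\b} \geq s+1$. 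In case (b), move the common root $P$ to $[1:0]$; since $F_{\a}/g$ and $F_{\b}/g$ have no common zero, they do not both vanish at $P$, so $\psi(P) = P = [1:0]$ forces $(F_{\b}/g)(P) = 0$ and $(F_{\a}/g)(P) \neq 0$, whence $\mathrm{ord}_P F_{\a} = \mathrm{ord}_P g \geq s$ and $\mathrm{ord}_P F_{\b} > \mathrm{ord}_P g$, i.e.\ $\mathrm{ord}_P F_{\b} \geq s+1$. (If $F_{\a}$ and $F_{\b}$ are proportional then $\psi$ is constant; this degenerate case is checked directly, under the convention that a constant map has no fixed point, and there alternative (a) alone matches the criterion.)

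The main obstacle is organizational rather than conceptual: one must track the floor functions in Theorem~\ref{numericalcriterion} carefully enough to land on the correct value of $s$ in each of the even/odd and stable/semi-stable cases, and one must respect the asymmetry between $F_{\a}$ and $F_{\b}$ in that criterion. That asymmetry is exactly the mechanism behind alternative (b): it forces $F_{\b}$, not $F_{\a}$, to vanish to strictly higher order at the common root, which is precisely the statement that the common root is a fixed point of $\psi$ (the coordinate of $\psi(P)$ that must be nonzero is the first one). The only step needing a genuine, if short, computation is the $\pgltwo(k)$-equivariance of the formation of $\gcd(F_{\a},F_{\b})$ and of the reduced map $\psi$, which follows directly from the description of the group action recalled above.
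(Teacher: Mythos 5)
Your overall route — turn the coefficient conditions of Theorem~\ref{numericalcriterion} into orders of vanishing at $[1:0]$, then use the equivariance of $\gcd(F_{\mathbf{a}},F_{\mathbf{b}})$ and of $\psi$ under conjugation — is the same as the paper's, but the middle reduction is invalid and the ``elementary fact'' you reduce to is false as stated. Conjugation is not ``simply a change of coordinates on $\pone$'': by the definition of the action, $F_{\mathbf{a}}^{\Gamma}$ and $F_{\mathbf{b}}^{\Gamma}$ are invertible \emph{linear combinations} of the precomposed forms, so although the gcd and $\psi$ transform equivariantly, the asymmetric condition ``$\mathrm{ord}_P F_{\mathbf{a}}\geq s$ and $\mathrm{ord}_P F_{\mathbf{b}}\geq s+1$'' at a general point $P$ is not conjugation-invariant, and you may not replace ``after an $\sltwo(k)$-conjugation, $[1:0]$ satisfies\ldots'' by ``there is a point $P$ such that (for the original forms)\ldots''. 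Concretely, take $d=2$, $s=r=1$, $F_{\mathbf{a}}=X^2-XY$, $F_{\mathbf{b}}=X^2$. At $P=[0:1]$ one has $\mathrm{ord}_P F_{\mathbf{a}}=1$ and $\mathrm{ord}_P F_{\mathbf{b}}=2$, yet the only common root has order $1$ and is not fixed by $\psi=[X-Y:X]$ (it maps to $[1:0]$), so neither (a) nor (b) holds: your biconditional fails. Moreover a short computation shows no $\sltwo(k)$-conjugate of this point has $b_0=b_1=0$, so it is in fact semi-stable — your replacement step would declare it unstable. The precise place your proof breaks is the sentence ``after moving $P$ to $[1:0]$ this says $\psi(P)=[1:0]=P$'': if $P\neq[1:0]$, moving $P$ changes the pair (it recombines $F_{\mathbf{a}}$ and $F_{\mathbf{b}}$), and $\psi(P)=[1:0]$ does not make $P$ a fixed point.

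The repair is to keep the existential quantifier over conjugations where Theorem~\ref{numericalcriterion} puts it: apply your dichotomy only at the distinguished point $[1:0]$ to the \emph{conjugated} pair, where ``second coordinate of $\psi^{\Gamma}([1:0])$ vanishes'' genuinely is the fixed-point condition $\psi^{\Gamma}([1:0])=[1:0]$, and then transport alternatives (a) and (b) back to the original pair via the equivariance of the gcd and of $\psi$ (the short computation you correctly identify); conversely, given (a) or (b), conjugate so the relevant common root becomes $[1:0]$ and read off the coefficient vanishing. That is essentially the paper's (sketched) argument. Your parenthetical treatment of proportional pairs is also wrong: for $F_{\mathbf{a}}=F_{\mathbf{b}}=XY$ ($d=2$), conjugating by the matrix with rows $(1,1)$ and $(0,1)$ yields the pair $(XY+Y^2,\,0)$, so the point is unstable by Theorem~\ref{numericalcriterion}, yet there is no common root of order $2$; so alternative (a) alone does not match the criterion in the degenerate case.
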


\begin{proof}

\textit{(sketch)}

We will sketch the idea for the case where $d$ is even. The other cases are similar.

In the even case, the stable locus coincides with the semi-stable locus, and Theorem \ref{numericalcriterion} says that a point $[\a,\b]$ is outside of this set if and only if one of its conjugates $[\a',\b']$, under the $PGL_2(k)$ action, has the coefficients $a_o', \cdots a_{r-1}'$, $b_0', \cdots b_{r}'$ vanishing. This happens if and only if $Y$ is a common root of $F_{\a'}, F_{\b'}$ of order at least $r$. It is easy to see that $a_r'$ is nonzero also if and only the common root $Y$ is of order exactly $r$ and $[1,0]$ is a fixed point of the map obtained by cancelation. Checking that the group action preserves these properties (but changes the common root and fixed point in question), one can verify that statement (1) follows.
\end{proof}

Points which fall outside of $(\PP^{2d+1})^s$ are sometimes called \textbf{not stable} and those which fall outside of $(\PP^{2d+1})^{ss}$ are called  \textbf{unstable}. We will sometimes work with presentations defined over non-algebraically closed fields (for example, the residue fields of the ring of integers of a number field). Stability or semi-stability in this case is determined by considering these points over the algebraic closure of the field in question and applying Theorem \ref{numericalcriterion} or Theorem \ref{enhancednumericalcriterion}. 

\subsection{Singular and Bad Reduction of Rational maps}

Let us consider a morphism $\varphi$ and a presentation $\Phi \in \Rat_d(K)$. For each $p$, we may obtain a point in $\PP^{2d+1}({\kappa(p)})$ in a natural way, as follows.
\begin{notation}
Let $(\a,\b)$ be a model of $\Phi$. Set $c_i = a_i$ for $0 \leq i \leq d$ and $c_i = b_{i-d-1}$ for $d+1 \leq i \leq 2d+1$, and set $(\c) = (c_0, ... ,c_{2d+1})$. Let $p \in C$. Set \begin{align}n_p(\a, \b) & := \mathrm{min}_j{(v_p(c_j))} .\end{align}
\end{notation}

If $(\a,\b)$ is a $p$-model of $\Phi$, then clearly $n_p = 0$. Thus for any model $(\a,\b)$ of $\Phi$, if we chose a scalar $u_p$ such that $v_p(u_p) = -n_p$, then $(u_p \a, u_p \b)$ is a $p$-model.
\begin{definition}
\label{reduction}
Let $p \in C$ and let $(\a, \b)$ be a model of $\Phi$ over $\OCp$ such that $n_p(\a,\b) = 0$ (i.e. a p-model). \textbf{The reduction of $\Phi$ at $p$} is the point $\Phi_p = [\a(p), \b(p)] \in \PP^{2d+1}({\kappa(p)})$ obtained by evaluating each coefficient of $(\a,\b)$ at $p$ (i.e. looking at the image of that coefficient in the residue field). 
\end{definition}
It is easy to verify that this definition does not depend on the choice of the model. For most points, the reduction of $\Phi$ at $p$ will be a presentation of a morphism over $\kappa(p)$ of degree $d$. However, for a finite set of points, this will fail: the reduced coefficients will describe two polynomials with non-trivial common zeros, and thus will not describe a morphism of degree $d$. (We can, however, cancel the common zeros and obtain a morphism of lower degree). Where this happens is captured by evaluating the resultant form of an appropriate model.

More precisely, given any model $(\a, \b)$ and the choices above, by definition:
\begin{equation}
v_p(\rho(u_p \a, u_p \b)) = v_p( \rho ( \a, \b)) - 2d n_p(\a , \b).
\end{equation}

It can be verified that this number depends only on the presentation $[\a,\b]$.
\begin{notation}
Let $\Phi = [\a,\b]$ be a presentation of $\varphi$. Set: $$N_{\Phi, p}:=v_p( \rho ( \a, \b)) - 2d n_p(\a , \b)).$$
\end{notation}
\begin{definition}
Let $\varphi:\PP^1_K\rightarrow\PP^1_K$ and let $\Phi=[\a,\b]$ be a presentation of $\varphi$. The \textbf{resultant divisor of the presentation} $\Phi$ is
$$R_{\Phi} = R_{[\a,\b]} = \sum_{p \in C}{N_{\Phi,p}}[p].$$
We refer to the support of $R_{\Phi}$ as the \textbf{singular reduction locus of $\Phi$}, and if $p$ is in this support, we say that $\Phi$ has \textbf{singular reduction} at $p$. If $p$ is not in the support of $R_{\Phi}$, then we say $\Phi$ has \textbf{non-singular reduction} at $p$.
\end{definition}

Another way to define the resultant divisor of a presentation is the following: for each $p \in C$, the reduction of $\Phi$ at $p$ is an element of $\PP^{2d+1}(\kappa(p))$ which may or may not be in $\Ratd (\kappa(p))$. In fact, we may construct a morphism of schemes,
$$F_{\Phi}: C \rightarrow \PP^{2d+1}$$
for which when we compose with the natural inclusion morphism $\kappa(p) \hookrightarrow C$ the resulting point of $\PP^{2d+1}(\kappa(p))$ is precisely this reduction of $\Phi$ at $p$. The resultant divisor of the presentation $\Phi$ is then the divisor of zeros of the section of $F_{\Phi}^{*}O(2d)$ given by pulling back the resultant form.


\section{Minimality}
\subsection{The Minimal Resultant and the Conductor}

Given a morphism $\varphi$, the resultant divisors of two different presentations may be different. We may, in particular, be able to act in such a way that singular reduction becomes non-singular reduction. Therefore it is useful to look instead for a notion of a resultant divisor which is invariant under the $\pgltwo(K)$ action, and thus depends only on the morphism $\varphi$. To obtain this, we consult the discussion in \cite{SilvermanDynamics}, section 4.11, which develops the \textit{minimal} resultant in the number field case. The definition of the minimal resultant, as well as Proposition~\ref{valuationformulas} and Proposition~\ref{wclass}, are taken from this discussion.

\begin{definition}
\label{minimalresultant}
Let $\varphi$ be a morphism and $\Phi = [\a,\b]$ a presentation of $\varphi$. Let $$\epsilon_p(\varphi) = \underset{\Gamma \in PGL_2(K)}{\textrm{min}}N_{\Phi^{\Gamma}, p}.$$ The \textbf{minimal resultant} of $\varphi$ is
$$R_{\varphi} = \underset{p \in C}{\sum} \epsilon_p(\varphi)[p].$$
\end{definition}

This divisor is invariant under the $PGL_2(K)$ action, and has support on the points of $C$ for which every $PGL_2(K)$ conjugate of $\Phi$ (that is, every presentation of $\varphi$) has singular reduction.

\begin{definition}
\label{conductor}
Let $\varphi$ be a morphism. The \textbf{conductor} of $\varphi$ is the divisor: $$\Cfrak_{\varphi} = \sum_{p\in\textrm{Support}(R_\varphi)}[p].$$
If $p$ has a nonzero coefficient in $\Cfrak_{\varphi}$, then we say $\varphi$ has \textbf{bad reduction} at $p$. If the coefficient of $p$ in $\Cfrak_{\varphi}$ is zero, then we say $\varphi$ has \textbf{good reduction} at $p$.
\end{definition}
\begin{notation}
Let $D$ be a divisor on $C$. Then $(D)_p$ is the coefficient of $D$ at $p$.
\end{notation}

A dynamical analog to Theorem \ref{szpirostheorem} would bound the degree of the minimal resultant in terms of the degree of the conductor. Without additional assumptions, however, the dynamical analog to Theorem \ref{szpirostheorem} is not always true, at least for degree $2$ maps, as we see in the following theorem.
\begin{eg}
\label{dynamicalszpirofalse}
Let $K = k(t)$. For each $N \in \Z^{+}$, let $\varphi$ be the degree 2 morphism given by: $$\varphi = \frac{X^2 + \lambda_1XY} {\lambda_2XY + Y^2}$$ where $\lambda_1 = a+bt^N$, $\lambda_2 = a^{-1} + b't^N$, $a, b, b' \neq 0,1$, and $ab'+b/a = 0$. Then the degree of the conductor of $\varphi$ is at most 2, and the degree of the minimal resultant is at least $2N$.
\end{eg}
\begin{proof}
This follows immediately from Corollary \ref{counterexample0} and Corollary \ref{counterexample} in Section 4.
\end{proof}

There is a similar example for the number field case, in Example \ref{counterexample0numberfield} below.

\subsection{Conjugation}The difficulty in calculating $R_{\varphi}$ lies in understanding, for a given $p$, which presentation $\Phi$ of $\varphi$ truly realizes the minimal value for $N_{\Phi,p}$.

Let now $\Gamma \in GL_2(K)$, $\Gamma = \left(\begin{array}{cc}\alpha & \beta \\\gamma & \delta\end{array}\right)$. As above, we write $(\a^\Gamma, \b^\Gamma)$ for the new coefficients under the action of $\Gamma$ on a model of $\Phi$.  Recall that this is a group action that descends to the conjugation action under the projection to $PGL_2$ and projective space of the coefficients. Applying valuations to formula (\ref{ResGrpAction}) above, we have:
\begin{equation}\label{vpResGrpAction}
v_p(\rho (\a^\Gamma, \b^\Gamma)) = v_p(\rho( \a , \b )) + (d^2 + d) v_p(\mathrm{det}(\Gamma)).
\end{equation}
This tells us what the action does to the valuation of the resultant form. We can also say something about what happens to $n_p(\a, \b)$ under the same action. Let
$$v_p(\Gamma) = \mathrm{min}(v_p(\alpha), v_p(\beta), v_p(\gamma), v_p(\delta)).$$
Then we have an inequality
\begin{equation}\label{npGrpAction}
n_p(\a^\Gamma, \b^\Gamma)  \geq n_p(\a, \b) + (d+1)v_p(\Gamma).
\end{equation}
To see this, observe that each coefficient in $(\a^\Gamma, \b^\Gamma)$ is a sum of terms of the form,
$$(\textrm{coefficient of } (\a, \b)) \cdot ( \textrm{homogeneous polynomial of degree } d + 1 \textrm{ in }\Z[\alpha, \beta, \gamma, \delta]).$$

If we write $c_i^{\Gamma}$ for the $i$-th coefficient  of $(\a^\Gamma, \b^\Gamma)$, for each $i$, we have some polynomial $f$ and a $j$ such that,
\begin{align*} v_p(c_i^{\Gamma}) & \geq v_p(c_j \cdot f(\alpha, \beta, \gamma, \delta)) \\
 & \geq n_p(\a, \b) + (d+1)v_p(\Gamma).
 \end{align*}

 Another useful observation is the following. Suppose $\Gamma_p \in GL_2(\OCp)$. Then
 \begin{equation}\label{vpResEq}
 v_p( \rho(\a, \b)) = v_p(\rho(\a^{\Gamma_p}, \b^{\Gamma_p}))
 \end{equation}
and
\begin{equation}\label{npEq}
n_p(\a, \b) = n_p(\a^{\Gamma_p}, \b^{\Gamma_p}).
\end{equation}
These quickly follow from the above formulas when it is observed that being in $GL_2(\OCp)$ is equivalent to having $v_p(\textrm{det}(\Gamma_p)) = 0$ and $v_p(\Gamma_p) \geq 0$. Thus we see immediately (\ref{vpResEq}) follows from (\ref{vpResGrpAction}).
For (\ref{npEq}), $v_p(\Gamma_p) \geq 0$ implies $n_p (\a^{\Gamma_p}, \b^{\Gamma_p}) \geq n_p(\a,\b)$ by (\ref{npGrpAction}). But since $\Gamma_p ^{-1} \in GL_2(\OCp)$ as well, $n_p (\a, \b) \geq n_p (\a^{\Gamma_p}, \b^{\Gamma_p})$.

Notice also that (\ref{vpResEq}) and (\ref{npEq}) together imply:
\begin{align*}
N_{\Phi, p} &= N_{\Phi^{\Gamma},p}.
\end{align*}
The above formulas are some basic tools for understanding how the valuation of the resultant changes under the action of $K^\times$ and the actions of $PGL_2(K)$ and $GL_2(K)$ on presentations $[\a,\b]$ and models $(\a, \b)$ respectively. Let us collect them in the following proposition.
\begin{prop}
\label{valuationformulas}
Let $\varphi$ be a morphism, let $\Phi$ be a presentation of $\varphi$, and let $(\a,\b)$ be a model of $\Phi$. If $\Gamma \in \mathrm{GL}_2(K)$, then
\begin{quote}
{\bf (1)}  $v_p(\rho (\a^\Gamma, \b^\Gamma)) = v_p(\rho( \a , \b )) + (d^2 + d) v_p(\mathrm{det}(\Gamma))$\\
{\bf (2)}  $n_p(\a^\Gamma, \b^\Gamma)  \geq n_p(\a, \b) + (d+1)v_p(\Gamma)$\\
{\bf (3)}  If $\Gamma_p \in \mathrm{GL}_2(\mathcal{O}_{C,p})$ then $v_p( \rho(\a, \b)) = v_p(\rho(\a^{\Gamma_p}, \b^{\Gamma_p}))$, \\
{\bf (4)}  $n_p(\a, \b) = n_p(\a^{\Gamma_p}, \b^{\Gamma_p})$ and\\
{\bf (5)}  $N_{\Phi,p} = N_{\Phi^{\Gamma_p},p}$.
\end{quote}
\end{prop}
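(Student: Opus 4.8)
The plan is essentially to collect the work already done in the surrounding text into a single statement, since parts (1)--(5) are exactly the displayed formulas (\ref{vpResGrpAction}), (\ref{npGrpAction}), (\ref{vpResEq}), (\ref{npEq}) together with the observation about $N_{\Phi,p}$ that immediately follows them. So the proof is really an exercise in organizing, not in discovering anything new, and I would present it as such.

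First I would prove (1). Applying $v_p$ to the transformation law (\ref{ResGrpAction}) for the resultant form, $\rho(\a^\Gamma,\b^\Gamma) = (\det\Gamma)^{d^2+d}\rho(\a,\b)$, and using that $v_p$ is a valuation (so $v_p(uv) = v_p(u)+v_p(v)$ and $v_p(w^m) = m\,v_p(w)$), gives $v_p(\rho(\a^\Gamma,\b^\Gamma)) = v_p(\rho(\a,\b)) + (d^2+d)v_p(\det\Gamma)$. Next, for (2), I would recall that each coefficient $c_i^\Gamma$ of $(\a^\Gamma,\b^\Gamma)$ is, by the explicit description of the $\GL_2$ action on coefficients (substitution of $(X,Y)\cdot\Gamma$ followed by multiplication by $\Gamma^{\mathrm{adj}}$), a $\Z$-linear combination of products $c_j \cdot f(\alpha,\beta,\gamma,\delta)$ with $f$ homogeneous of degree $d+1$; hence $v_p(c_i^\Gamma) \ge \min_j v_p(c_j) + (d+1)\min(v_p(\alpha),v_p(\beta),v_p(\gamma),v_p(\delta)) = n_p(\a,\b) + (d+1)v_p(\Gamma)$, and taking the minimum over $i$ yields $n_p(\a^\Gamma,\b^\Gamma)\ge n_p(\a,\b)+(d+1)v_p(\Gamma)$.

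For (3) and (4), I would use the characterization that $\Gamma_p\in\GL_2(\OCp)$ iff $v_p(\Gamma_p)\ge 0$ and $v_p(\det\Gamma_p)=0$ (the latter because $\det\Gamma_p$ is a unit in $\OCp$). Then (3) is immediate from (1) with $v_p(\det\Gamma_p)=0$. For (4), (2) applied to $\Gamma_p$ gives $n_p(\a^{\Gamma_p},\b^{\Gamma_p})\ge n_p(\a,\b)$; since $\Gamma_p^{-1}\in\GL_2(\OCp)$ as well (its determinant is also a unit and its entries, being $\det(\Gamma_p)^{-1}$ times the adjugate entries, lie in $\OCp$), the same inequality run backwards gives $n_p(\a,\b)\ge n_p(\a^{\Gamma_p},\b^{\Gamma_p})$, hence equality. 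Finally (5): by definition $N_{\Phi,p} = v_p(\rho(\a,\b)) - 2d\, n_p(\a,\b)$ for any model, and by (3) and (4) both terms are unchanged when $(\a,\b)$ is replaced by $(\a^{\Gamma_p},\b^{\Gamma_p})$, which is a model of $\Phi^{\Gamma_p}$; since $N_{\Phi,p}$ is independent of the chosen model, $N_{\Phi,p} = N_{\Phi^{\Gamma_p},p}$.

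There is no real obstacle here: the only point requiring a little care is the bookkeeping in (2) — verifying from the definition of the coefficient action that every monomial appearing in $c_i^\Gamma$ is divisible by some $c_j$ and carries exactly degree $d+1$ in the entries of $\Gamma$ — but this is exactly the computation already sketched in the paragraph preceding the proposition, so I would simply refer to it rather than redo it. The proof can therefore be written in a few lines, essentially: ``(1) is (\ref{vpResGrpAction}); (2) is (\ref{npGrpAction}); (3)--(5) are (\ref{vpResEq}), (\ref{npEq}) and the displayed consequence, all established above.''
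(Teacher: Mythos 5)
Your proposal is correct and follows exactly the same route as the paper, whose "proof" of this proposition is precisely the preceding discussion: (1) from applying $v_p$ to (\ref{ResGrpAction}), (2) from the degree-$(d+1)$ coefficient bookkeeping, and (3)--(5) from specializing to $\Gamma_p \in \mathrm{GL}_2(\mathcal{O}_{C,p})$ and its inverse. Nothing is missing; the consolidation you describe is all that is required.
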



\subsection{Minimal Presentations and Models}
Is it possible to find a global model or presentation that realizes the minimal resultant at each point? If $C$ is affine, it is reasonable to look for a \textit{model} that does this; for example, in the case of a number field $K$, Silverman asks if there is a $\Gamma$ such that $\Phi^\Gamma$ can be written as $[\a',\b']$ where $(\a',\b')$ has coordinates in the ring of integers $\Ocal_K$ of the number field, and where \begin{align}
\label{NumberFieldMinimalModel}
R_{\varphi} = \underset{\mathfrak{p} \in \Specmax(\Ocal_K)}{\sum} v_p(\rho (\a,\b))[\mathfrak{p}].\end{align}
This is an analogy of a global minimal model of an elliptic curve over a number field. This is discussed in \cite{SilvermanEllipticCurves} (Chapter VIII, Section 8).  More generally, if $C$ is affine, such a definition makes sense. However, if $C$ is a complete nonsingular curve, if we could even find a presentation and a model that gave us the value of $R_{\varphi}$ for all points of $C$, it would follow that $R_{\varphi}$ is trivial, being an effective divisor that is also principal. Likewise, requiring the coefficients to be in the global sections of $C$ would imply triviality. So this definition is not very useful in this case. In regards to the number field situation, Silverman mentions a notion in \cite{SilvermanDynamics} (Exercise 4.46b) that will be useful, which we may formulate in our setting as follows.
\begin{definition} Let $U$ be an open subset of $C$ and let $S$ be the complement of $U$. An \textbf{$S$-minimal global model} of $\varphi$ is a model $(\a,\b)$ over $\OC(U)$ of a presentation $\Phi$ of $\varphi$ such that $(R_{\varphi})_p = v_p(\rho(\a,\b))$ for every $p$ in $U$.
\end{definition}

In addition to a minimal model, we might look for a minimal presentation.
\begin{definition} We say that a presentation $\Phi = [\a,\b]$ is a \textbf{minimal presentation} of $\varphi$ if: $$ R_{\varphi} = \underset{p \in C}{\sum} N_{\Phi, p}[p].$$
\end{definition}

In \cite{SilvermanDynamics}(Proposition 4.99), Silverman proves a necessary condition for having a minimal model in the sense of (\ref{NumberFieldMinimalModel}) for number fields. Here we show that this condition works in our setting, when instead we consider $S$-minimal models.
\begin{prop}
\label{wclass}
Let $\Phi \in \ratd(K)$ be a presentation of $\varphi$.
\begin{quote}
{\bf (1)} If $d$ is odd, then for each model $(\a, \b)$ of $\Phi$, there is a divisor $A_{(\a,\b)}$ satisfying,
$$R_{\varphi} = \textrm{div}(\rho(\a,\b)) + 2d A_{(\a, \b)}.$$
{\bf (2)} If $d$ is even, then for each model $(\a, \b)$ of $\Phi$, there is a divisor $A_{(\a,\b)}$  satisfying,
 $$R_{\varphi} = \textrm{div}(\rho(\a,\b)) + d A_{(\a, \b)}.$$
 \end{quote}
The image of this divisor in $Pic(C)$ is independent of both the chosen model, and of the chosen presentation $\Phi$. In the even case, the divisors associated to two different models of $\varphi$ differ by a principal divisor coming from the square of an element in $K$.
\end{prop}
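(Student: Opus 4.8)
The plan is to reduce everything to the resultant transformation formulas of Proposition~\ref{valuationformulas} together with the homogeneity of the resultant form $\rho$. Fix a model $(\a,\b)$ of $\Phi$; since $\Phi\in\ratd(K)$ we have $\rho(\a,\b)\in K^{\times}$, so $\div(\rho(\a,\b))$ is an honest principal divisor on $C$. The computation that drives the proof is the following: for any $\Gamma\in\GL_2(K)$, the pair $(\a^{\Gamma},\b^{\Gamma})$ is a model of $\Phi^{\Gamma}$, so by the definition of $N_{\Phi^{\Gamma},p}$ and Proposition~\ref{valuationformulas}(1),
\[
N_{\Phi^{\Gamma},p}-v_p(\rho(\a,\b))=(d^{2}+d)\,v_p(\det\Gamma)-2d\,n_p(\a^{\Gamma},\b^{\Gamma})
= d\bigl[(d+1)v_p(\det\Gamma)-2\,n_p(\a^{\Gamma},\b^{\Gamma})\bigr].
\]
Thus this integer is divisible by $d$ for every $\Gamma$ and every $p$; and when $d$ is odd the factor $d+1$ is even, so the bracket is even and the integer is in fact divisible by $2d$.

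The next step is to pass to the minimum over $\Gamma$. Because every $p$-model has coordinates in $\OCp$ and $\rho$ has integer coefficients, $N_{\Phi^{\Gamma},p}\ge 0$ for all $\Gamma$; hence the minimum $\epsilon_p(\varphi)=\min_{\Gamma}N_{\Phi^{\Gamma},p}$ (taken over $\PGL_2(K)$, lifting representatives to $\GL_2(K)$) is attained, say by some $\Gamma_p$, and $\epsilon_p(\varphi)-v_p(\rho(\a,\b))=N_{\Phi^{\Gamma_p},p}-v_p(\rho(\a,\b))$ is divisible by $d$ (resp.\ $2d$). Since $0\le \epsilon_p(\varphi)\le N_{\Phi,p}$ and the latter vanishes off the (finite) singular reduction locus of $\Phi$, both $R_{\varphi}$ and $\div(\rho(\a,\b))$ are finitely supported, so
\[
A_{(\a,\b)}:=\tfrac1d\bigl(R_{\varphi}-\div(\rho(\a,\b))\bigr)\quad\text{resp.}\quad A_{(\a,\b)}:=\tfrac1{2d}\bigl(R_{\varphi}-\div(\rho(\a,\b))\bigr)
\]
is a well-defined divisor on $C$, and it satisfies the asserted identity by construction.

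For the independence statements I would vary the two choices one at a time. Replacing $(\a,\b)$ by another model $(u\a,u\b)$ of the \emph{same} presentation ($u\in K^{\times}$) and using that $\rho$ is homogeneous of degree $2d$ gives $\div(\rho(u\a,u\b))=\div(\rho(\a,\b))+2d\,\div(u)$, hence $A_{(u\a,u\b)}=A_{(\a,\b)}-\div(u)$ in the odd case and $A_{(u\a,u\b)}=A_{(\a,\b)}-\div(u^{2})$ in the even case; in particular the difference is principal, and in the even case it is the divisor of a square, which is exactly the last assertion. Replacing $\Phi$ by $\Phi^{\Gamma}$ and taking $(\a^{\Gamma},\b^{\Gamma})$ as the model, formula~(\ref{ResGrpAction}) gives $\div(\rho(\a^{\Gamma},\b^{\Gamma}))=\div(\rho(\a,\b))+(d^{2}+d)\,\div(\det\Gamma)$, so $A_{(\a^{\Gamma},\b^{\Gamma})}-A_{(\a,\b)}$ equals $-\tfrac{d+1}{2}\div(\det\Gamma)$ in the odd case (note $d+1$ is even) and $-(d+1)\div(\det\Gamma)$ in the even case — a principal divisor in both. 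Combining the two, the class of $A_{(\a,\b)}$ in $\Pic(C)$ is unchanged by either choice.

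I do not expect a serious obstacle; the proof is essentially bookkeeping on top of Proposition~\ref{valuationformulas} and~(\ref{ResGrpAction}). The one place that needs a moment of care is the passage to the minimum: the divisibility by $d$ (resp.\ $2d$) holds term-by-term in $\Gamma$, so for it to survive in $\epsilon_p(\varphi)$ one needs the minimum to be realized by an actual $\Gamma_p$, which is guaranteed by the nonnegativity of $N_{\Phi^{\Gamma},p}$. The parity bookkeeping in the odd case — both in the divisibility step and in the $\det\Gamma$ term — should also be checked explicitly.
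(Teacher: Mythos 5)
Your proof is correct and follows essentially the same route as the paper's: you use Proposition~\ref{valuationformulas}(1) to write $\epsilon_p(\varphi)-v_p(\rho(\a,\b))=d\bigl[(d+1)v_p(\det\Gamma_p)-2n_p(\a^{\Gamma_p},\b^{\Gamma_p})\bigr]$, factor out $d$ (or $2d$ when $d$ is odd, since $d+1$ is then even), and verify invariance of the class under scalar change and the $\GL_2(K)$ action by the same bookkeeping; defining $A_{(\a,\b)}$ as $\tfrac1d$ (resp.\ $\tfrac1{2d}$) of $R_\varphi-\div(\rho(\a,\b))$ is just the paper's explicit pointwise formula in disguise. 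Your extra remark that the minimum defining $\epsilon_p(\varphi)$ is attained (by nonnegativity of $N_{\Phi^\Gamma,p}$) makes explicit a point the paper leaves implicit.
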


\begin{proof}
For each $p \in C$, there is some $\Gamma_p \in GL_2(K)$ such that coefficient of $R_{\varphi}$ at $p$ is given by:\begin{align*}v_p(\rho(\a^{\Gamma_p}, \b^{\Gamma_p}))- 2dn_p(\a^{\Gamma_p}, \b^{\Gamma_p}) &= v_p(\rho(\a, \b)) + (d^2 + d)v_p(\textrm{det}(\Gamma_p)) - 2dn_p(\a^{\Gamma_p}, \b^{\Gamma_p}).
 \end{align*}
In the odd case, we can factor $2d$ out of the terms on the right. In the even case, we can factor $d$ out. Thus in the odd case we simply define,
$$A_{(\a, \b)} = \underset{p}{\sum} [\frac{(d+1)}{2}v_p(det(\Gamma_p)) - n_p(\a^{\Gamma_p}, \b^{\Gamma_p})][p].$$
And in the even case we define,
$$A_{(\a, \b)} = \underset{p}{\sum} [(d+1)v_p(det(\Gamma_p)) -2 n_p(\a^{\Gamma_p}, \b^{\Gamma_p})][p].$$
Now recall that $R_{\varphi}$ is invariant under the group action. So keeping in mind how the valuation of the resultant changes with respect to the $\GL_2$ action (Proposition~\ref{valuationformulas}), and with respect to scalar multiplication, it is a calculation to see that the divisor class is invariant under both scalar multiplication and the $PGL_2(K)$ action. For the even case, let $M \in \mathrm{GL}_2(K)$. We have
\begin{align*}
\mathrm{div}(\rho(\a,\b)) + dA_{(\a, \b)} &= R_{\varphi} \\
&= \mathrm{div}(\rho(\a^M,\b^M)) + dA_{(\a^M, \b^M)} \\
&= \mathrm{div}(\rho(\a,\b)) + (d^2+d)\mathrm{det}(M) + dA_{(\a^M, \b^M)}.
\end{align*}
Then
\begin{align*}
A_{(\a, \b)} &= \mathrm{div}(\mathrm{det}(M)^{(d+1)})+ A_{(\a^M, \b^M)}.
\end{align*}
A similar calculation can be made for scalar multiplication of the coefficients and the odd case.
\end{proof}
\begin{cor}
Suppose $\varphi$ has an $S$-minimal global model. Then the image of $A_{(\a,\b)}$ in the restricted group $Pic(U)$ (where $U = C - S$) is trivial.
\end{cor}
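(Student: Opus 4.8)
The plan is to apply Proposition~\ref{wclass} to the $S$-minimal global model itself and then compare coefficients point by point on $U$. Write $(\a,\b)$ for the given $S$-minimal global model, of some presentation $\Phi$ of $\varphi$, with coefficients in $\OC(U)$, and let $A_{(\a,\b)}$ be the divisor attached to it by Proposition~\ref{wclass}, so that $R_{\varphi} = \div(\rho(\a,\b)) + d\,A_{(\a,\b)}$ when $d$ is even and $R_{\varphi} = \div(\rho(\a,\b)) + 2d\,A_{(\a,\b)}$ when $d$ is odd. Since the coefficients of the model lie in $\OC(U)$ and $\Phi \in \ratd(K)$, the function $\rho(\a,\b)$ is a nonzero element of $\OC(U)$; in particular $v_p(\rho(\a,\b))$ is a well-defined nonnegative integer for every $p \in U$, and the identity above exhibits $A_{(\a,\b)}$ as $\tfrac{1}{d}\bigl(R_{\varphi} - \div(\rho(\a,\b))\bigr)$ (resp. as $\tfrac{1}{2d}$ of the same difference).

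The one substantive step is to read off the coefficient of this difference at a closed point $p \in U$. The defining property of an $S$-minimal global model is precisely that $(R_{\varphi})_p = v_p(\rho(\a,\b))$ for every $p \in U$. Hence $\bigl(R_{\varphi} - \div(\rho(\a,\b))\bigr)_p = 0$, so $(A_{(\a,\b)})_p = 0$, for all $p \in U$. Thus $A_{(\a,\b)}$ is supported on $S = C - U$, its restriction to $U$ is the zero divisor, and therefore its image in $\Pic(U)$ is trivial.

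Finally, to get the statement for the divisor attached to an arbitrary model or presentation of $\varphi$, I would invoke the last assertions of Proposition~\ref{wclass}: in $\Pic(C)$, and so in $\Pic(U)$ after restriction, all these classes coincide (in the even case only up to the class of the square of an element of $K$, which is harmless once one such class is already trivial). Having produced a single model --- the $S$-minimal one --- whose associated divisor restricts to $0$ on $U$, we are done. I do not foresee a real obstacle here: the argument is essentially immediate from Proposition~\ref{wclass}, the only points needing a moment's care being that $\rho(\a,\b)$ is genuinely regular on $U$ (so the coefficient comparison there is legitimate) and that $\Pic(U)$ is taken to be divisors on $U$ modulo principal divisors, so that $\Pic(C) \to \Pic(U)$ is the evident restriction --- both automatic in our setting, where $C$ is one-dimensional and regular.
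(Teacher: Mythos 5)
Your argument is correct and is essentially the paper's own proof: apply Proposition~\ref{wclass} to the $S$-minimal global model, use the defining equality $(R_{\varphi})_p = v_p(\rho(\a,\b))$ on $U$ to see that the associated divisor restricts to zero in $\Div(U)$, and then transfer triviality in $\Pic(U)$ to an arbitrary model via the model-independence of the class. The only cosmetic difference is that you compare coefficients pointwise where the paper writes the single identity $R_{\varphi}|_U = \div(\rho(\a',\b'))|_U$; the content is the same.
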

\begin{proof}
By assumption there exists some $\Gamma$ that allows us to write
$$R_{\varphi}|_U = \div(\rho(\a',\b'))|_U$$
where $[\a', \b'] = [\a^{\Gamma}, \b^{\Gamma}]$. Then $A_{(\a', \b')}|_U = 0$ in $\mathrm{Div}(U)$.  So its class in $\mathrm{Pic}(U)$ is also 0.  However, this equals the image of $A_{(\a, \b)}|_U$ in $\mathrm{Pic}(U)$, by the above and basic properties of the restriction homomorphism on $\Div$ and $\Pic$.
\end{proof}

\begin{cor}
Suppose $d$ is odd and $U \subset C$ is open. Then if $(\a, \b)$ is a model of $\Phi$ and $A_{(\a,\b)}$ is trivial in $Pic(U)$, then there is $c \in K$ such that $v_p(\rho(c\a, c\b))=(R_{\varphi})_p$ for all $p \in U$. In particular, if $U = Spec(A)$ with $A$ a principal ideal domain, then such $c$ always exists.
\end{cor}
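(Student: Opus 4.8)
The plan is to read the result off from Proposition \ref{wclass}(1) together with the homogeneity of the resultant form under scaling of the coefficients.

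First I would invoke Proposition \ref{wclass}(1): since $d$ is odd, for the given model $(\a,\b)$ of $\Phi$ there is a divisor $A_{(\a,\b)}$ on $C$ with $R_\varphi = \div(\rho(\a,\b)) + 2d\,A_{(\a,\b)}$. Restricting to $U$, the hypothesis that $A_{(\a,\b)}$ is trivial in $\Pic(U)$ means exactly that $A_{(\a,\b)}|_U$ is a principal divisor on $U$; recall that for the curve $U$, whose local rings are all DVRs, one has $\Pic(U) \cong \Div(U)/(\text{principal divisors})$. Since $U$ is a nonempty open subset of the integral curve $C$, its function field is again $K$, so there is $c \in K^\times$ with $\div_U(c) = A_{(\a,\b)}|_U$.

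Next I would use that $\rho$ is homogeneous of degree $2d$ in the coefficients $a_0,\dots,a_d,b_0,\dots,b_d$, so $\rho(c\a,c\b) = c^{2d}\rho(\a,\b)$, hence $\div(\rho(c\a,c\b)) = 2d\,\div(c) + \div(\rho(\a,\b))$. Taking the coefficient at $p \in U$ and substituting $v_p(c) = (A_{(\a,\b)})_p$ gives
$$v_p(\rho(c\a,c\b)) = 2d\,(A_{(\a,\b)})_p + v_p(\rho(\a,\b)) = (R_\varphi)_p,$$
which is the desired equality. Note that $(c\a,c\b)$ is still a model of $\Phi$, being an affine representative of the same projective point; its coordinates need not lie in $\OC(U)$, but the statement does not require that. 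For the last sentence, when $U = \spec(A)$ with $A$ a principal ideal domain, $A$ is a Dedekind domain with trivial ideal class group, so $\Pic(U) \cong \mathrm{Cl}(A) = 0$; thus $A_{(\a,\b)}|_U$ is automatically trivial in $\Pic(U)$ and the first part applies.

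I do not anticipate a genuine obstacle; the only points to handle with care are that the function field of $U$ is $K$, so that $c$ can indeed be chosen in $K$, and that the relevant scaling exponent is $2d$ (matching the $2d$ in Proposition \ref{wclass}(1)) — which is why the argument runs cleanly in the odd case, whereas the even case would have to contend with the ``square of an element of $K$'' ambiguity noted in Proposition \ref{wclass}. In essence the corollary is just the combination of Proposition \ref{wclass}(1) with the identity $\rho(c\a,c\b) = c^{2d}\rho(\a,\b)$.
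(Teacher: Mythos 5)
Your proof is correct and is essentially the paper's own argument: triviality in $\Pic(U)$ gives $A_{(\a,\b)}|_U = \mathrm{div}(c)|_U$ with $c \in K$, and then Proposition~\ref{wclass}(1) combined with the homogeneity $\rho(c\a,c\b) = c^{2d}\rho(\a,\b)$ yields $v_p(\rho(c\a,c\b)) = (R_\varphi)_p$ on $U$. Your explicit remarks that the function field of $U$ is $K$ and that a PID has trivial Picard group are fine; the paper leaves these implicit.
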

\begin{proof}
 By assumption, we can write $A_{(\a, \b)}|_U = \div(c)|_U$, where $c \in K$. Then
 \begin{align*} R_{\varphi}|_U & = (\div(\rho(\a, \b)) + (2d)\div(c))|_U
 \\ &=\ \div(\rho(c\a, c\b)|_U.
 \end{align*}
\end{proof}
Notice that this is not saying that the resultant divisor of the presentation $[\a,\b]$ has the value of $R_{\varphi}$ for each $p \in U$. For that to happen, we would have to be able to evaluate $(c\a, c\b)$ at each $p \in U$. We don't know that we can do this, because some of these coefficients may have poles in $U$.


\subsection{Local Minimality Conditions}
Even locally, finding the value of the minimal resultant is difficult, since we have to take into consideration all conjugates of a given map. One thing we can say initially is the following.
\begin{prop}
Suppose $U = Spec(A)$ is a PID and $\varphi$ has a presentation $\Phi = [\a,\b]$ where the model $(\a, \b)$ is over $\OC(U)$, and the coefficients have no common factors (We can always do this in a PID). Let $p \in U$. Then if $v_p(\rho(\a, \b)) <  2d$, then $v_p(\rho(\a, \b)) = (R_{\varphi})_p$ at $p$.
\end{prop}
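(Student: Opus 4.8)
The plan is to prove the two inequalities $(R_\varphi)_p\le v_p(\rho(\a,\b))$ and $(R_\varphi)_p\ge v_p(\rho(\a,\b))$ separately. The first is the easy half: since $A$ is a PID and the coefficients $(\a,\b)$ have no common factor, no irreducible element of $A$ divides all of $c_0,\dots,c_{2d+1}$, so $n_p(\a,\b)=\min_j v_p(c_j)=0$ for every $p\in U$; hence $N_{\Phi,p}=v_p(\rho(\a,\b))$, and because $\epsilon_p(\varphi)$ is the minimum of $N_{\Phi^\Gamma,p}$ over all $\Gamma$ (the trivial conjugation included) we get $(R_\varphi)_p=\epsilon_p(\varphi)\le N_{\Phi,p}=v_p(\rho(\a,\b))$.

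For the reverse inequality I would show $N_{\Phi^\Gamma,p}\ge v_p(\rho(\a,\b))$ for every $\Gamma\in\GL_2(K)$. Scalars act trivially on presentations, so normalize $\Gamma$ to have entries in $\OCp$ with at least one of them a unit, i.e.\ $v_p(\Gamma)=0$, and set $e=v_p(\det\Gamma)\ge 0$. Smith normal form over the discrete valuation ring $\OCp$ writes $\Gamma=U_1\,D\,U_2$ with $U_1,U_2\in\GL_2(\OCp)$ and $D=\mathrm{diag}(1,\pi^e)$, $\pi$ a uniformizer of $\OCp$ (the first elementary divisor is a unit precisely because $v_p(\Gamma)=0$). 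By Proposition~\ref{valuationformulas}(5) the quantity $N_{\cdot,p}$ is unchanged upon composing with $U_2$, and by parts (3)--(4) passing from $\Phi$ to the presentation $\Phi^{U_1}$ changes neither $n_p$ of a $p$-model nor the valuation of its resultant; so it suffices to treat $\Gamma=D=\mathrm{diag}(1,\pi^e)$. A direct computation from the definition of the action gives $a_i^{D}=\pi^{e(i+1)}a_i$ and $b_i^{D}=\pi^{ei}b_i$, hence $v_p(\rho(\a^{D},\b^{D}))=v_p(\rho(\a,\b))+d(d+1)e$ (consistently with Proposition~\ref{valuationformulas}(1)) and $n_p(\a^{D},\b^{D})=\min\bigl(\min_i(e(i+1)+v_p(a_i)),\ \min_i(ei+v_p(b_i))\bigr)$. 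Therefore $N_{\Phi^{D},p}=v_p(\rho(\a,\b))+d(d+1)e-2d\,n_p(\a^{D},\b^{D})$, and the whole statement reduces to the estimate $n_p(\a^{D},\b^{D})\le\tfrac{(d+1)e}{2}$.

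This is exactly where the hypothesis $v_p(\rho(\a,\b))<2d$ enters. Since the resultant divisor of any presentation is effective, $N_{\Phi^{D},p}\ge 0$, and the formula above gives $n_p(\a^{D},\b^{D})\le\frac{v_p(\rho(\a,\b))+d(d+1)e}{2d}<1+\tfrac{(d+1)e}{2}$. As $n_p(\a^{D},\b^{D})$ is an integer, this forces $n_p(\a^{D},\b^{D})\le\tfrac{(d+1)e}{2}$ whenever $(d+1)e$ is even, in particular for all $e$ when $d$ is odd, which finishes the proof in the odd case. (For $d$ odd one can also argue directly, combining $0\le (R_\varphi)_p\le v_p(\rho(\a,\b))<2d$ with Proposition~\ref{wclass}, which gives $(R_\varphi)_p\equiv v_p(\rho(\a,\b))\pmod{2d}$.) The case $d$ even, $e$ odd is where I expect the real work: then $(d+1)e$ is odd, the integrality bound only yields $n_p(\a^{D},\b^{D})\le\tfrac{(d+1)e+1}{2}$, and one is left to exclude the single possibility $N_{\Phi^{D},p}=v_p(\rho(\a,\b))-d$. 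To do this I would analyze the reduction $\Phi_p$: equality in the extremal bound forces $\overline{a_i}=0$ for $i\le\frac{d-1}{2}$ and $\overline{b_j}=0$ for $j\le\frac{d+1}{2}$, so $\Phi_p$ is unstable in the sense of Theorem~\ref{numericalcriterion}; one then has to contradict $v_p(\rho(\a,\b))<2d$ using the description of instability by a high-order common zero of $\overline{F_\a},\overline{F_\b}$ (Theorem~\ref{enhancednumericalcriterion}) together with a lower bound on $v_p(\rho(\a,\b))$ in terms of the order of that common zero. Establishing this last estimate cleanly — and, should the bound it produces be too weak, sharpening the hypothesis in the even case — is the main obstacle.
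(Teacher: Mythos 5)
Your argument is correct for $d$ odd, but it takes a genuinely different route from the paper's. The paper argues globally: since $U=\mathrm{Spec}(A)$ with $A$ a PID, the divisor $A_{(\a,\b)}$ of Proposition~\ref{wclass} is principal on $U$, so (as in the corollary preceding the statement, which is stated for $d$ odd) one writes $R_\varphi|_U=\mathrm{div}(\rho(c\a,c\b))|_U$ for some $c\in K$; the no-common-factor hypothesis gives $N_{\Phi,p}=v_p(\rho(\a,\b))$, and then $v_p(c)>0$ contradicts minimality of $R_\varphi$ while $v_p(c)<0$ contradicts effectiveness, using $v_p(\rho(\a,\b))<2d$. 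You instead work locally at $p$: after normalizing $\Gamma$ and splitting off $\mathrm{GL}_2(\mathcal{O}_{C,p})$ factors by Smith normal form (legitimate, since by Proposition~\ref{valuationformulas} (3)--(5) those factors change neither $n_p$ nor $v_p(\rho)$ of the relevant $p$-models, and your diagonal argument only uses $n_p=0$ and $v_p(\rho)<2d$, which the intermediate presentation still satisfies), you reduce to $\Gamma=\mathrm{diag}(1,\pi^e)$ and finish by effectiveness plus integrality. Both proofs ultimately exploit the same fact, that $(R_\varphi)_p$ and $v_p(\rho(\a,\b))$ differ by a multiple of $2d$ when $d$ is odd; your local computation buys independence from the global Pic argument, while the paper's route is shorter once Proposition~\ref{wclass} is in place.

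The obstacle you isolate in the even case is real, and it cannot be overcome: with the stated bound $2d$ the proposition fails for even $d$, because the drop by exactly $d$ that you could not exclude actually occurs. For $d=2$, take $K=k(t)$, $U=\mathrm{Spec}(k[t])$, $F_{\a}=tX^2+2XY+Y^2$, $F_{\b}=t^2X^2+tXY+tY^2$. The coefficients have no common factor and $\rho(\a,\b)=t^3$, so $v_0(\rho(\a,\b))=3<2d=4$; yet conjugating by $\mathrm{diag}(1,t)$ gives the presentation $[X^2+2XY+tY^2\,:\,X^2+XY+tY^2]$, whose resultant is $t$, so $(R_\varphi)_0\le 1<3$. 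This is consistent with the paper: its proof quotes the corollary that is stated only for $d$ odd, and in the even case Proposition~\ref{wclass} only gives a congruence modulo $d$, so the same argument (and the true statement) requires $v_p(\rho(\a,\b))<d$ there. Your closing instinct---complete the odd case and sharpen the hypothesis in the even case---is exactly the right repair; with the bound $v_p(\rho(\a,\b))<d$ your own integrality estimate also closes the even case, since then $n_p(\a^{D},\b^{D})<\tfrac12+\tfrac{(d+1)e}{2}$ forces $N_{\Phi^{D},p}\ge v_p(\rho(\a,\b))$.
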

\begin{proof}
Just write $R_{\varphi}|_U = div(\rho(c\a,c\b))$ as in the above proof.  The conditions on the coefficients imply that $N_{\Phi,p} = v_p(\rho(\a,\b))$. Thus if $v_p(c) > 0$, we contradict the minimality of $R_{\varphi}$, and if $v_p(c) < 0$, we contradict the fact that $R_{\varphi}$ is an effective divisor.
\end{proof}

Let $p \in C$. Any $[\Gamma] \in PGL_2(K)$ is a presentation of an associated degree $1$ map, and we may always take our $\Gamma \in GL_2(K)$ to be $p$-model of $[\Gamma]$. This is the same as requiring that $v_p(\Gamma) = 0$, as defined above. Under this assumption, the valuation at $p$ of the determinant of $[\Gamma]$ is well defined. In fact, the determinant is just the degree $1$ case of the resultant, and so this is just the value at $p$ of the resultant divisor of the presentation $[\Gamma]$ of the associated degree one map. We will denote this number by $v_p(\mathrm{det}[\Gamma])$.
\begin{prop}
\label{minimalitycondition}
Let $p \in C$. The presentation $\Phi$ with $p$-model $(\a,\b)$ realizes the minimal resultant at $p$ if and only if for every $[\Gamma] \in PGL_2(K)$ such that $v_p(\mathrm{det}[\Gamma])) > 0$, we have 
\begin{align}\label{minimalityconditionineq}
\frac{n_p(\a^{\Gamma}, \b^{\Gamma})}{v_p(\mathrm{det}[\Gamma])} \leq \frac{d+1}{2}
\end{align}
where $(\a, \b)$ is a $p$-model of $\Phi$.
\end{prop}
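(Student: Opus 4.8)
The whole proof rests on one identity. Fix a $p$-model $(\a,\b)$ of $\Phi$ and, for $[\Gamma]\in PGL_2(K)$, take the representative $\Gamma\in\GL_2(K)$ to be a $p$-model, i.e. $v_p(\Gamma)=0$, so that $v_p(\det\Gamma)=v_p(\det[\Gamma])\ge 0$. First I would note that $N_{\Phi^{\Gamma},p}$ may be computed from the model $(\a^{\Gamma},\b^{\Gamma})$ of $\Phi^{\Gamma}$, so combining the definition of $N$ with Proposition \ref{valuationformulas}(1) and the fact that $n_p(\a,\b)=0$ for a $p$-model yields
$$N_{\Phi^{\Gamma},p}-N_{\Phi,p}=(d^2+d)\,v_p(\det\Gamma)-2d\,n_p(\a^{\Gamma},\b^{\Gamma}).$$
A one-line check shows the right-hand side is unchanged if $\Gamma$ is rescaled by $\lambda\in K^{\times}$: then $\det\Gamma$ is multiplied by $\lambda^2$ and $(\a^{\Gamma},\b^{\Gamma})$ by $\lambda^{d+1}$ (the conjugation action on models is homogeneous of degree $d+1$ in the entries of $\Gamma$), and $2(d^2+d)=2d(d+1)$. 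Hence once $\Gamma$ is normalized to a $p$-model, both $v_p(\det[\Gamma])$ and $n_p(\a^{\Gamma},\b^{\Gamma})$ depend only on $[\Gamma]$, and the displayed difference is intrinsic.

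Next I would unwind "realizes the minimal resultant at $p$": by Definition \ref{minimalresultant} this means $N_{\Phi,p}=\epsilon_p(\varphi)=\min_{[\Gamma]\in PGL_2(K)}N_{\Phi^{\Gamma},p}$, and since $[\Gamma]=1$ attains $N_{\Phi,p}$ in the minimum, it is equivalent to $N_{\Phi,p}\le N_{\Phi^{\Gamma},p}$ for every $[\Gamma]\in PGL_2(K)$. By the identity, for a fixed $[\Gamma]$ this inequality is equivalent to $2d\,n_p(\a^{\Gamma},\b^{\Gamma})\le(d^2+d)\,v_p(\det[\Gamma])$, i.e. to $n_p(\a^{\Gamma},\b^{\Gamma})\le\tfrac{d+1}{2}\,v_p(\det[\Gamma])$.

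It remains to separate the two cases according to $v_p(\det[\Gamma])$. If $v_p(\det[\Gamma])=0$, then the $p$-model $\Gamma$ lies in $\GL_2(\OCp)$, so Proposition \ref{valuationformulas}(5) gives $N_{\Phi^{\Gamma},p}=N_{\Phi,p}$, equivalently $n_p(\a^{\Gamma},\b^{\Gamma})=n_p(\a,\b)=0$; such $[\Gamma]$ never violate minimality and impose no condition. Since $v_p(\det[\Gamma])\ge 0$ for every $p$-model $\Gamma$, the only $[\Gamma]$ that can matter are those with $v_p(\det[\Gamma])>0$, and for these the condition of the previous paragraph is exactly $\dfrac{n_p(\a^{\Gamma},\b^{\Gamma})}{v_p(\det[\Gamma])}\le\dfrac{d+1}{2}$. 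Combining both directions: $\Phi$ realizes the minimal resultant at $p$ iff this inequality holds for all $[\Gamma]$ with $v_p(\det[\Gamma])>0$, which is the assertion.

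I do not anticipate a genuine obstacle; the argument is bookkeeping with Proposition \ref{valuationformulas}. The only delicate point, worth isolating at the outset, is well-definedness: the numerator and denominator of the stated ratio make sense only after normalizing $\Gamma$ to a $p$-model, so I would pin down that normalization and verify the scale-invariance of $N_{\Phi^{\Gamma},p}-N_{\Phi,p}$ before anything else.
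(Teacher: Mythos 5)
Your argument is correct and is essentially the paper's own proof: both rest on the identity $N_{\Phi^{\Gamma},p}-N_{\Phi,p}=(d^2+d)\,v_p(\mathrm{det}[\Gamma])-2d\,n_p(\a^{\Gamma},\b^{\Gamma})$ coming from Proposition \ref{valuationformulas}(1) together with $n_p(\a,\b)=0$ for a $p$-model, followed by cancelling and dividing. The only difference is that you spell out the scale-invariance/well-definedness of the ratio and dispose of the $v_p(\mathrm{det}[\Gamma])=0$ case via Proposition \ref{valuationformulas}(4)--(5), points the paper leaves implicit.
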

\begin{proof}
The assumption that $(\a,\b)$ is a $p$-model means $n_p(\a,\b) = 0$. Thus, recalling the definition of $R_{[\a,\b]}$ we have that
\begin{align*}
(R_{[\a,\b]})_p \leq (R_{[\a^{\Gamma}, \b^{\Gamma}]})_p \\
& \iff \\
& v_p(\rho(\a,\b))  \leq v_p(\rho(\a, \b)) + (d^2+d)v_p(\mathrm{det}[\Gamma]) - 2dn_p(\a^{\Gamma}, \b^{\Gamma})
\end{align*}
and the statement follows by canceling and dividing.
\end{proof}
To check the condition (\ref{minimalityconditionineq}), it may be helpful to know that it suffices to check it for only certain sorts of conjugates. In the function field case, we have the following:
\begin{prop}
\label{neededcases}
Let $K = k(C)$ where $C$ is a nonsingular projective curve over $k$. To check minimality at $p$, it suffices to check that the condition of Proposition~\ref{minimalitycondition} holds for $[\Gamma] \in PGL_2(K)$ where $\Gamma = \left(\begin{array}{cc}\alpha & \beta \\\gamma & \delta\end{array}\right)$ is a $p$-model of one of the following $3$ forms:
\begin{quote}
{\bf (1)}  $\alpha(p), \beta(p) = 0; \gamma(p), \delta(p) \neq 0$\\
{\bf (2)}  $\alpha(p), \beta(p) \neq 0; \gamma(p), \delta(p) = 0$\\
{\bf (3)}  $\alpha(p), \beta(p), \gamma(p), \delta(p) \neq 0.$
\end{quote}
\end{prop}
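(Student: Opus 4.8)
The plan is to reduce an arbitrary conjugating element $[\Gamma]\in PGL_2(K)$ to one of the three listed normal forms by absorbing a suitable element of $GL_2(\OCp)$ on the right, using the invariance established in Proposition~\ref{valuationformulas}. The point is that the quantities appearing in the minimality inequality (\ref{minimalityconditionineq}), namely $n_p(\a^\Gamma,\b^\Gamma)$ and $v_p(\det[\Gamma])$, are unchanged if we replace $\Gamma$ by $\Gamma\Gamma_p$ with $\Gamma_p\in GL_2(\OCp)$: the determinant valuation is unchanged since $v_p(\det\Gamma_p)=0$, and $n_p(\a^{\Gamma\Gamma_p},\b^{\Gamma\Gamma_p}) = n_p((\a^\Gamma)^{\Gamma_p},(\b^\Gamma)^{\Gamma_p}) = n_p(\a^\Gamma,\b^\Gamma)$ by parts (4) of Proposition~\ref{valuationformulas} applied to the model $(\a^\Gamma,\b^\Gamma)$. (One must check the cocycle compatibility $(\,\cdot\,)^{\Gamma\Gamma_p} = ((\,\cdot\,)^\Gamma)^{\Gamma_p}$, which holds because the action described in Section~2.2 is a genuine right group action on $\mathbb{A}^{2d+2}$.) So it suffices to show every class $[\Gamma]\in PGL_2(K)$ with $v_p(\det[\Gamma])>0$ has a $p$-model representative of the form $\Gamma' = \Gamma\Gamma_p$ with $\Gamma_p\in GL_2(\OCp)$ that falls into one of the three cases — equivalently, that the right coset $\Gamma\cdot GL_2(\OCp)$ contains such a representative.

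The key step is a Smith-normal-form / elementary-divisor argument over the DVR $\OCp$. Scale $\Gamma$ by a power of a uniformizer so that $\Gamma\in M_2(\OCp)$ with $v_p(\Gamma)=0$ (a $p$-model); since $v_p(\det\Gamma)>0$, the reduction $\Gamma(p)\in M_2(\kappa(p))$ is a nonzero singular matrix, hence has rank exactly $1$. A rank-one matrix over a field can, by row operations, be brought to a matrix with a single nonzero row; lifting the row operations to $GL_2(\OCp)$ and acting on the \emph{left} realizes this. But Proposition~\ref{minimalitycondition}'s inequality only lets us absorb things on the right. So instead I use: a rank-one matrix over $\kappa(p)$ has a one-dimensional kernel, i.e. $\Gamma(p)$ has a nonzero column-null-vector, and by column operations over $\kappa(p)$ (lift to $GL_2(\OCp)$, act on the right) we can arrange that $\Gamma(p)$ has a zero column — that is, after right multiplication by a suitable $\Gamma_p\in GL_2(\OCp)$, the matrix $\Gamma\Gamma_p$ reduces mod $p$ to a matrix with one column zero. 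Such a matrix is, up to swapping the two columns (another right multiplication by a permutation matrix in $GL_2(\OCp)$), exactly of the shape where one column reduces to zero and the other does not. Writing out the four entries $\alpha,\beta,\gamma,\delta$ of $\Gamma\Gamma_p$: the nonzero column contributes two entries not both vanishing at $p$, and the zero column contributes two entries both vanishing at $p$. Depending on which of the two entries in the nonzero column is actually a unit (at least one is, since $v_p=0$), we land in case (1), case (2), or — if both entries of the nonzero column are units — case (3); and if the "zero" column happens after all these reductions to contain a unit we'd be in a rank-two situation, contradicting $v_p(\det)>0$. A short bookkeeping check of which entries can be zero pins down that the three listed patterns exhaust the possibilities.

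The main obstacle, and the step I expect to require the most care, is the bookkeeping just described: matching the three cases of the proposition precisely to the possible reduction patterns of a norm-one $2\times2$ matrix over $\OCp$ with positive determinant valuation, \emph{and} making sure the reduction to those patterns is achieved purely by right multiplication by $GL_2(\OCp)$ (so that Proposition~\ref{valuationformulas}(3)–(4) applies and the minimality inequality is genuinely preserved). One subtlety to watch: cases (1) and (2) as stated have a column entirely vanishing at $p$ \emph{and} a column with both entries nonzero at $p$, whereas the generic rank-one reduction gives a nonzero column with possibly only one unit entry — so one may need an extra right multiplication by an upper- or lower-triangular unipotent element of $GL_2(\OCp)$ to clear or fill the appropriate entry, and I should confirm such a move does not disturb the already-arranged zero column. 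Once the case analysis is complete, invoking Proposition~\ref{minimalitycondition} for these representatives finishes the argument, since checking (\ref{minimalityconditionineq}) for $\Gamma\Gamma_p$ is equivalent to checking it for $\Gamma$.
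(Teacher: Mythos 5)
Your reduction framework is fine and matches the paper's: absorbing a right factor $\Gamma_p\in \GL_2(\OCp)$ leaves both $v_p(\det)$ and $n_p$ of the conjugated model unchanged by Proposition~\ref{valuationformulas} (4)--(5), so the inequality (\ref{minimalityconditionineq}) need only be checked on right-coset representatives (the paper uses the subgroup $\GL_2(k)\subset\GL_2(\OCp)$, since the explicit matrices it needs are constant). The genuine gap is in your final case-matching. After your Smith-normal-form step you hold a $p$-model $\Gamma\Gamma_p$ whose reduction mod $p$ has a zero \emph{column}; but no matrix of that shape is of the forms (1), (2) or (3). Forms (1) and (2) require an entire \emph{row} to vanish at $p$ with \emph{both} entries of the other row nonzero at $p$, and form (3) has no vanishing entries at all; each of these is incompatible with a vanishing column, since a vanishing column would force one entry of the alleged unit row to vanish. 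So the claim that, depending on which entries of the nonzero column are units, you ``land in case (1), case (2), or case (3)'' is false as stated, and the ``subtlety to watch'' you flag at the end is exactly the unproved heart of the argument, not a cosmetic adjustment.

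The repair is what the paper actually does: a vanishing column is not the useful normal form, and the right move is to add the non-vanishing column into the vanishing one, i.e.\ multiply on the right by a unipotent matrix such as $\left(\begin{array}{cc}1 & 0\\ 1 & 1\end{array}\right)$ or $\left(\begin{array}{cc}1 & 1\\ 0 & 1\end{array}\right)$ (these lie in $\GL_2(k)$), which necessarily destroys the zero column you arranged. Concretely, if the reduction is $\left(\begin{array}{cc}0 & u_1\\ 0 & u_2\end{array}\right)$ with $u_1,u_2\neq 0$, this column addition produces form (3); if it is $\left(\begin{array}{cc}0 & u\\ 0 & 0\end{array}\right)$ or $\left(\begin{array}{cc}0 & 0\\ 0 & u\end{array}\right)$, it produces form (2) or form (1) respectively (and symmetrically when the zero column is the second one). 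In other words, the detour through a zero column buys you nothing: one still has to run the vanishing-pattern case analysis, which is precisely the paper's proof --- it enumerates the possible patterns with $v_p(\det\Gamma)>0$ and reduces each, by these explicit right multiplications, to the three row-based patterns of the statement. As written, your proposal stops short of this step, so the proof is incomplete.
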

\begin{proof}
First note that by Proposition~\ref{valuationformulas} (5), any conjugation by a matrix in $\mathrm{GL}_2(k)$ will have no affect on the value of $R_{[\a,\b]}$. Thus if $M \in \mathrm{GL}_2(k)$, we have
$$(R_{[\a^{\Gamma},\b^{\Gamma}]})_p = (R_{[\a^{\Gamma \cdot M},\b^{\Gamma \cdot M]}})_p.$$
So it is no loss of generality to multiply the $\Gamma$ we start with \textit{on the right} by such a matrix. By the same proposition, we need only consider conjugations for which
$$v_p(\mathrm{det}(\Gamma)) > 0.$$
The possible configurations for the vanishing of $\alpha, \beta, \gamma, \delta$ that yield this are
$$A=\left(\begin{array}{cc} &  \\ & \end{array}\right),\
B=\left(\begin{array}{cc} & * \\ * & *\end{array}\right),\
C=\left(\begin{array}{cc} * & \\ * & *\end{array}\right),\
D=\left(\begin{array}{cc} * & * \\ * & \end{array}\right),\
E=\left(\begin{array}{cc} * & * \\ & *\end{array}\right),$$
$$F=\left(\begin{array}{cc} * & \\ * &\end{array}\right),\
H=\left(\begin{array}{cc} & * \\ & *\end{array}\right),\
J=\left(\begin{array}{cc} * & * \\ &\end{array}\right),\ \textrm{and }
L=\left(\begin{array}{cc} & \\ * & *\end{array}\right)$$
where a $*$ indicates that the coefficient has positive valuation.  Then since,
$$\left(\begin{array}{cc}\alpha & \beta \\
\gamma & \delta \end{array}\right) \left(\begin{array}{cc}1 & 1 \\
0 & 1 \end{array}\right) = \left(\begin{array}{cc}\alpha & \alpha + \beta \\
\gamma & \gamma + \delta\end{array}\right),$$
we see by the basic addition inequalities of valuations that we can reduce the case of $B$ into case of $L$, the case of $E$ into the case of $J$, and the case of $H$ into the case of $A$. Likewise, we have that
$$\left(\begin{array}{cc}\alpha & \beta \\
\gamma & \delta\end{array}\right) \left(\begin{array}{cc}1 & 0 \\
1 & 1\end{array}\right) = \left(\begin{array}{cc}\alpha + \beta & \beta \\
\gamma + \delta & \delta\end{array}\right).$$
This reduces the case of $C$ to $L$, $D$ to $J$, and $F$ to $A$. Thus only the cases of $A$, $J$, and $L$ remain.
\end{proof}

To reduce the forms in cases $J$ and $L$ to the form in case $A$, we could multiply on the left by some $M$ in an analogous way as above. However since we are acting first by $M$ instead of by $\Gamma$, it is not immediate as before that $(R_{[\a^{\Gamma},\b^{\Gamma}]})_p = (R_{[\a^{M \cdot \Gamma},\b^{M \cdot \Gamma}]})_p$.

In Section 4, we will prove another minimality criterion having to do with the symmetric functions of the multipliers of periodic points.


\section{Semi-stability}

\subsection{Semi-stable Presentations}
One of our initial inspirations, to formulate a dynamical analog to Theorem~\ref{szpirostheorem}, comes from the natural map
$$C \rightarrow \PP^{2d+1}$$
used in \cite{PST}, and mentioned above, that arises from a given morphism $\varphi:\PP^1_K\rightarrow\PP^1_K$  and a presentation $\Phi$. Let us outline precisely how this map is obtained. A presentation of a morphism $\Phi$ is a $K$-valued point of the scheme $\Ratd$, i.e. it is a morphism $K \rightarrow \Ratd$. Composing with the inclusion into projective space, we get a map $K \rightarrow \PP^{2d+1}$. Let $U$ be any affine open subset of $C$. Then the valuative criterion for properness tells us that this map now extends uniquely to give us a map $U\rightarrow\PP^{2d+1}$. By the uniqueness given by the valuative criteria, these maps must agree on intersections, and so we get a unique map $C\rightarrow\PP^{2d+1}$.

If this map lands in the semi-stable locus $(\PP^{2d+1})^{ss}$, then we say that the presentation $\Phi$ is a semi-stable presentation:
\begin{definition}
Let $\Phi$ be a presentation of $\varphi$. If the morphism $C\rightarrow\PP^{2d+1}$ factors through $(\PP^{2d+1})^{ss}$, then $\Phi$ is a \textbf{semi-stable presentation} of $\varphi$.
\end{definition}
If $\Phi$ is a semi-stable presentation, then for all $p \in C$, the reduction of $\Phi$ at $p$ is in $(\PP^{2d+1})^{ss}(\kappa(p))$. The following theorem says that if $\Phi$ is a semi-stable presentation of $\varphi$, then the locus of non-singular points for $\Phi$ is equal to the locus of points of good reduction of $\varphi$.
\begin{thm}\label{reductionlocus}
Let $\varphi:\PP^1_K\rightarrow\PP^1_K$ be a morphism with degree $d>1$ and $\mathfrak{f}_\varphi$ be the conductor of $\varphi$.  Let $\Ufrak_\varphi=C\backslash \mathfrak{f}_\varphi$ be the set of points of $C$ where $\varphi$ has good reduction and $\Phi=[\a,\b]$ be a semi-stable presentation of $\varphi$.  Let $U_\Phi$ be the open set of $C$ such that $\Phi$ has non-singular reduction.  Then $U_\Phi=\Ufrak_\varphi$.
\end{thm}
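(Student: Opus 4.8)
The plan is to prove the two inclusions $U_\Phi \subseteq \Ufrak_\varphi$ and $\Ufrak_\varphi \subseteq U_\Phi$ separately. The first inclusion is essentially formal: if $\Phi$ has non-singular reduction at $p$, then $N_{\Phi,p} = 0$, so $\epsilon_p(\varphi) = 0$ (since $\epsilon_p(\varphi)$ is a minimum of nonnegative quantities and is $\leq N_{\Phi,p}$), hence $p \notin \mathrm{Support}(R_\varphi)$, i.e. $p \in \Ufrak_\varphi$. This direction uses only the definition of $R_\varphi$ and $\mathfrak{f}_\varphi$ and does not need semi-stability.

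The substantive direction is $\Ufrak_\varphi \subseteq U_\Phi$: if $\varphi$ has good reduction at $p$ (meaning $\epsilon_p(\varphi) = 0$), we must show that the \emph{given} semi-stable presentation $\Phi$ already has non-singular reduction at $p$, i.e. $N_{\Phi,p} = 0$. Equivalently, by contraposition, if $N_{\Phi,p} > 0$ then no $PGL_2(K)$-conjugate of $\Phi$ can have $N_{\Phi^\Gamma,p} = 0$. Suppose for contradiction that $N_{\Phi,p} > 0$ but there is $\Gamma \in PGL_2(K)$ with $N_{\Phi^\Gamma, p} = 0$. Take $p$-models on both sides; by Proposition~\ref{valuationformulas}, passing from $(\a,\b)$ to $(\a^\Gamma,\b^\Gamma)$ the resultant valuation changes by $(d^2+d)v_p(\det[\Gamma])$ and we subtract $2d\,n_p(\a^\Gamma,\b^\Gamma)$, so $N_{\Phi^\Gamma,p} = 0$ with $N_{\Phi,p}>0$ forces $v_p(\det[\Gamma]) > 0$ and in fact $n_p(\a^\Gamma,\b^\Gamma) = \tfrac{(d+1)}{2}v_p(\det[\Gamma]) + \tfrac{1}{2d}N_{\Phi,p} > \tfrac{d+1}{2}v_p(\det[\Gamma])$. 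The key point is then to interpret this strict inequality geometrically via Theorem~\ref{enhancednumericalcriterion} (or Theorem~\ref{numericalcriterion}): a large value of $n_p$ relative to $v_p(\det[\Gamma])$ means the reduction $\Phi_p$, after conjugating by the reduction of $\Gamma$, has $F_\a$ and $F_\b$ sharing a common root of high order — high enough to force $\Phi_p$ to be unstable. This contradicts the hypothesis that $\Phi$ is a semi-stable presentation, which guarantees $\Phi_p \in (\PP^{2d+1})^{ss}(\kappa(p))$ for every $p$.

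The main obstacle is making the ``large $n_p$ forces instability of $\Phi_p$'' step precise over the residue field $\kappa(p)$, which is not algebraically closed. Concretely, one works with a $p$-model $(\a,\b)$ of $\Phi$ and factors $\Gamma$ (or rather its action) through its Smith/Cartan-type normal form over $\OCp$: up to $GL_2(\OCp)$ on both sides (which by Proposition~\ref{valuationformulas}(3)--(5) changes nothing), $\Gamma$ is diagonal $\mathrm{diag}(1,\pi^e)$ with $e = v_p(\det[\Gamma]) > 0$, $\pi$ a uniformizer. One then computes $n_p(\a^\Gamma,\b^\Gamma)$ explicitly for this diagonal conjugation in terms of the orders of vanishing of $F_\a, F_\b$ at the points $[1,0]$ and $[0,1]$ of $\PP^1_{\kappa(p)}$, finds that $n_p(\a^\Gamma,\b^\Gamma) > \tfrac{d+1}{2}e$ translates into: the reduced polynomials $\bar F_\a, \bar F_\b$ have a common zero at (say) $[1,0]$ of order exceeding $\tfrac{d-1}{2}$ (the even case) or the analogous threshold, and when the order is exactly the threshold, $[1,0]$ is moreover a fixed point of the cancelled-down map — which is exactly condition (1)(a)/(b) (or (2)(a)/(b)) of Theorem~\ref{enhancednumericalcriterion} for $\Phi_p$ to be unstable. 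Since instability is detected after passing to $\overline{\kappa(p)}$, one checks the order-of-vanishing conditions survive base change, which they do. This yields the contradiction and completes the proof. The bookkeeping in the odd versus even case (the asymmetric roles of the thresholds $\tfrac{d-1}{2}$ and $\tfrac{d+1}{2}$ in Theorem~\ref{numericalcriterion}) is the part requiring the most care, but it is a finite computation once the diagonal reduction is in place.
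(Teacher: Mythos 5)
Your proposal is correct, but it takes a genuinely different route from the paper. The paper argues globally and moduli-theoretically: it extends the map $F_\Phi$ to $C\rightarrow(\PP^{2d+1})^{ss}$ and composes with the quotient $\pi$ to get $f:C\rightarrow(\Mcal_d)^{ss}$; separately, it glues the maps $U_{(\a,\b)_i}\rightarrow\Rat_d\rightarrow\Mcal_d$ over the good-reduction locus (using the arguments of \cite{PST}, which forces a harmless finite base change $K'/K$), extends the result to $f':C\rightarrow(\Mcal_d)^{ss}$ by the valuative criterion and projectivity of $(\Mcal_d)^{ss}$, shows $f=f'$, and derives the equality $U_\Phi=\Ufrak_\varphi$ from the fact that $\pi$ sends semi-stable points outside $\Rat_d$ into $(\Mcal_d)^{ss}\backslash\Mcal_d$. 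Your argument is instead local and computational, and it does work: the formal inclusion $U_\Phi\subseteq\Ufrak_\varphi$ is fine, and for the converse your strict inequality $n_p(\a^\Gamma,\b^\Gamma)>\tfrac{d+1}{2}v_p(\det[\Gamma])$ (forced by $N_{\Phi^\Gamma,p}=0$ and $N_{\Phi,p}>0$), after the Cartan/Smith reduction of $\Gamma$ to $\mathrm{diag}(1,\pi^e)$ via Proposition \ref{valuationformulas}(3)--(5) (plus the easy remark, in the spirit of Lemma \ref{reductioncommutes}, that reduction commutes with conjugation by $\mathrm{GL}_2(\OCp)$, so semi-stability of $\Phi_p$ is not disturbed), yields exactly $\bar a_i=0$ for $i\le\tfrac{d-1}{2}$ and $\bar b_j=0$ for $j\le\tfrac{d+1}{2}$ with no further conjugation needed, i.e.\ instability of $\Phi_p$ by Theorem \ref{numericalcriterion}, the desired contradiction; the coefficient bookkeeping is the same as in the proof of Proposition \ref{semistableminimal}, and the common-root/fixed-point dichotomy you describe is the translation via Theorem \ref{enhancednumericalcriterion}. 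What each approach buys: yours is elementary and self-contained (no base change to $K'$, no appeal to \cite{PST} or to the projectivity of $(\Mcal_d)^{ss}$), and notably the strict inequality lets you dodge the parity obstruction that limits Proposition \ref{semistableminimal} to even $d$ — you only need to exclude $N_{\Phi^\Gamma,p}=0$, not prove full minimality; the paper's proof is more conceptual, placing the statement inside the GIT quotient picture that the rest of the paper exploits.
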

\begin{proof}
First we will show, without any loss in generality, we may replace the field $K$ with a finite extension $K'$.  That is, it is sufficient to prove the theorem for the map induced by a finite extension $K'$, $\varphi_{K'}:\PP^1_{K'}\rightarrow\PP^1_{K'}$.  Considering this extension is necessary to use techniques from \cite{PST}. Under the assumptions on $C$, one can construct a curve $C'$ and a finite morphism $C' \rightarrow C$. This is obtained by patching together the integral closures in $K'$ of an affine open cover of $C$. 

Suppose $U_{\Phi_{K'}}=\Ufrak_{\varphi_{K'}}$, where $U_{\Phi_{K'}}$ and $\Ufrak_{\varphi_{K'}}$ are the subsets of $C'$ defined above associated to the morphism $\varphi_{K'}$ and presentation $\Phi_{K'}$ defined over the extension field $K'$.  Suppose the locus of singular reduction $T'=C'\backslash U_{\Phi_{K'}}$ and the conductor $\mathfrak{f}_{\varphi_{K'}}=C'\backslash \Ufrak_{\varphi_{K'}}$ are equal.  If we restrict the finite map $C'\rightarrow C$ to $T'\subseteq C'$, we obtain a surjection $\varpi:T'\rightarrow T$, where $T=C\backslash U_\Phi$.  If $p\in T$ then there exists a $p'\in T'$ such that $\varpi(p')=p$.  Since $T'=\mathfrak{f}_{\varphi_{K'}}$ it follows that $p=\varpi(p')\in\mathfrak{f}_\varphi$, because the conductor over $K'$ will land in the conductor over $K$ via the map $C'\rightarrow C$.  This shows, $T\subseteq\mathfrak{f}_\varphi$.  We have $\mathfrak{f}_\varphi\subseteq T$ by construction.  Therefore $\mathfrak{f}_\varphi= T$ and $U_\Phi=\Ufrak_\varphi$.

Now, by the discussion above there exists a map $U_\Phi\rightarrow\Rat_d$, and composing with the quotient we have
$$U_\Phi\rightarrow\Rat_d\rightarrow\Mcal_d.$$
The first half of this can be trivially extended to $C\rightarrow\PP^{2d+1}$.  Given that $\Phi$ is a semi-stable presentation we know that this extension factors through the semi-stable locus $(\PP^{2d+1})^{ss}$.  Therefore composing with the quotient map we obtain
$$C\stackrel{F_\Phi}{\longrightarrow} (\PP^{2d+1})^{ss}\stackrel{\pi}{\longrightarrow}(\Mcal_d)^{ss}$$
and denote this composition by $f:C\rightarrow(\Mcal_d)^{ss}$.

We will now be applying the methods in \cite{PST}; to do so we will have to consider the ``good reduction loci" of models of presentations, rather than of the presentations themselves, as the argument there requires. So for any presentation $\Psi$ and associated model $(\a,\b)$ we define $U_{(\a,\b)}$ to be the open subset of $C$ consisting of those points $p$ where $(\a,\b)$ is a $p$-model over $\OCp$ and the resultant $\rho(\a,\b)$ is a unit. This is a subset of the non-singular reduction locus of $\Psi$, and also yields a map $U_{(\a,\b)}\rightarrow\Rat_d$.  Composing with the quotient map as above gives
$$U_{(\a,\b)}\rightarrow\Rat_d\rightarrow\Mcal_d.$$
Considering now all possible models of presentations of $\varphi$, we note that each open set $U_{(\a,\b)}$ is contained in $\Ufrak_\varphi$ and we can take a collection of these open sets $\{U_{(\a,\b)_i}\}$ to create a cover of $\Ufrak_\varphi$. Choose a cover that contains $U = U_{(\a_{0},\b_{0})}$ where now $(\a_0,\b_0)$ is a model of the presentation $\Phi$. Using the arguments from \cite{PST} \S 3.4 we can construct a morphism
$$\Ufrak_\varphi\rightarrow\Mcal_d$$
from the morphisms $U_{(\a,\b)_i}\rightarrow\Rat_d\rightarrow\Mcal_d$.  Due to the methods in \cite{PST} we have to consider a base extension of $K$.  However, we have shown that this will not affect our argument.  Now we have the composition
$$\Ufrak_\varphi\rightarrow\Mcal_d\hookrightarrow(\Mcal_d)^{ss}.$$
By the valuative criterion for properness, since $(\Mcal_d)^{ss}$ is projective, the composition of these morphisms, $\Ufrak_\varphi\rightarrow(\Mcal_d)^{ss}$ extends uniquely to C,
$$f':C\rightarrow(\Mcal_d)^{ss}.$$

Observe from our two constructions that $f'|_{U}=f|_{U}$.  The equality, $f'=f$, follows again from the valuative criterion.

Now we must note that the (scheme theoretic) points of $(\mathbb{P}^{2d+1})^{ss}$ that are not in $\mathrm{Rat}_d$ go to points of $(\Mcal_d)^{ss} \backslash\Mcal_d$ under $\pi$. This is because $\pi$ is the restriction to the semi-stable locus of the canonical rational map induced by the inclusion of graded rings $ \Z[\a,\b]^{SL_2} \hookrightarrow \Z[\a,\b]$.

With this in mind, as we stated earlier, $U_\Phi\subset\Ufrak_\varphi$.  Suppose $p\in\Ufrak_\varphi$ and $p$ not in $U_\Phi$.  Then $f(p)\in(\Mcal_d)^{ss} \backslash\Mcal_d $ by our construction of $f$ and the fact $p$ is not in $U_\Phi$.  On the other hand, by our construction of $f'$, $f'(p)\in\Mcal_d$ which is a contradiction.  Therefore, $\Ufrak_\varphi\backslash U_\Phi$ must be empty.
\end{proof}
An interesting question that we have not explored is \textit{when} a morphism has a semi-stable presentation.

The above theorem says that semi-stable presentations realize the conductor as their singular reduction locus, a property that any minimal presentation certainly must have. So one might hope that semi-stable presentations are also minimal.
\begin{conj} Let $\Phi$ be a semi-stable presentation of $\varphi$. Then $\Phi$ is a minimal presentation of $\varphi$.
\end{conj}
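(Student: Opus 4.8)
The plan is to combine the local criterion of Proposition~\ref{minimalitycondition} with the numerical criterion of Theorem~\ref{numericalcriterion}, reducing the arbitrary $PGL_2(K)$-conjugates that occur to diagonal ones by Smith normal form over the local rings $\OCp$.

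Fix $p\in C$ and a $p$-model $(\a,\b)$ of $\Phi$. Since $\Phi$ is a semi-stable presentation, its reduction $\Phi_p=[\a(p),\b(p)]$ lies in $(\PP^{2d+1})^{ss}(\kappa(p))$. By Proposition~\ref{minimalitycondition} it suffices to prove that for every $[\Gamma]\in PGL_2(K)$ with $m:=v_p(\det[\Gamma])>0$ one has $n_p(\a^\Gamma,\b^\Gamma)\le\frac{d+1}{2}m$. Take $\Gamma$ to be a $p$-model, so its entries lie in the DVR $\OCp$ with $v_p(\Gamma)=0$; Smith normal form then yields $\Gamma=U\,\mathrm{diag}(1,\pi_p^{m})\,V$ with $U,V\in GL_2(\OCp)$, the first elementary divisor being a unit since $v_p(\Gamma)=0$ and the product of the two being $\det\Gamma$ up to a unit. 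Set $D=\mathrm{diag}(1,\pi_p^{m})$. Conjugation by $U$ and by $V$ is part of the $PGL_2$-action, and conjugating by an element of $GL_2(\OCp)$ leaves $n_p$ unchanged (Proposition~\ref{valuationformulas}(4)); peeling these off the two ends of the composition gives $n_p(\a^\Gamma,\b^\Gamma)=n_p\bigl((\a')^{D},(\b')^{D}\bigr)$, where $(\a',\b'):=(\a^U,\b^U)$ is again a $p$-model, now of the presentation $\Phi^U$, whose reduction $(\Phi^U)_p=(\Phi_p)^{U(p)}$ (with $U(p)\in GL_2(\kappa(p))$) is still semi-stable because semi-stability is conjugation-invariant. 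Carrying out this bookkeeping correctly — that the $GL_2(\OCp)$ factors really do peel off without changing $n_p$ or the semi-stability of the reduction at $p$ — is the one step that needs care.

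Next, unwinding the definition of the action shows that conjugation by $D$ multiplies $a_i$ by $\pi_p^{m(i+1)}$ and $b_i$ by $\pi_p^{mi}$, so
\[
n_p\bigl((\a')^{D},(\b')^{D}\bigr)=\min_{0\le i\le d}\min\bigl(v_p(a'_i)+m(i+1),\,v_p(b'_i)+mi\bigr).
\]
Assume, for contradiction, that this exceeds $\frac{d+1}{2}m$. Then for every integer $i\le\frac{d-1}{2}$ we get $v_p(a'_i)>m\bigl(\frac{d-1}{2}-i\bigr)\ge0$, hence $a'_i(p)=0$; and for every integer $j\le\frac{d+1}{2}$ we get $v_p(b'_j)>m\bigl(\frac{d+1}{2}-j\bigr)\ge0$, hence $b'_j(p)=0$. (Both parities of $d$ work the same way; it is the strictness of the bound that forces $v_p>0$, and hence vanishing mod $p$, even when the right-hand side is $0$.) Thus $(\Phi^U)_p$ already has the vanishing pattern of Theorem~\ref{numericalcriterion}(1) with the identity $SL_2$-conjugation, so it is unstable — contradicting the fact that it is semi-stable.

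Therefore $n_p(\a^\Gamma,\b^\Gamma)\le\frac{d+1}{2}\,v_p(\det[\Gamma])$ for all $\Gamma$ and all $p$, so by Proposition~\ref{minimalitycondition} the presentation $\Phi$ realizes $\epsilon_p(\varphi)$ at every $p\in C$; that is, $R_{\varphi}=\sum_{p}N_{\Phi,p}[p]$, which is precisely the assertion that $\Phi$ is a minimal presentation. The substantive point is the reduction, via Smith normal form, to diagonal conjugates; once that is in place the rest is a short valuation computation feeding into the numerical criterion, and I would expect the conjecture to come out as a theorem.
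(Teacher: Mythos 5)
The paper itself does not prove this statement: it is recorded as a conjecture, and the authors establish only partial cases, namely Proposition~\ref{semistableminimal} (even $d$, diagonal conjugates, extended in its corollary to conjugates of the form $[D][\Gamma']$ with $\Gamma'\in\mathrm{GL}_2(\OCp)$) and the degree-two results, Theorems~\ref{degreetwominimal} and~\ref{ssminimaldegreetwo}. Your argument goes beyond this and, as far as I can check, it is correct. The ingredient the paper is missing and you supply is the Smith (Cartan) decomposition $\Gamma=U\,\mathrm{diag}(1,\pi^{m})\,V$ over the DVR $\OCp$, which is legitimate because a $p$-model representative of $[\Gamma]$ has entries in $\OCp$ with unit gcd, so the first elementary divisor is a unit and the second has valuation $m=v_p(\det[\Gamma])$. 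The two $\mathrm{GL}_2(\OCp)$ factors are harmless for exactly the reasons you give: one of them leaves $n_p$ unchanged by Proposition~\ref{valuationformulas}(4), and the other replaces $\Phi$ by $\Phi^{U}$, whose $p$-model $(\a^{U},\b^{U})$ reduces to $(\Phi_p)^{U(p)}$ with $U(p)\in\mathrm{GL}_2(\kappa(p))$ invertible because $\det U$ is a unit; this is the computation of Lemma~\ref{reductioncommutes}, which does not actually require the constant field, and semi-stability of the reduction is preserved since the semi-stable locus is invariant under $\mathrm{SL}_2(\overline{\kappa(p)})$ and scalars act trivially on the projective point. (Which of $U,V$ is absorbed and which is peeled off depends on whether the coefficient action is a left or right action, but since both lie in $\mathrm{GL}_2(\OCp)$ the argument is insensitive to the convention; you should state this explicitly.) Your diagonal estimate then recovers Proposition~\ref{semistableminimal} uniformly in the parity of $d$: the strict inequality $n_p>\frac{d+1}{2}m$ forces exactly the vanishing pattern of Theorem~\ref{numericalcriterion}(1) with the identity conjugation, contradicting semi-stability of $(\Phi^{U})_p$; note the paper restricts its diagonal estimate to even $d$, but as your computation shows only non-negativity of the relevant exponent is needed, so the odd case goes through as well. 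Feeding this into Proposition~\ref{minimalitycondition} at every $p$ gives minimality, so your argument in fact upgrades the paper's conjecture to a theorem rather than reproducing an existing proof; the comparison with the paper is that you apply its own diagonal estimate not to $\Phi$ but to its $\mathrm{GL}_2(\OCp)$-conjugate $\Phi^{U}$, and the Smith decomposition is what makes that sufficient.
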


We have two partial results in this direction. One treats the degree two case: Theorems \ref{degreetwominimal} and \ref{ssminimaldegreetwo} below. The other is the following.
\begin{prop}
\label{semistableminimal}
Let $\Phi \in \ratd(K)$ be a semi-stable presentation.  Suppose $d = 2r$ is even. Let $p \in C$. Then $N_{\Phi, p}$ is minimal for all $\PGL_2(K)$ conjugates of $\Phi$ by diagonal elements $ [\Gamma] = \left[\begin{array}{cc}u & 0 \\0 & v\end{array}\right]$.
\end{prop}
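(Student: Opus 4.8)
The plan is to reduce the statement, via Proposition~\ref{minimalitycondition}, to checking the inequality $n_p(\a^\Gamma,\b^\Gamma)/v_p(\det[\Gamma]) \leq \frac{d+1}{2} = r + \tfrac12$ only for diagonal $[\Gamma]$, and then to exploit the explicit semi-stability criterion of Theorem~\ref{numericalcriterion}. Concretely, let $(\a,\b)$ be a $p$-model of $\Phi$; by conjugating by a suitable element of $\GL_2(\mathcal{O}_{C,p})$ (which changes nothing by Proposition~\ref{valuationformulas}(3)--(5)) I may assume the diagonal conjugator is $\Gamma = \mathrm{diag}(u,1)$ with $v_p(u) = m > 0$, so $v_p(\det[\Gamma]) = m$. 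The key computation is how the diagonal action scales the coefficients: tracing through the definition of the $\GL_2$-action, conjugation by $\mathrm{diag}(u,1)$ sends the coefficient $a_i$ to $u^{d-1-i}\,a_i$ (up to a global unit factor coming from $\Gamma^{\mathrm{adj}}$ and overall normalization) and $b_j$ to $u^{d-1-j}\,b_j$; I will carry out this bookkeeping carefully since the exact exponents are what the whole argument turns on. Hence after normalizing back to a $p$-model we get
\[
n_p(\a^\Gamma,\b^\Gamma) \;=\; \min_{i,j}\bigl( (d-1-i)m + v_p(a_i),\ (d-1-j)m + v_p(b_j)\bigr) \;-\; (\text{correction}),
\]
and the claim becomes a purely valuation-theoretic inequality in the $v_p(a_i)$, $v_p(b_j)$ and $m$.

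Next I would invoke semi-stability. Since $\Phi$ is a semi-stable presentation, its reduction $\Phi_p = [\a(p),\b(p)]$ lies in $(\PP^{2d+1})^{ss}(\kappa(p))$. The contrapositive of Theorem~\ref{numericalcriterion}(1) then says: it is \emph{not} the case that (after $\SL_2$-conjugation — but here I want to use it in the given coordinates, which is why I must be careful) $a_i(p) = 0$ for all $i \le \frac{d-1}{2}$ and $b_j(p) = 0$ for all $j \le \frac{d+1}{2}$. Because the model is a $p$-model, $c_i(p)=0$ is equivalent to $v_p(c_i) > 0$. The point is that if the diagonal conjugate violated the minimality inequality, one could read off from the scaling formula above that sufficiently many low-index $a_i$ and $b_j$ must have strictly positive valuation at $p$ — precisely the pattern forbidden by the numerical criterion. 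I would make this quantitative: suppose for contradiction $n_p(\a^\Gamma,\b^\Gamma) > (r+\tfrac12)m$; then for every index $i \le r-1$ one needs $(d-1-i)m + v_p(a_i) > (r+\tfrac12)m$, i.e. $v_p(a_i) > (r + \tfrac12 - (d-1-i))m = (i - r + \tfrac32)m \ge \tfrac12 m > 0$, hence $a_i(p)=0$; similarly one forces $b_j(p)=0$ for $j \le r$. That is exactly the unstable configuration of Theorem~\ref{numericalcriterion}(1), contradicting semi-stability of $\Phi_p$. Running the argument for the other diagonal normalization $\Gamma = \mathrm{diag}(1,u)$ (equivalently, handling $v_p(u)<0$, or conjugating by the swap matrix, which again is in $\GL_2(\mathcal{O}_{C,p})$ when a unit) gives the symmetric bound, and Proposition~\ref{minimalitycondition} then yields minimality among all diagonal conjugates.

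The main obstacle I anticipate is the interface between the numerical criterion "after an $\SL_2(k)$ conjugation" and the fact that here I am working in fixed coordinates: I need that the \emph{specific} coefficients of the $p$-model — not merely those of some conjugate — fall into the forbidden pattern, so that semi-stability of the reduction is genuinely contradicted. This is fine because Theorem~\ref{numericalcriterion}(1) characterizes instability by the existence of such a conjugate, so if the $p$-model's own coefficients already satisfy the vanishing conditions, the point is unstable outright; the delicacy is only in confirming that the diagonal scaling exponents $(d-1-i)$ are exactly right so that the threshold $i \le r-1$ (for $a$) and $j \le r$ (for $b$) emerges, matching $\frac{d-1}{2}$ and $\frac{d+1}{2}$. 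A secondary technical point is correctly accounting for the global unit/normalization factor when passing from $\Gamma$ to a $p$-model of $[\Gamma]$ and from $(\a^\Gamma,\b^\Gamma)$ back to a $p$-model, but by Proposition~\ref{valuationformulas} these do not affect $N_{\Phi,p}$ and can be absorbed cleanly.
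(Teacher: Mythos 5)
Your overall strategy is the same as the paper's: reduce via Proposition~\ref{minimalitycondition} to the bound $n_p(\a^{\Gamma},\b^{\Gamma}) \le \tfrac{d+1}{2}\, v_p(\det\Gamma)$ for diagonal $\Gamma$, write out the explicit scaling of the coefficients, and play it off against the numerical criterion of Theorem~\ref{numericalcriterion} (the paper argues directly, by using semi-stability to locate a coefficient among $a_0,\dots,a_{r-1},b_0,\dots,b_r$ that is a unit at $p$ and bounding $n_p$ by the valuation of its image; you argue by contradiction). However, there is a genuine gap exactly at the step you flagged as decisive. The scaling exponents are wrong: for $\Gamma=\mathrm{diag}(u,v)$ the action defined in the paper sends $a_i \mapsto u^{d-i}v^{i+1}a_i$ and $b_j \mapsto u^{d+1-j}v^{j}b_j$, so for $\mathrm{diag}(u,1)$ the exponents are $d-i$ on the $\a$-row and $d+1-j$ on the $\b$-row, not $d-1-i$ for both. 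The off-by-one asymmetry between the two rows is precisely what generates the two different thresholds $\tfrac{d-1}{2}$ and $\tfrac{d+1}{2}$ in Theorem~\ref{numericalcriterion}, and your symmetric exponents lose it. Moreover, even granting your exponents, the displayed deduction is invalid: from $n_p(\a^{\Gamma},\b^{\Gamma}) > (r+\tfrac{1}{2})m$ you get $v_p(a_i) > (i-r+\tfrac{3}{2})m$, and $(i-r+\tfrac{3}{2}) \le \tfrac{1}{2}$ for $i \le r-1$ (it is $\le 0$ for $i \le r-2$), so only $a_{r-1}(p)=0$ is forced, not $a_i(p)=0$ for all $i \le r-1$; the claimed contradiction with semi-stability is never reached.

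With the correct exponents the argument does close, but the two normalizations behave differently and must be kept apart. For $\mathrm{diag}(1,u)$ (exponents $i+1$ on $a_i$ and $j$ on $b_j$), the hypothesis $n_p > (r+\tfrac{1}{2})m$ forces $a_i(p)=0$ for $i \le r-1$ and $b_j(p)=0$ for $j \le r$, which is the forbidden pattern in the given coordinates. For $\mathrm{diag}(u,1)$ it forces the mirrored pattern $a_i(p)=0$ for $i \ge r$ and $b_j(p)=0$ for $j \ge r+1$, and one must then note that the coordinate swap (an $\SL_2$-conjugation, sending $a_i \mapsto \pm b_{d-i}$, $b_j \mapsto \pm a_{d-j}$) carries this to the forbidden pattern, so the reduction is still unstable. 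That corrected version is the paper's proof read contrapositively: the paper keeps a general $\mathrm{diag}(u,v)$ with $n=v_p(u)\le m=v_p(v)$, uses semi-stability (plus the swap) to find a unit coefficient among $a_0,\dots,a_{r-1},b_0,\dots,b_r$, and bounds $n_p(\a^{\Gamma},\b^{\Gamma})$ by the valuation $(d+1-k)n+km=(d+1-2k)n+k(n+m) \le (d+1)\tfrac{n+m}{2}$ of its image, where $k \le r$ is the exponent of $v$.
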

\begin{proof}
We will show that
$$\frac{n_p(\a^{\Gamma}, \b^{\Gamma})}{v_p(\mathrm{det}(\Gamma))} \leq \frac{d+1}{2}$$
 for any $\left(\begin{array}{cc}u & 0 \\0 & v\end{array}\right)$ in $\mathrm{GL}_2(K)$ with $v_p(u)$ and $v_p(v)$ both greater than or equal to $0$. The result will follow by Proposition~\ref{minimalitycondition} (it is easy to see that this proposition can be appropriately restricted to any subset of $\PGL_2(K)$). By Theorem~\ref{numericalcriterion}, we know that at least one of the coefficients
\begin{align}\label{box0}
a_0, ... ,a_{r-1}, b_0, ... ,b_r
\end{align}
doesn't vanish at $p$. By conjugation, we may say the same about the coefficients,
$$a_r, ... , a_d, b_{r+1}, ... ,b_d.$$

Now conjugate by $\left(\begin{array}{cc}u & 0 \\0 & v\end{array}\right)$. The new coefficients are

\begin{align}\label{box1}
vu^da_0, v^2u^{d-1}a_1, ... , v^ru^{r+1}a_{r-1}, u^{d+1}b_0, vu^{d}b_1, ... , v^ru^{r+1}b_r
\end{align}
and
\begin{align}\label{box2}
v^{r+1}u^ra_r, v^{r+2}u^{r-1}, ... , v^{d+1}a_d, v^{r+1}u^rb_{r+1},v^{r+2}u^{r-1}b_{r+2} ... ,v^dub_d.
\end{align}

In (\ref{box1}), the power of $v$ that shows up is at most $r$, and in (\ref{box2}) the power of $u$ that shows up is at most $r$. Since the patterns of powers of $v$ that show up in (\ref{box1}) and (\ref{box2}) are identical when we permute $u$ and $v$ in one of them, it will be no loss of generality to suppose that $n = v_p(u) \leq v_p(v) = m$, and consider the coefficients in (\ref{box1}).

Now, $v_p(\mathrm{det}(\Gamma)) = n+m$. Take any of the coefficients in (\ref{box1}) such that the corresponding original coefficient from (\ref{box0}) doesn't vanish. Call this coefficient $c$. Thus the valuation of $c$ at $p$ will come entirely from the contributions of the powers of $u$ and $v$. Further, we know that the power $v$ makes at most a contribution of $rm$ to the valuation. Set $k \leq r$ to be the exponent on $v$ in $c$. We have
\begin{align*}
n_p(\a^{\Gamma}, \b^{\Gamma}) & \leq v_p(c) \\
&= (d+1-k)n +km \\
&= (d+1)n-kn+km \\
&= (d+1)n -2kn + kn +km \\
&= (d+1 -2k)n + k(n+m).\\
\end{align*}
Since $2k \leq 2r \leq d$, we have that $d+1 - 2k$ is positive.  Since $n \leq m$ implies that $n \leq \frac{m+n}{2}$, we have
\begin{align*}
(d+1 -2k)n + k(n+m) & \leq (d+1-2k) \left(\frac{m+n}{2}\right) + 2k\left(\frac{m+n}{2}\right) \\
&= (d+1)\left(\frac{m+n}{2}\right).
\end{align*}
Thus
$$\frac{n_p(\a^{\Gamma}, \b^{\Gamma})}{m+n} \leq \frac{d+1}{2}.$$
\end{proof}

The above argument does not work for $d$ odd. However, with Proposition~\ref{valuationformulas} (5), we may strengthen this result.
\begin{cor}
In the above setting, $N_{\Phi, p}$ is minimal among all conjugates of $\Phi$ of the form
$$[\Gamma] = [D][\Gamma'],$$
where $[D] = \left[\begin{array}{cc}u & 0 \\0 & v\end{array}\right]$ as above and $\Gamma' \in \mathrm{GL}_2(\mathcal{O}_{C,p})$.

\end{cor}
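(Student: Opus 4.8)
The plan is to deduce the corollary formally from Proposition~\ref{semistableminimal} together with the local invariance statement Proposition~\ref{valuationformulas}(5); there is essentially no new computation to carry out. The crucial structural point is that in a conjugate of the prescribed shape $[\Gamma] = [D][\Gamma']$ the integral factor $[\Gamma']$ appears as the \emph{outer} (second) factor. With the group-action convention used throughout the paper, namely $\Phi^{\Gamma_1\Gamma_2} = (\Phi^{\Gamma_1})^{\Gamma_2}$, this gives
$$\Phi^{[D][\Gamma']} = \bigl(\Phi^{[D]}\bigr)^{[\Gamma']}.$$
Since $[\Gamma'] \in \GL_2(\OCp)$, Proposition~\ref{valuationformulas}(5) applied to the presentation $\Phi^{[D]}$ yields $N_{\Phi^{[D][\Gamma']},p} = N_{\Phi^{[D]},p}$. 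It is exactly here that the order matters, and this is the subtlety flagged in the paragraph following Proposition~\ref{neededcases}: had $[\Gamma']$ been the inner factor we would not immediately know that conjugating by it fixes $N_{\cdot,p}$, and we would in addition have to worry about whether the intermediate presentation is still semi-stable — which, fortunately, we never need.

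Next I would invoke Proposition~\ref{semistableminimal}. Since $\Phi$ is a semi-stable presentation and $d = 2r$ is even, that proposition gives $N_{\Phi,p} \le N_{\Phi^{[D]},p}$ for every diagonal $[D] = \left[\begin{array}{cc}u & 0 \\ 0 & v\end{array}\right]$; one may normalize $u, v$ so that $v_p(u), v_p(v) \ge 0$, which is harmless because in $\PGL_2(K)$ every diagonal class has such a representative, so the restriction ``as above'' costs nothing. Combining this inequality with the displayed identity gives $N_{\Phi,p} \le N_{\Phi^{[D][\Gamma']},p}$ for every conjugate of the stated form. Finally, the value $N_{\Phi,p}$ is itself attained within this family (take $D = I$ and $\Gamma' = I$), so $N_{\Phi,p}$ is the minimum, which is the assertion.

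The honest answer to ``which step is the main obstacle'' is that there is none of substance: the corollary is a formal consequence of the previous proposition and the $\GL_2(\OCp)$-invariance of $N_{\cdot,p}$. The only point demanding any care is the composition-order bookkeeping in the $\PGL_2(K)$-action spelled out above — making sure that conjugating first by the diagonal part and then by the integral part is really what the product $[D][\Gamma']$ effects — and once that is pinned down the proof is two lines.
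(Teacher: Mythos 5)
Your proposal is correct and matches the paper's own argument: apply Proposition~\ref{valuationformulas}(5) to the presentation $\Phi^{[D]}$ to get $N_{\Phi^{[D][\Gamma']},p}=N_{\Phi^{[D]},p}$, then conclude by Proposition~\ref{semistableminimal}. Your extra care with the composition-order convention (the right-hand factor $[\Gamma']$ acting second) is exactly the point the paper relies on implicitly, so there is nothing to add.
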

\begin{proof}
Proposition~\ref{valuationformulas} (5) implies directly that $v_p(\rho([\a^{D\Gamma'}, \b^{D\Gamma'}])) = v_p(\rho([\a^{D}, \b^{D}]))$, and then we apply Proposition~\ref{semistableminimal}.
\end{proof}


\subsection{Latt\`{e}s Maps}
\textit{Throughout this section, we will assume $K$ is a function field of a nonsingular curve over an algebraically closed field $k$ of characteristic zero}. We assume characteristic zero in order to avoid the problems that positive characteristic may cause in the calculations below. It is possible that this assumption could be relaxed somewhat by making appropriate assumptions about the residue characteristic, obtaining more general statements than those that follow.

Under these assumptions, we can construct $\Ratd \subset \PP_k^{2d+1}$ over the base field $k$. We assume this construction in place for this section only.

A family of maps that is useful to consider while studying dynamics in parallel to the theory of curves are the Latt\`{e}s maps. Interestingly, if $E(K)$ is an elliptic curve given by $y^2 = x^3+Ax+B$, then its discriminant is $4A^3+27B^2$. If $\varphi_E$ is the Latt\`{e}s map associated to the $x$ coordinate of the multiplication by 2 map on the elliptic curve, then the resultant of this dynamical system is, up to multiplication by an integer, the square of this discriminant. We will show this below.

Let $E$ be an elliptic curve over $K$. Denote by $[n]$ the multiplication by $n$ map with respect to the group structure on $E$. The quotient $E/\{\pm 1\}$ is isomorphic to $\pone_K$ and so we have a map:$$\pi : E \rightarrow \pone_K$$
The map $[n]$ descends to a map $\varphi_n$ on $\pone_K$ via this projection:
$$\xymatrix{ E \ar[r]^{[n]} \ar[d] & E \ar[d] \\ \pone_K \ar[r]^{\varphi_n} & \pone_K} $$
Now fix variables $X$, $Y$, and $Z$ giving a Weierstrass equation for $E$.  Let $x=X/Z$, $y = Y/Z$, and then $y^2 = x^3+Ax+B$ is the Weierstrass equation. Let $P(x) = x^3 + Ax + B$. With respect to the coordinates $[x,y,1]$, the map $\pi$ is then:
$$[x,y,1] \mapsto [x,1].$$
Further, with respect to the coordinate $[x,1]$ on $\pone_K$, $\varphi_n$ is then given explicitly as a rational function in terms of $x$, $A$, and $B$,
$$\varphi_n([x,1]) = [\frac{P(A, B, x)}{Q(A, B, x)}, 1].$$
This formula admits the form for n = 2,
$$\varphi_{2}([x,1]) = [\frac{(P'(x))^2-8xP(x)}{4P(x)},1].$$
From this we obtain the following proposition.
\begin{prop}
With the notation above, if $D$ is the discriminant of $E$ (which is the discriminant of $P(x)$) and $R$ is the resultant of $\varphi_{2}$, then $R = 256D^2$.
\end{prop}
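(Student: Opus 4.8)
The plan is to compute everything explicitly in terms of $x$ and then compare two polynomial expressions. First I would fix the coordinate $x$ on $\pone_K$ and write the degree $2$ map as $\varphi_2 = [F_\a, F_\b]$ with $F_\a = (P'(x))^2 - 8xP(x)$ and $F_\b = 4P(x)$, both viewed as degree $3$ polynomials in $x$ (homogenize with $Y$ if one prefers, but working affinely is cleaner). Here $P(x) = x^3 + Ax + B$, so $P'(x) = 3x^2 + A$ and one computes $F_\a = (3x^2+A)^2 - 8x(x^3+Ax+B) = x^4 - 2Ax^2 - 8Bx + A^2$. Note $F_\a$ has degree $4$ in $x$, which looks alarming, but in the correct homogeneous bidegree-$3$ picture the leading term is fine: the pair $(F_\a, F_\b)$ really represents the map $x \mapsto F_\a(x)/F_\b(x)$ of degree $\max(\deg F_\a, \deg F_\b)$; I would instead present $\varphi_2$ directly as a ratio of two cubics in the variable governing $\pone_K$ (i.e. treat $[x:y:1]\mapsto[x:1]$ and push forward $[2]$), so that the numerator and denominator are honestly homogeneous of degree $2$ — this bookkeeping is exactly the subtle point and I address it below.

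The core computation is then: the resultant $R = \rho(F_\a, F_\b)$ is, by definition, a polynomial of bidegree $(2,2)$ in the coefficients, and I would evaluate it as the $4\times 4$ Sylvester determinant (since $d=2$, the matrix from the excerpt is $2d\times 2d = 4\times 4$) built from the coefficient vectors $(a_0,a_1,a_2)$ and $(b_0,b_1,b_2)$ of $F_\a$ and $F_\b$. Separately, the classical discriminant of the cubic $P(x) = x^3 + Ax + B$ is $D = -(4A^3 + 27B^2)$ up to the usual sign/normalization convention — I would fix the convention so that ``discriminant of $E$'' matches what is used elsewhere in the paper. The claim $R = 256 D^2$ is then an identity between two explicit polynomials in $\Z[A,B]$: expand the Sylvester determinant, collect terms, and verify it equals $256(4A^3+27B^2)^2$. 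Because both sides are (weighted-)homogeneous in $A$ and $B$ — assigning weights $\mathrm{wt}(A) = 2$, $\mathrm{wt}(B) = 3$ makes $D$ weight $6$ and $D^2$ weight $12$, and the resultant of two degree-$3$ forms whose coefficients are polynomials in $A,B$ of matching weights is weight $12$ as well — the identity is forced by checking it on a few numerical specializations (say $(A,B) = (1,0), (0,1), (1,1), (1,-1)$), or simply by a direct expansion. This reduces the whole proposition to a finite, mechanical check.

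The main obstacle, as flagged, is getting the homogeneous presentation of $\varphi_2$ exactly right so that the resultant form $\rho$ is being applied to the correct degree-$2$-homogeneous pair, and thereby pinning down the constant $256$ (as opposed to some other power of $2$ times a power of a unit). Concretely, the displayed formula $\varphi_2([x,1]) = [\frac{(P'(x))^2 - 8xP(x)}{4P(x)}, 1]$ already encodes the standard duplication formula for the $x$-coordinate; I need to confirm that after clearing to a genuine pair of binary quadratic forms (in the homogeneous variables on $\pone_K$) no spurious common factor or scaling is introduced, since the resultant is only projectively well-defined up to $(2d)$-th powers of scalars and the statement $R = 256D^2$ is an equality of actual integers/polynomials for the normalized presentation. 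Once the normalization is fixed, formula \eqref{ResGrpAction} and the homogeneity remarks guarantee the shape of the answer, and the only remaining content is the determinant expansion. I would organize the write-up as: (i) derive $F_\a, F_\b$ from the duplication formula; (ii) record their coefficient vectors; (iii) write down the $4\times4$ Sylvester matrix; (iv) expand and simplify to $256(4A^3+27B^2)^2$, invoking weighted-homogeneity to shorten the verification; (v) identify $4A^3+27B^2$ (up to sign) as $D$, concluding $R = 256D^2$.
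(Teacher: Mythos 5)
Your overall plan (write down the homogeneous pair, expand the Sylvester determinant, compare with $256(4A^3+27B^2)^2$) is viable in principle, and the paper itself notes the identity ``can be calculated directly,'' but your setup contains a genuine error: $\varphi_2$ is not a degree $2$ map. It is the map induced on $x$-coordinates by multiplication by $2$ on $E$, so it has degree $2^2=4$ on $\mathbb{P}^1$; indeed your own computation $F_{\mathbf{a}}=(P'(x))^2-8xP(x)=x^4-2Ax^2-8Bx+A^2$ is an honest quartic, and no re-presentation can turn $\varphi_2$ into ``a ratio of two cubics'' or a pair of degree-$2$ forms, since the degree of a morphism is intrinsic. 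The resultant $R$ in the statement is the resultant of the two quartic forms $x^4-2Ax^2y^2-8Bxy^3+A^2y^4$ and $4x^3y+4Axy^3+4By^4$, i.e.\ an $8\times 8$ Sylvester determinant, bihomogeneous of degree $(4,4)$ in the coefficients --- not a $4\times 4$ determinant built from coefficient vectors $(a_0,a_1,a_2)$, $(b_0,b_1,b_2)$, which do not exist here. Your weight bookkeeping is correspondingly inconsistent (with $\mathrm{wt}(x)=1$, $\mathrm{wt}(A)=2$, $\mathrm{wt}(B)=3$ the correct count is $\deg_x F_{\mathbf{a}}\cdot\deg_x F_{\mathbf{b}}=4\cdot 3=12$; ``two degree-$3$ forms'' would give $9$), and the normalization issue you flag --- whether the constant is $256$ or some other power of $2$ --- is exactly what your write-up leaves unresolved. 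For the record, since the numerator is monic in $x$, the degree-$(4,4)$ homogeneous resultant coincides with the affine resultant of the quartic and the cubic, so no spurious factor enters from homogenizing the denominator.

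The paper's proof avoids the determinant entirely: it takes $D=\mathrm{Res}(P',P)$ and uses elementary resultant identities --- $\mathrm{Res}(f+hg,g)=\mathrm{Res}(f,g)$ to discard the $-8xP$ term, multiplicativity to get $\mathrm{Res}((P')^2,4P)=\mathrm{Res}(P',4P)^2$, and the scaling rule $\mathrm{Res}(P',4P)=4^{\deg P'}\mathrm{Res}(P',P)=16D$ --- so that $R=(16D)^2=256D^2$; the factor $256$ comes precisely from the $4$ in the denominator $4P(x)$, which answers your normalization worry. If you prefer to keep the direct computation, redo it with $d=4$: record the five coefficients of each quartic form, expand (or specialize) the $8\times 8$ determinant, and use weighted homogeneity of weight $12$ (the target space is spanned by $A^6$, $A^3B^2$, $B^4$) to reduce the verification to finitely many numerical specializations.
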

\begin{proof}
This can be calculated directly using the formula for the resultant. Alternatively, we may observe the following.  The discriminant is itself the resultant
\begin{align}
\label{D}
D = \mathrm{Res}(P'(x), P(x)).
\end{align}
If one looks at the matrix whose determinant yields the resultant, it is clear that, in general, $\mathrm{Res}(f(x),g(x)) = \mathrm{Res}(f(x) +h(x)g(x), g(x))$ for any polynomials $f, g, h$. Thus we have
$$R=\mathrm{Res}((P'(x))^2-8xP(x), 4P(x)) = \mathrm{Res}((P'(x))^2, 4P(x)).$$
From the difference of roots formula for the resultant,
$$\mathrm{Res}((P'(x))^2, 4P(x)) = \mathrm{Res}(P'(x), 4P(x))^2.$$
Writing this as a product of differences of roots and factoring out a power of $4^2$ from the power of the leading coefficient gives
$$\mathrm{Res}(P'(x), 4P(x))^2 = (16\mathrm{Res}((P'(x)), P(x)))^2.$$
The result follows from (\ref{D}).
\end{proof}

An interesting question we have not yet explored is whether the Latt\`{e}s $[n]$ maps yield similar nice forms for the resultant in terms of the discriminant.

Semi-stablility in the parameter space of elliptic curves and semi-stablility in the parameter space of rational maps are related notions in that each is a special case of the more general GIT notion of semi-stability. Each can be shown to be equivalent to a condition involving the multiplicities on roots of polynomials. Specifically, curves that are given by Weierstrass equations with a double root, but not a triple root, are semi-stable in the GIT sense, with respect to the group action of change variables on the defining equation. And semi-stability of an element of $\Pk$, in the GIT sense, is equivalent to a condition involving a bound on the order of vanishing of common roots of the two polynomials; this is expressed in Theorem \ref{enhancednumericalcriterion} below. Asking whether semi-stable bad reduction of an elliptic curve over a function field implies that the reduced Latt\`{e}s map is semi-stable yields the following negative answer:

\begin{eg}
\label{unstable}
Let $E$ be an elliptic curve over the function field of a smooth curve $C$ with Weierstrass equation $y^2 = x^3 +Ax + B$.  Let $p \in C$ be a closed point such that evaluation of $A$ and $B$ at $p$ is defined, and such that the resulting reduced equation defines a singular curve (over $k$) with a node (i.e. semi-stable in the space of curves):
$$y^2 = x^3 + A(p)x + B(p) = (x-\lambda_1)(x-\lambda_2)^2\textrm{ with }\lambda_1\neq\lambda_2.$$
The associated Latt\`{e}s maps, $\varphi_n$ define points in $\mathbb{P}_k^{2n^2+1}$ which are never GIT semi-stable with respect to the action by $\PGL_2$.
\end{eg} 
\begin{proof}
Here, $n^2$ is the degree of the map over the function field.
We may now prove Example~\ref{unstable}. To ease notation, let $A(p) = a, B(p) = b$. Let $x^3 + ax + b = q(x)$. Write $\varphi_{n,p}$ for the reduced map at $p$. We first handle the case $n=2$. We have, by the representation of $\varphi_2$ above
$$\varphi_{2,p} = \frac{(q'(x))^2-8xq(x)}{4q(x)} = \frac{(2(x-\lambda_1)(x-\lambda_2) + (x- \lambda_2)^2)^2-8x(x-\lambda_1)(x-\lambda_2)^2}{4(x-\lambda_1)(x-\lambda_2)^2}.$$

One sees that $\lambda_2$ is at least a common double root of the numerator and denominator.  Since $\lambda_1 \neq \lambda_2$ by assumption, it is clear that it not a common triple root, and that canceling $(x-\lambda_2)^2$ gives us a rational map of degree 2. To apply the above proposition, part 1 (b), we must have that $\lambda_2$ is a fixed point of this map. This is a straightforward calculation. First note that, since the $x^2$ term of the Weierstrass equation is $0$, $\lambda_1 = -2\lambda_2$. Then by plugging in to the above and after cancelation, we have
$$\varphi_{2,p}(\lambda_2) = \frac{(2(\lambda_2+2\lambda_2))^2-8\lambda_2(\lambda_2+2\lambda_2)}{4(\lambda_2+2\lambda_2)}= 3\lambda_2-2\lambda_2 = \lambda_2.$$
Thus $\lambda_2$ is a fixed point, so $\varphi_{2,p}$ is unstable.

The case n = 2 is the only case that requires verification of this fixed point criterion. For $n \geq 3$, we will use Theorem~\ref{enhancednumericalcriterion}, parts 1 (a) and 2 (a). Following \cite{SilvermanEllipticCurves}, (p. 105 ex. 3.7), we can describe the $x$ coordinate of multiplication by $n$ in the group law. Given our elliptic curve with Weierstrass equation $y^2 = x^3 + Ax +B$, we define
\begin{eqnarray*}
\Psi_1 &=& 1\\
\Psi_2 &=& 2y\\
\Psi_3 &=& 3x^4 + 6Ax^2 + 12Bx - A^2\\
\Psi_4 &=& 4y(x^6 + 5Ax^4 + 20Bx^3 - 5A^2x^2 - 4ABx -8B^2 - A^3)
\end{eqnarray*}
and for $m \geq 2$
$$\Psi_{2m+1} = \Psi_{m+2}\Psi_m^3-\Psi_{m-1}\Psi_{m+1}^3$$
and
$$\Psi_{2m} = \frac{\Psi_m(\Psi_{m+2}\Psi_{m-1}^2 - \Psi_{m-2}\Psi_{m+1}^2)}{2y}.$$
Then $$\frac{x\Psi_n^2-\Psi_{n+1}\Psi_{n-1}}{\Psi_n^2} $$
is a rational function in $x$ of degree $n^2$ that gives us the $x$ coordinate of multiplication by $n$ in the group law.

For each $n \geq 1$, let $\psi_n$ be the polynomial in $k(x,y)$ obtained by evaluation of the coefficients $A$ and $B$ in $\Psi_n$ at $p$. Our goal is to see that the reduction
$$\varphi_{n,p} = \frac{x\psi_n^2-\psi_{n+1}\psi_{n-1}}{\psi_n^2} $$
has an appropriately high power of $(x-\lambda_2)$ dividing the numerator and denominator. For each $n$, let $M_n$ be $\mathrm{min}(v(x\psi_n^2-\psi_{n+1}\psi_{n-1}), v(\psi_n^2))$, where $v$ is the order of vanishing of $(x-\lambda_2)$. We are looking for a lower bound on $M_n$. The following table outlines the situation for the first few $n$. Here $d$ is the degree of the map $\varphi_n$, and $r$ is, in line with Theorem~\ref{enhancednumericalcriterion}, chosen so that $2r = d$ for $d$ even and $2r +1 = d$ for $d$ odd.

\bigskip

\begin{tabular}{c|c|c|c|c|c|c|c|c|c}$n$ & 2 & 3 & 4 & 5 & 6 & 7 & 8 & 9 & 10 \\\hline $d$ & 4 & 9 & 16 & 25 & 36 & 49 & 64 & 81 & 100 \\\hline $r$ & 2 & 4 & 8 & 12 & 18 & 24 & 32 & 40 & 50 \\\hline $r$+1 & 3 &  & 9 &  & 19 &  & 33 &  & 51 \\\hline $r$+2 &  & 6 &  & 14 &  & 26 &  & 42 & \end{tabular}

\bigskip

Separating the even and odd cases, the bottom two rows give us the lower bound on $M_n$ that we wish to show. When $n = 2m$ is even, the sequence of $r+1$ is given by $2m^2+1$. When $n=2m+1$ is odd, the sequence of $r+2$ is given by $2(m^2+m+1)$.

Again taking into account that $\lambda_1 = -2\lambda_2$, we have that $a= -3\lambda_2^2$ and $b= 2\lambda_2^3$. From this, one can compute
$$\psi_3 = 3(x-\lambda_2)^3(x+ 3\lambda_2)$$
$$\psi_4 = 4y(x-\lambda_2)^5(x+5\lambda_2).$$
Thus we may calculate
\begin{eqnarray*}
v(\psi_1) &=& 0 \\
v(\psi_2) &=& 1 \\
v(\psi_3) &=& 3 \\
v(\psi_4) &=& 6.
\end{eqnarray*}

This will allow us to inductively compute a lower bound on $2v(\psi_n)$ for each $n$. For reasons explained below, we will do this by hand for the next few $\psi$. Using these, we will then calculate a lower bound on the numerator as well, and thus on $M_n$ for the first few $n$.

\bigskip

\begin{tabular}{c|c|c|c|c}$n$ & lower bound on $2v(\psi_{n})$ & lower bound on $M_n$ & $r+1$ & $r+2$ \\\hline 2 & 2 & 2 & 3 &  \\\hline 3 & 6 & 6 &  & 6 \\\hline 4 & 12 & 12 & 9 &  \\\hline 5 & 18 & 18 &  & 14 \\\hline 6 & 26 & 26 & 19 &  \\\hline 7 & 36 & 36 &  & 26 \\\hline 8 & 48 & 48 & 42 &  \\\hline 9 & 60 & 60 &  & 51\end{tabular}

\bigskip

The above table establishes the result for $3 \leq n \leq 9$. It is computed as follows. First we write
$$ \psi_5 = \psi_4\psi_2^3 - \psi_1\psi_3^3$$
and using elementary properties of valuations, deduce that $v(\psi_5) \geq 9$. We then write $\psi_6$ in terms of the lower $\psi_n$, and use the lower bounds already established on them to make a similar deduction.  The rest follows.  This gives us the leftmost column of the table, and then the column next to it is a simple calculation of a lower bound on each $x\psi_n^2 + \psi_{n-1}\psi_{n+1}$. Note that these two columns are actually equal so far.  It may be the case that the lower bound is actually an equality and that the numerator and denominator always have the same power of $(x-\lambda_2)$ in them. This is not required for what we want to show, however.

\begin{lemma}
\label{lowerbound}
Suppose $m \geq 2$. Then:
\begin{quote}
{\bf (1)}  $2v(\psi_{2m}) \geq 2m^2 +1$ and\\
{\bf (2)}  $2v(\psi_{2m+1}) \geq 2(m^2 + m + 1).$
\end{quote}
\end{lemma}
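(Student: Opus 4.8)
The plan is to induct on $m$, using the recursion formulas for the division polynomials to propagate the lower bounds on $2v(\psi_n)$, splitting into the two parity cases. The base cases $m=2,3$ (and the values up through $n=9$) have already been verified by the explicit table above, so I may assume the bounds hold for all indices strictly less than the one under consideration. The key mechanical input is that $v$ is a valuation, so $v(FG) = v(F)+v(G)$ and $v(F \pm G) \geq \min(v(F), v(G))$; combined with the recursions $\Psi_{2m+1} = \Psi_{m+2}\Psi_m^3 - \Psi_{m-1}\Psi_{m+1}^3$ and $\Psi_{2m} = \Psi_m(\Psi_{m+2}\Psi_{m-1}^2 - \Psi_{m-2}\Psi_{m+1}^2)/(2y)$, everything reduces to arithmetic on the exponents. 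One subtlety is that the recursion for $\Psi_{2m}$ divides by $2y$; since $v$ measures the order of vanishing of $(x-\lambda_2)$ and $y$ does not vanish at $\lambda_2$ (as $\lambda_2$ is a simple root of the Weierstrass cubic would be false — it is a double root, but $y^2 = (x-\lambda_1)(x-\lambda_2)^2$ shows $v(y) = 1$), one must track the contribution of $y$ carefully. In fact $v(2y) = 1$, so dividing by $2y$ lowers the valuation estimate by exactly $1$, and this is precisely what produces the ``$+1$'' appearing in part (1).

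To set up the induction cleanly, I would first record, for odd indices $n = 2\ell+1$, the bound $2v(\psi_n) \geq 2(\ell^2+\ell+1)$, i.e. $v(\psi_n) \geq \ell^2+\ell+1$; and for even indices $n = 2\ell$, the bound $2v(\psi_n) \geq 2\ell^2+1$, i.e. $v(\psi_n) \geq \ell^2 + \tfrac12$, which since $v(\psi_n)$ is an integer means $v(\psi_n) \geq \ell^2+1$ — but note the factor $2y$ in the recursion for even-index polynomials carries a half-integer's worth of $y$, so it is cleanest to phrase the even bound as an estimate on $2v(\psi_{2\ell}) = 2v(\psi_\ell) + 2v(\text{bracket}) - 2v(2y)$ directly in terms of $2v$ of the constituents. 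Concretely: for part (2), apply $\Psi_{2m+1} = \Psi_{m+2}\Psi_m^3 - \Psi_{m-1}\Psi_{m+1}^3$, so that
$$
v(\psi_{2m+1}) \geq \min\bigl(v(\psi_{m+2}) + 3v(\psi_m),\ v(\psi_{m-1}) + 3v(\psi_{m+1})\bigr),
$$
substitute the inductive bounds for $\psi_{m-1},\psi_m,\psi_{m+1},\psi_{m+2}$ according to the parities of $m$ and $m\pm 1$ (two cases: $m$ even, $m$ odd), and check the resulting polynomial-in-$m$ inequality reduces to $\geq m^2+m+1$. For part (1), apply the $\Psi_{2m}$ recursion in the form $2y\,\Psi_{2m} = \Psi_m(\Psi_{m+2}\Psi_{m-1}^2 - \Psi_{m-2}\Psi_{m+1}^2)$, so
$$
v(\psi_{2m}) + 1 \geq v(\psi_m) + \min\bigl(v(\psi_{m+2}) + 2v(\psi_{m-1}),\ v(\psi_{m-2}) + 2v(\psi_{m+1})\bigr),
$$
again split on the parity of $m$, substitute, and verify the bound $v(\psi_{2m}) \geq m^2 + \tfrac12$, i.e. $2v(\psi_{2m}) \geq 2m^2+1$.

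The main obstacle is purely bookkeeping: each of the two target inequalities unfolds into two subcases according to the parity of $m$, and within each subcase one must correctly select which of the two terms in the recursion's difference achieves the minimum and confirm the arising quadratic inequality in $m$ holds for all $m \geq 2$ (the small cases already being in the table). There is no conceptual difficulty — it is a finite check that a handful of quadratics dominate — but it is easy to mismatch a parity or drop the single unit coming from $v(2y)=1$, so I would organize it as a short table of the four resulting inequalities and verify each. A secondary point to make explicit is that the leading coefficients appearing when one re-homogenizes or clears denominators never vanish at $\lambda_2$ (they are nonzero constants or powers of $2$), so they contribute nothing to $v$; this legitimizes treating the recursions as identities of polynomials in $x$ and reading off orders of vanishing directly.
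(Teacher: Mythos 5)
Your plan is essentially the paper's own proof: strong induction on $m$ via the recursions for $\Psi_{2m}$ and $\Psi_{2m+1}$, splitting into cases by the parity of $m$, taking the base cases from the table already computed, and handling the denominator $2y$ through $v(2y)=1$ (which follows from $y^2=(x-\lambda_1)(x-\lambda_2)^2$), after which each case reduces to checking that a quadratic in $m$ (or in $s$) dominates the target bound. The only quibble is the garbled aside that $y$ ``does not vanish'' at $\lambda_2$ — it vanishes there to order exactly $1$, which is precisely the fact you then use — but this does not affect the argument.
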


The right hand side of these inequalities is $r+1$ and $r+2$ in the even and odd cases $n = 2m$ and $n= 2m+1$ respectively. The left hand side is $v$ of the denominator of $\varphi_{n,p}$. Thus Lemma \ref{lowerbound} will imply the result if we show that the numerator has at least as high a valuation as well. We now show that this also follows from Lemma \ref{lowerbound}. Our reasoning is valid for $m-1 \geq 2$ in the even case and $m \geq 2$ in the odd case, establishing the result for $n \geq 5$. The result for lower $n$ is given by the above table.

\vspace{.5cm}

\noindent \textbf{Even case} $(n=2m)$:
$$v(x\psi_{2m}^2- \psi_{2m+1}\psi_{2m-1}) \geq \mathrm{min}(2m^2+1, v( \psi_{2m+1}\psi_{2m-1}))$$
and we have
$$v( \psi_{2m+1}\psi_{2m-1}) \geq m^2 +m + 2 + (m-1)^2 + (m-1) + 1 = 2m^2+2$$
yielding a common root of order at least $2m^2+1 = r+1$.

\vspace{.5cm}

\noindent \textbf{Odd case} ($n=2m+1$):
$$v(x\psi_{2m+1}^2- \psi_{2m+2}\psi_{2m}) \geq \mathrm{min}(2v(\psi_{2m+1}), v(\psi{2m+2}) + v(\psi{2m}))$$
and we have
$$2v(\psi_{2m+1}) \geq 2(m^2 + m +1).$$
Then
$$ v(\psi_{2m+2}) + v(\psi_{2m}) \geq \frac{2(m+1)^2 + 1}{2} + \frac{2m^2 + 1}{2} = 2(m^2 + m +1)$$
yielding a common root of order at least $2(m^2+m+1) = r+2$.

\vspace{.5cm}

It remains to prove Lemma~\ref{lowerbound}. We must split into even and odd cases
\begin{eqnarray*}
m &=& 2s, \\
m &=& 2s + 1
\end{eqnarray*}
and verify a number of early cases by consulting the above table in order to get the induction going. We will be doing strong induction applied to values $2 \leq k \leq m$. In the even case we will need $s-1, s, s+1$ to be in this range. In the odd case, we will need $s, s+1, s+2$ to be in this range. Thus we will have to assume $m \geq 5$.  Again, the result for lower indices $m$ is given by the table above. To ease notation, we write $v = v$.

\vspace{.5cm}

Assume (1) and (2) hold for $2 \leq k \leq m$. We want to show

\bigskip

(a) $2v(\psi_{2(m+1)}) \geq 2(m+1)^2 +1$

(b) $2v(\psi_{2(m+1)+1}) \geq 2((m+1)^2 + (m+1) + 1).$

\bigskip

\textbf{Even case} ($m = 2s$)

For (a),
\begin{align*}
2v(\psi_{2m+1}) &= 2v(\frac{\psi_{m+1}(\psi_{m+3}\psi_{m}^2-\psi_{m-1}\psi_{m+2}^2)}{2y}) \\
&\geq 2(v(\psi_{m+1}) + min(v(\psi_{m+3}) + 2v(\psi_m), v(\psi_{m-1})+ 2v(\psi_{m+2})-1) \\
\end{align*}
and
\begin{align*}
2(v(\psi_{m+1}) + v(\psi_{m+3}) + 2v(\psi_m) -1)
&= 2(v(\psi_{2s+1}) + v(\psi_{2s+3}) + 2v(\psi_{2s})-1) \\
&\geq 2(s^2 + s + 1 + (s+1)^2\\ &+ (s+1) + 1 + 2s^2 + 1 - 1) \\
&= 2(4s^2 + 4s + 4)\\
&= 2(m^2 + 2m + 4) \\
&= 2(m+1)^2 + 6 \\
\end{align*}
while
\begin{align*}
2(v(\varphi_{m+1}) + v(\varphi_{m-1}) + 2v(\varphi_{m+2}) - 1) &= 2(v(\varphi_{2s+1}) + v(\varphi_{2s-1}) + 2v(\varphi_{2s+2}) - 1) \\
&\geq 2((s^2 + s + 1) + (s-1)^2\\ &+ (s-1) + 1 + 2(s+1)^2 + 1 - 1)\\
&= 2(m+1)^2 + 6.\\
\end{align*}
This proves (a).

For (b),
\begin{align*}
2v(\varphi_{2(m+1)+1)}) &= 2v(\varphi_{m+3}\varphi_{m+1}^3 - \varphi_{m}\varphi_{m+2}^3) \\
&\geq 2(min(v(\varphi_{m+3}) + 3v(\varphi_{m+1}), v(\varphi_{m}) + 3v(\varphi_{m+2})))
\end{align*}
and
\begin{align*}
2(v(\varphi_{m+3}) + 3v(\varphi_{m+1})) &= 2(v(\varphi_{2s+3}) + 3v(\varphi_{2s+1})) \\
&= 2(4s^2 + 6s + 6)\\
&= 2((m+1)^2 + (m+1) + 4)\\
\end{align*}
while
\begin{align*}
2(v(\varphi_{m}) + 3v(\varphi_{m+2})) &= 2(v(\varphi_{2s}) + 3v(\varphi_{2s+2})) \\
&\geq (2s^2 + 1 + 3(2(s+1)^2 + 1)) \\
&= 2((m+1)^2 + (m+1) + 3).\\
\end{align*}

\textbf{Odd case} ($m = 2s + 1$)

For (a),
\begin{align*}
2v(\psi_{2m+1}) &= 2v(\frac{\psi_{m+1}(\psi_{m+3}\psi_{m}^2-\psi_{m-1}\psi_{m+2}^2)}{2y}) \\
&\geq 2(v(\psi_{m+1}) + min(v(\psi_{m+3}) \\&+ 2v(\psi_m), v(\psi_{m-1})+ 2v(\psi_{m+2})-1) \\
\end{align*}
and
\begin{align*}
2(v(\psi_{m+1}) + v(\psi_{m+3}) + 2v(\psi_m))-1) &= 2(v(\psi_{2s+2}) + v(\psi_{2s+4}) + 2v(\psi_{2s+1}))-1) \\
&\geq (2(s+1)^2 + 1 + 2(s+2)^2 \\&+ 1 + 4(s^2 + s + 1) -2) \\
&= 2(4s^2 + 8s + 7) \\
&= 2(m+1)^2 + 6 \\
\end{align*}
while
\begin{align*}
2(v(\psi_{m+1}) + v(\psi_{m-1}) + 2v(\psi_{m+2}) -1) &= 2(v(\psi_{2s+2}) + v(\psi_{2s}) + 2v(\psi_{2s+3}) -1) \\
&\geq 2(4s^2 + 8s + 7) \\
&= 2(m+1)^2 + 6. \\
\end{align*}
Taking care of (a).

Finally, for (b),
\begin{align*}
2v(\psi_{2(m+1)+1}) &= 2v(\psi_{m+3}\psi_{m+1}^3 - \psi_{m}\psi_{m+2}^3)
\end{align*}
and
\begin{align*}
2v(\psi_{m+3}\psi_{m+1}^3) &= 2v(\psi_{2s+4}\psi_{2s+2}^3) \\
&= (2(s+2)^2 +1 + 3(2(s+1)^2 +1)\\
&= 8s^2 + 20s +17 \\
&= 2((m+1)^2 + (m+1) + 1) + 3
\end{align*}
while
\begin{align*}
2v(\psi_{m}\psi_{m+2}^3) &= 2v(\psi_{2s+1}\psi_{2s+3}^3) \\
&= 2(s^2 + s + 1 + 3((s+1)^2 + (s+1) + 1)) \\
&= 2((m+1)^2 + (m+1) + 4).
\end{align*}

\end{proof}

\section{Building a Counterexample to the Dynamical Analog to Theorem \ref{szpirostheorem}}
In this section, we will prove Example \ref{dynamicalszpirofalse}.

In what follows, by ``a counterexample to a dynamical analog to Theorem \ref{szpirostheorem},'' we mean an infinite collection of maps for which the degree of the conductor is bounded, and for which the degree of the minimal resultant is unbounded.
\subsection{Two Examples}
\subsubsection*{Example 1}

The following example is due originally to Patrick Ingram, who gave it in the number field case (with $t$ replaced by $p$ a prime number, below). It gives a collection of presentations of rational maps with only one point of singular reduction, but of unbounded degree of the resultant for that presentation. Since we do not know the presentation is minimal at the point of singular reduction, this does not give us an immediate counterexample to the dynamical analog of Theorem \ref{szpirostheorem}.

Let $C = \pone_k$, so $K = k(t)$. Consider the polynomial map
$$x^2 + t^{-N}$$
where $N > 0$. Written in projective coordinates, this is
$$\varphi = \frac{X^2 + t^{-N}Y^2}{Y^2}.$$
Let $\Phi = [\a, \b]$ be the associated presentation. We can see immediately that $\Phi$ has good reduction at $\infty$, so that $(R_{[\a,\b]})_{\infty} = 0$. For the reduction at the other points, we'll multiply throughout by $t^N$
$$\varphi =  \frac{t^NX^2 + Y^2}{t^NY^2}.$$
It is then a simple matrix calculation to see that the resultant of this is $t^{4N}$. This means that the only point of singular reduction with respect to this presentation is $t=0$. Notice here that the reduction is unstable, by Theorem~\ref{enhancednumericalcriterion} and $(R_{[\a,\b]})_0 = 4N$. Thus if we could show that this presentation is minimal for $t=0$, we'd have a family of rational maps with the degree of the conductor equal to $1$, and degree of the minimal resultant unbounded in terms of $N$.

In fact, this presentation is not minimal for $t=0$.  At least for $N = 2M$, there is a conjugate that lowers the degree of the resultant by a factor of 4. This is obtained by conjugating by $\left(\begin{array}{cc}1 & 0 \\0 & t^M\end{array}\right)$, yielding
$$\frac{t^{3M}X^2+t^{3M}Y^2}{t^{4M}Y^2} = \frac{X^2+Y^2}{t^MY^2}.$$

The resultant of this is computed to be $t^N$, so the degree of the resultant divisor at $t = 0$ has been lowered. However, it is still unbounded in terms of $N$.

\subsubsection*{Example 2}
\label{example2}
In analogy with hypotheses of Theorem \ref{szpirostheorem}, one might expect that having a semi-stable presentation would be a necessary or helpful condition for formulating and proving a dynamical analog. It is easy to see, however, that Example 1 has unstable reduction at $t=0$, so we don't know that this example qualifies. We are thus led to seek a counterexample to a dynamical analog of Theorem \ref{szpirostheorem}  that also admits a semi-stable presentation.

Suppose now $\varphi$ is a degree $2$ map that can be written in the form:
$$\frac{X^2 + \lambda_1XY}{\lambda_2XY + Y^2}.$$

Rational maps of this form are said to be in \textit{normal form}. In this form, the coefficients $\lambda_1$ and $\lambda_2$ are two of the multipliers of $\varphi$. For the presentations corresponding to such forms, we prove the following necessary and sufficient criterion for semi-stability.
\begin{prop}
\label{degreetwosemistable}
Let $\varphi$ be a morphism of degree 2 over $K$ that can be written in normal form:$$\varphi = \frac{X^2 + \lambda_1XY}{\lambda_2XY + Y^2}.$$ Then the corresponding presentation $\Phi =[1,\lambda_1, 0, 0, \lambda_2, 1]$ is a semi-stable presentation if and only if:
\begin{quote}
{\bf (1)}  any poles of $\lambda_1$ and $\lambda_2$ occur at exactly the same points, where moreover they have the same multiplicity and\\
{\bf (2)}  $\lambda_1$ and $\lambda_2$ never evaluate simultaneously to $1$.
\end{quote}
\end{prop}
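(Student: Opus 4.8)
The plan is to exploit the fact that $\Phi$ is a semi-stable presentation exactly when its reduction $\Phi_p \in \PP^5(\kappa(p))$ lies in $(\PP^5)^{ss}(\kappa(p))$ for every closed point $p$ of $C$, and to verify this pointwise using Theorems \ref{numericalcriterion} and \ref{enhancednumericalcriterion} (here $d=2$, $r=1$, so stability and semi-stability coincide). The first step is to compute the resultant of the evident model $(1,\lambda_1,0,0,\lambda_2,1)$: expanding the $4\times 4$ Sylvester matrix gives $\rho(1,\lambda_1,0,0,\lambda_2,1)=1-\lambda_1\lambda_2$. Since $\Rat_2\subseteq(\PP^5)^{s}\subseteq(\PP^5)^{ss}$, at every $p$ where both $\lambda_1,\lambda_2$ are regular and $\lambda_1\lambda_2(p)\neq 1$ the reduction $\Phi_p=[1,\lambda_1(p),0,0,\lambda_2(p),1]$ lies in $\Rat_2(\kappa(p))$ and is automatically semi-stable. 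Hence only two families of points require analysis: the zeros of $1-\lambda_1\lambda_2$ at which $\lambda_1,\lambda_2$ are both regular, and the poles of $\lambda_1$ or of $\lambda_2$.

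For the first family, fix $p$ with $\lambda_1,\lambda_2$ regular and $\lambda_1(p)\lambda_2(p)=1$. Then $F_{\mathbf a}(p)=X\bigl(X+\lambda_1(p)Y\bigr)$ and, using $\lambda_2(p)=\lambda_1(p)^{-1}$, $F_{\mathbf b}(p)=\lambda_2(p)\,Y\bigl(X+\lambda_1(p)Y\bigr)$, so $[-\lambda_1(p):1]$ is a common root of order exactly $r=1$ and the map obtained by cancellation is $\psi=[X:\lambda_2(p)Y]$. By Theorem \ref{enhancednumericalcriterion}(1), $\Phi_p$ is unstable if and only if this common root is a fixed point of $\psi$; since $\lambda_1(p)\neq 0$ it is neither $[0:1]$ nor $[1:0]$, so it is fixed by $\psi$ precisely when $\psi=\mathrm{id}$, i.e. when $\lambda_2(p)=1$, equivalently $\lambda_1(p)=\lambda_2(p)=1$. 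This isolates condition (2).

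For the second family, at a pole one clears denominators: if $n_p(1,\lambda_1,0,0,\lambda_2,1)=\min(0,v_p(\lambda_1),v_p(\lambda_2))=-m<0$, rescale by $u_p$ with $v_p(u_p)=m$ and reduce. One then reads off $\Phi_p$: the $X^2$-coefficients of both reduced forms vanish, and — according to how the polar orders of $\lambda_1$ and $\lambda_2$ at $p$ compare — either exactly one of the two reduced binary forms is identically zero, in which case $\Phi_p$ lies in (or is $PGL_2$-conjugate into) the destabilizing form of Theorem \ref{numericalcriterion}, or neither vanishes and one applies the numerical criterion to the resulting configuration. The outcome is that $\Phi_p$ is semi-stable at a pole exactly when $\lambda_1$ and $\lambda_2$ have matching polar behavior there, which yields condition (1). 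Combining the two families of points gives the stated equivalence.

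I expect the pole analysis to be the main obstacle. The reductions that occur there are highly degenerate points of $\PP^5$ lying well outside $\Rat_2$, where $F_{\mathbf a}(p)$ and $F_{\mathbf b}(p)$ share a common factor of large degree (or one of them vanishes identically); the enhanced criterion of Theorem \ref{enhancednumericalcriterion} is delicate on such points, so one should work directly with the raw weight criterion of Theorem \ref{numericalcriterion} and track carefully how the rescaling $u_p$ interacts with the vanishing conditions $a_i=0$, $b_j=0$. Bookkeeping the pole orders against the threshold $r=1$ is precisely where the content of condition (1) is extracted, and it is the step that needs the most care.
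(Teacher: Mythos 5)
Your overall strategy is exactly the paper's: reduce pointwise, compute $\rho(1,\lambda_1,0,0,\lambda_2,1)=1-\lambda_1\lambda_2$, and test the two problematic families (zeros of $1-\lambda_1\lambda_2$ with $\lambda_1,\lambda_2$ regular, and poles) against the degree-$2$ instability criteria. Your treatment of the first family is complete and correct, and it matches the paper's: the common root $[-\lambda_1(p):1]$ has order exactly $1$, the cancelled map is $[X:\lambda_2(p)Y]$, and the root is fixed precisely when $\lambda_1(p)=\lambda_2(p)=1$, which isolates condition (2).

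The one genuine gap is the pole analysis, which you assert ("semi-stable at a pole exactly when the polar behaviors match") but do not carry out, and which you yourself flag as the unfinished step. It is in fact short, and the route you propose needs one adjustment. After rescaling by $u_p$ with $v_p(u_p)=m=-\min(v_p(\lambda_1),v_p(\lambda_2))>0$, the reduction is $[0,\overline{u_p\lambda_1},0,0,\overline{u_p\lambda_2},0]$, i.e. $F_{\mathbf a}=aXY$, $F_{\mathbf b}=bXY$ with $(a,b)\neq(0,0)$. If the pole orders differ, exactly one of $a,b$ is zero: when $b=0$ the point already satisfies $a_0=b_0=b_1=0$ and is unstable by Theorem \ref{numericalcriterion} with no conjugation at all, and when $a=0$ the coordinate swap $\Gamma=\left(\begin{array}{cc}0&1\\1&0\end{array}\right)$ puts it in that form; so unequal polar behavior forces instability, giving the necessity of (1). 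The delicate subcase is equal pole orders, $a,b\neq 0$: here your plan to "apply the numerical criterion to the resulting configuration" is not immediate, because Theorem \ref{numericalcriterion} quantifies over all $\mathrm{SL}_2(\bar\kappa(p))$-conjugates and you would have to rule out every one of them. This is where the enhanced criterion (Theorem \ref{enhancednumericalcriterion}), which you deliberately avoided at the poles, is actually the right tool and is unproblematic since neither form vanishes identically: the common roots are $[1:0]$ and $[0:1]$, each of order exactly $1$ (so case 1(a) cannot occur), and cancelling $XY$ leaves the constant map with value $[a:b]$, whose only fixed point is $[a:b]\neq[1:0],[0:1]$; hence case 1(b) cannot occur either and the reduction is semi-stable. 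This is precisely how the paper closes the converse direction at the poles. With that two-case verification inserted, your proposal is a complete proof along the same lines as the paper's.
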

\begin{proof}

By Theorem~\ref{enhancednumericalcriterion}, for singular reduction of a degree $2$ rational map to be unstable, it is necessary and sufficient that a common root showing up in reduction is either of order $2$, or of order $1$ while also being a fixed point of the map obtained after canceling all the common roots.

Suppose now $\Phi$ is a semi-stable presentation. Then if $\lambda_1$ has a pole of higher negative order than $\lambda_2$ at some $p \in C$, in order to take the reduction of $\Phi$ at $p$ we multiply the coefficients throughout by an appropriate power of the uniformizer to cancel this pole. Since the pole of $\lambda_1$ is of higher negative order, this causes the reduced map to look like
$$\frac{aXY}{0}.$$

Thus, when we cancel common factors, the resulting map is the constant map that sends everything to $\infty$. Thus $\infty$ is a fixed point of this map. Since $Y$ was one of the roots that we canceled, this map is unstable. A similar argument applies if $\lambda_2$ has a pole of higher negative order. This shows \textbf{(1)}.

For \textbf{(2)}, we see easily that, if $\lambda_1(p) = \lambda_2(p) = 1$, the reduced map looks like
$$\frac{X^2 + XY}{XY + Y^2}.$$
After canceling $X+Y$, we get
$$\frac{X}{Y}.$$
This is the identity map, and so it has $[-1,1]$, the canceled root, as a fixed point. Thus the reduction is unstable. This shows \textbf{(2)}.

Conversely, if \textbf{(1)} and \textbf{(2)} hold, first note that the resultant of $\Phi$ is $1-\lambda_1\lambda_2$, by a simple matrix calculation. Thus when the product $\lambda_1\lambda_2$ evaluates to $1$, we will have singular reduction. By \textbf{(2)}, the reduction must be of the form
$$\frac{X^2 + aXY}{a^{-1}XY + Y^2}$$
where $a \neq 1$. After cancelation, this is
$$
\frac{aX}{Y}$$
and $[-a, 1]$, the canceled root, is not a fixed point. Hence the reduction is semi-stable.

The other possibility for singular reduction is when some of the coefficients have poles, which in this case, by \textbf{(1)}, can only happen at the common poles of $\lambda_1$ and $\lambda_2$. For these, we see that the reduction is of the form
$$\frac{aXY}{bXY}$$
where $a$ and $b$ are not $0$. Thus after canceling the common roots $X$ and $Y$, we get a constant map that is not $0$ or $\infty$, so that neither $0$ nor $\infty$ is a fixed point, and hence the reduction is semi-stable.
\end{proof}
\begin{cor}
\label{counterexample0}
Suppose $a, b, b' \neq 1$ and let, for each $N \in \Z$ with $N \geq 1$
\begin{eqnarray*}
\lambda_1 &=& a + bt^N \\
\lambda_2 &=& a^{-1} +b't^N.
\end{eqnarray*}
where $a, b, b' \in k$ satisfy $a, b, b' \neq 0, 1$, and $ab'+b/a = 0$. Then $\Phi = [1, \lambda_1, 0, 0, \lambda_2, 1]$ is a semi-stable presentation. In addition, for each $N$, the non-singular reduction locus of $\Phi$ contains 2 points, and the degree of the resultant divisor of $\Phi$ is at least $2N$.
\end{cor}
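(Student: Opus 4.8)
The plan is to read everything off two facts already in hand: the semi-stability criterion of Proposition~\ref{degreetwosemistable}, and the identity $\rho(1,\lambda_1,0,0,\lambda_2,1)=1-\lambda_1\lambda_2$ recorded in its proof.

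\textbf{Semi-stability of $\Phi$.} First I would verify conditions (1) and (2) of Proposition~\ref{degreetwosemistable} for $\Phi=[1,\lambda_1,0,0,\lambda_2,1]$. Since $b,b'\neq 0$, both $\lambda_1=a+bt^N$ and $\lambda_2=a^{-1}+b't^N$ are polynomials in $t$ of degree exactly $N$, so on $C=\pone_k$ each has a single pole, at $\infty$, of multiplicity $N$; this gives (1). For (2), suppose some point $p$ had $\lambda_1(p)=\lambda_2(p)=1$. Then $p$ is a finite point $t=t_0$ with $t_0^N=(1-a)/b$ (from $\lambda_1(t_0)=1$) and $t_0^N=(a-1)/(ab')$ (from $\lambda_2(t_0)=1$); equating and dividing by $1-a\neq 0$ forces $ab'=-b$. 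Combined with the hypothesis $ab'+b/a=0$ this gives $b(1/a-1)=0$, impossible since $b\neq 0$ and $a\neq 1$. Hence no such $p$ exists, (2) holds, and $\Phi$ is a semi-stable presentation.

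\textbf{The resultant divisor of $\Phi$.} Next I would use the hypothesis to simplify $\rho(\Phi)$. Expanding,
\[
\lambda_1\lambda_2=(a+bt^N)(a^{-1}+b't^N)=1+(ab'+b/a)t^N+bb't^{2N}=1+bb't^{2N},
\]
so $\rho(\Phi)=1-\lambda_1\lambda_2=-bb't^{2N}$, a nonzero element of $K$ whose divisor on $C$ is $2N[0]-2N[\infty]$. The model $(1,\lambda_1,0,0,\lambda_2,1)$ has polynomial entries and the entry $a_0=1$ is a unit at every finite point, so $n_p=0$ and $N_{\Phi,p}=v_p(\rho(\Phi))$ there: this equals $2N$ at $p=0$ and $0$ at every other finite point. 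At $\infty$ the minimal valuation of the coefficients is $n_\infty=\min(0,-N,-N,0)=-N$ (the two vanishing coefficients contribute $+\infty$), so $N_{\Phi,\infty}=v_\infty(\rho(\Phi))-2d\,n_\infty=-2N-4(-N)=2N$. Therefore $R_\Phi=2N[0]+2N[\infty]$: its support — the singular reduction locus of $\Phi$ — consists of exactly the two points $0$ and $\infty$, so the non-singular reduction locus is $C$ with those two points removed, and $\deg R_\Phi=4N\geq 2N$, as claimed. (By Theorem~\ref{reductionlocus}, since $\Phi$ is semi-stable the conductor $\mathfrak{f}_\varphi$ then also has support $\{0,\infty\}$, which is what feeds into Example~\ref{dynamicalszpirofalse}.)

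\textbf{Where the content is.} No individual step is hard; the point is that the hypothesis $ab'+b/a=0$ is precisely what kills the $t^N$-coefficient of $\lambda_1\lambda_2$, making $\rho(\Phi)$ a unit times the pure power $t^{2N}$ — this is exactly what keeps the singular reduction locus as small as possible (two points) while letting $\deg R_\Phi$ grow linearly in $N$. The one piece of bookkeeping that must be done carefully is the contribution at $\infty$: one has to pass to an $\infty$-model by scaling the coefficients through by $t^{-N}$ (equivalently, subtract $2d\,n_\infty=4n_\infty$ from $v_\infty(\rho(\Phi))$) before reading off $N_{\Phi,\infty}$, rather than naïvely evaluating the given model there.
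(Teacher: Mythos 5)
Your proof is correct and follows essentially the same route as the paper's: verify conditions (1) and (2) of Proposition~\ref{degreetwosemistable} (the paper dismisses this as ``by construction''; your explicit check is fine), use $ab'+b/a=0$ to get $\rho(\Phi)=1-\lambda_1\lambda_2=-bb't^{2N}$, and read off singular reduction exactly at $t=0$ (multiplicity $2N$) and at the common pole $\infty$. Your exact bookkeeping at $\infty$ ($n_\infty=-N$, so $N_{\Phi,\infty}=2N$ and $R_\Phi=2N[0]+2N[\infty]$) is a slight refinement of the paper, which only records the bound ``at least $2N$,'' and you correctly interpret the statement's ``non-singular reduction locus contains $2$ points'' as the singular reduction locus being precisely $\{0,\infty\}$, which is the content actually used in Example~\ref{dynamicalszpirofalse}.
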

\begin{proof}
The corresponding $\Phi$, by construction, satisfies \textbf{(1)} and \textbf{(2)} and is therefore a semi-stable presentation by Proposition \ref{degreetwosemistable}. The only pole of $\lambda_1, \lambda_2$ is $\infty$, and so we have one point of singular reduction there. The resultant is
 \begin{align*}
 1-\lambda_1\lambda_2 &= 1-(a+bt^N)(a^{-1} + b't^N) \\
 &= -(ab'+b/a)t^N -bb't^{2N} \\
 &= -bb't^{2N}.
 \end{align*}
Thus we have only one other point of singular reduction (at $t=0$), whose multiplicity in the resultant divisor for this presentation is $2N$. Thus the non-singular reduction locus has degree 2, and the degree of the resultant divisor is unbounded in terms of $N$.
\end{proof}

Corollary \ref{counterexample0} implies that the degree of the resultant divisor of a semi-stable presentation cannot be bounded in terms of the conductor. In order for this to be a counterexample to a dynamical analog of Theorem \ref{szpirostheorem}, there is still the question of the minimality of this presentation at the unbounded point.

\subsection{Proving Minimality}

In attempting to show the minimality of these two examples at the unbounded points, one line of attack is to act by the group and then analyze the order of vanishing at $t$ of the coefficients, and apply Proposition~\ref{minimalitycondition}. Doing this for Example 2 leads to the following partial result.

\begin{prop}
The above example is minimal at $t=0$ with respect to conjugations of the form $\left(\begin{array}{cc}\alpha & \beta \\0 & 1\end{array}\right)$ where $v_0(\alpha) > 0$.
\end{prop}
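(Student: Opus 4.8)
The plan is to apply the local minimality criterion of Proposition~\ref{minimalitycondition}, restricted to the stated subfamily of $\PGL_2(K)$ (this restriction being harmless, exactly as observed in the proof of Proposition~\ref{semistableminimal}). Write $\Phi = [\a,\b]$ with the model $(\a,\b) = \big((1,\lambda_1,0),(0,\lambda_2,1)\big)$; since $a_0 = 1$ is a unit in $\mathcal{O}_{C,0}$, this is a $0$-model and $n_0(\a,\b) = 0$. For $\Gamma = \left(\begin{array}{cc}\alpha & \beta \\ 0 & 1\end{array}\right)$ we have $\det\Gamma = \alpha$, so $v_0(\det[\Gamma]) = v_0(\alpha) > 0$, and by Proposition~\ref{minimalitycondition} with $d=2$ it suffices to show that every such $\Gamma$ satisfies
\[
n_0(\a^{\Gamma},\b^{\Gamma}) \;\le\; \tfrac{3}{2}\,v_0(\alpha).
\]

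First I would compute the conjugate model — and only its first polynomial is actually needed. By the action formula, $(X,Y)\cdot\Gamma = (\alpha X + \beta Y,\,Y)$ and $\Gamma^{\mathrm{adj}} = \left(\begin{array}{cc}1 & -\beta \\ 0 & \alpha\end{array}\right)$; since $\Gamma^{\mathrm{adj}}$ is upper triangular with $(1,1)$-entry $1$, the polynomial $F_{\a^{\Gamma}}$ is simply $F_{\a}(\alpha X + \beta Y,\,Y) = (\alpha X + \beta Y)^2 + \lambda_1(\alpha X + \beta Y)Y$, with no contribution from $F_{\b}$. Expanding, the coefficients of $F_{\a^{\Gamma}}$ are $\big(\alpha^2,\ \alpha(2\beta + \lambda_1),\ \beta(\beta + \lambda_1)\big)$. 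As $n_0(\a^{\Gamma},\b^{\Gamma})$ is the minimum of the valuations of all six coefficients of the conjugate model, it is bounded above by $\min\{\,2v_0(\alpha),\ v_0(\alpha)+v_0(2\beta+\lambda_1),\ v_0(\beta)+v_0(\beta+\lambda_1)\,\}$.

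The one structural fact to exploit is that $\lambda_1 = a + bt^N$ with $a \neq 0$, so $\lambda_1$ is a unit at $t = 0$ with $\lambda_1(0) = a$; hence $2\beta + \lambda_1$ and $\beta + \lambda_1$ cannot both vanish at $t = 0$. If $v_0(2\beta + \lambda_1) = 0$, then the middle coefficient $\alpha(2\beta + \lambda_1)$ has valuation exactly $v_0(\alpha)$, so $n_0(\a^{\Gamma},\b^{\Gamma}) \le v_0(\alpha)$. If instead $v_0(2\beta + \lambda_1) > 0$, then $\beta(0) = -a/2 \neq 0$, whence $v_0(\beta) = 0$ and $(\beta + \lambda_1)(0) = a/2 \neq 0$, so $\beta(\beta + \lambda_1)$ is a unit and $n_0(\a^{\Gamma},\b^{\Gamma}) = 0$. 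Either way $n_0(\a^{\Gamma},\b^{\Gamma}) \le v_0(\alpha) \le \tfrac{3}{2}v_0(\alpha)$, as required.

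The argument is essentially computational, and I do not expect a real obstacle beyond organizing a couple of points. One should check that $(\a,\b)$ as written is genuinely a $0$-model, so that Proposition~\ref{minimalitycondition} applies verbatim. And if one wishes to allow conjugating matrices $\Gamma$ of the given shape whose entry $\beta$ has a pole at $t = 0$ (so that $\Gamma$ itself is not a $0$-model), one first rescales to the $0$-model $t^{-v_0(\beta)}\Gamma$; since the group action is homogeneous of degree $d+1$ in the entries of the matrix, both $v_0(\det[\,\cdot\,])$ and $n_0$ of the conjugate shift in a way that leaves the target inequality in the equivalent form $2\,n_0(\a^{\Gamma},\b^{\Gamma}) \le 3\,v_0(\alpha)$, and now $\beta(\beta + \lambda_1)$ has negative valuation, so the left-hand side is negative and the inequality holds trivially.
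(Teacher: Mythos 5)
Your overall strategy (reduce to the inequality $n_0(\a^{\Gamma},\b^{\Gamma}) \le \tfrac{3}{2}v_0(\alpha)$ of Proposition~\ref{minimalitycondition} and then bound $n_0$ by exhibiting one coefficient of small valuation) is the right one, but the coefficient computation underneath it is wrong, and the error is not cosmetic. The paper's action applies $\Gamma^{\mathrm{adj}}$ to the output pair in the same way that $\Gamma$ is applied to $(X,Y)$, i.e.\ $(F_1,F_2)\mapsto(\delta F_1-\beta F_2,\,-\gamma F_1+\alpha F_2)$; this is what post-composition by $\Gamma^{-1}$ (conjugation) must be, and it is what the paper's displayed formula (\ref{conjugated}) records. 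So for $\Gamma=\left(\begin{array}{cc}\alpha & \beta\\ 0 & 1\end{array}\right)$ the first polynomial of the conjugate is $F_{\a}(\alpha X+\beta Y,Y)-\beta F_{\b}(\alpha X+\beta Y,Y)$, with coefficients $\alpha^2$, $\alpha(2\beta+\lambda_1-\beta\lambda_2)$, $\beta(\beta+\lambda_1-\lambda_2\beta-1)$ — not your $\alpha^2$, $\alpha(2\beta+\lambda_1)$, $\beta(\beta+\lambda_1)$, which come from reading ``$(F_1,F_2)\cdot\Gamma^{\mathrm{adj}}$'' as a row vector times a matrix and would correspond to post-composing by $(\Gamma^{\mathrm{adj}})^{T}$ rather than by $\Gamma^{\mathrm{adj}}$. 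Your dichotomy (``$2\beta+\lambda_1$ and $\beta+\lambda_1$ cannot both vanish at $0$'') therefore bounds $n_0$ of the wrong object.

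A tell-tale sign that the shortcut cannot be repaired without redoing the computation is that your argument never uses $a\neq 1$, only $a\neq 0$, while the statement is false when $a=1$: there $b'=-b$, so $\lambda_1+\lambda_2=2$, and conjugating by $\left(\begin{array}{cc}t^{m} & -1\\ 0 & 1\end{array}\right)$ makes both lower numerator coefficients $\alpha(2\beta+\lambda_1-\beta\lambda_2)$ and $\beta(\beta+\lambda_1-\lambda_2\beta-1)$ vanish identically, leaving $n_0=\min(2m,m+N)$, which exceeds $\tfrac{3}{2}m$ for $N<m<2N$. With the correct coefficients the argument necessarily involves the denominator coefficient $\alpha(\lambda_2\beta+1)$ (or, equivalently, the pair of numerator coefficients above) and the hypothesis $a\neq 0,1$: assuming all relevant coefficients have valuation exceeding $v_0(\alpha)$ forces, at $t=0$, the simultaneous equations $2\beta_0+a-a^{-1}\beta_0=0$ and $\beta_0+a-a^{-1}\beta_0-1=0$, whence $\beta_0=-1$ and $a+a^{-1}=2$, i.e.\ $a=1$, a contradiction — which is exactly the computation the paper carries out. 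Your final remarks (checking that $(1,\lambda_1,0,0,\lambda_2,1)$ is a $0$-model, and rescaling $\Gamma$ when $\beta$ has a pole) are fine, but the core step needs to be redone with the correct conjugated coefficients.
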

\begin{proof}
We apply the criteria of Proposition~\ref{minimalitycondition}, but with respect to the restricted subset of of $\mathrm{GL}_2(K)$ stated in the theorem. First, by the inner substitution
\begin{align*}
\frac{X^2 + \lambda_1XY}{\lambda_2XY + Y^2} \rightarrow &  \frac{(\alpha X+\beta Y)^2 + \lambda_2(\alpha X+\beta Y)Y}{\lambda_2(\alpha X+\beta Y)Y + Y^2} \\
&= \frac{\alpha^2X^2 + (2\alpha\beta + \lambda_1\alpha)XY + (\beta^2 + \lambda_1 \beta)Y^2}{\lambda_2 \alpha XY +(\lambda_2 \beta +1)Y^2}.
\end{align*}
Now performing the outer substitution yields
\begin{align}
\label{conjugated}
\frac{\alpha^2X^2 + (2\alpha\beta + \lambda_1 \alpha - \beta\lambda_2\alpha)XY + (\beta^2 + \lambda_1\beta - \lambda_2\beta^2 - \beta)Y^2}{\lambda_2\alpha^2XY + (\alpha\lambda_2\beta + \alpha)Y^2}.
\end{align}
Suppose minimality fails. This means we can find $\alpha,\beta$ such that minimal valuation at $0$ of the coefficients of (\ref{conjugated}), which we will call $n$, satisfies
$$\frac{n}{v_0(\alpha)} > \frac{3}{2}.$$
This will certainly imply
$$n > v_0(\alpha).$$
This imposes three restrictions
\begin{quote}
{\bf (a)}  $v_0(\alpha\lambda_2\beta + \alpha) > v_0(\alpha),$\\
{\bf (b)}  $v_0(2\alpha\beta + \lambda_1\alpha -\beta\lambda_2\alpha) > v_0(\alpha),$\\
{\bf (c)}  $v_0(\beta(\beta+ \lambda_1-\lambda_2\beta -1)) > v_0(\alpha).$
\end{quote}
Factoring $\alpha$ in (a) and (b), and observing that by assumption $v_0(\alpha) > 0$ in 3, we reduce these to
\begin{quote}
{\bf (a)}  $v_0(\lambda_2\beta + 1) > 0,$\\
{\bf (b)}  $v_0(2\beta + \lambda_1 -\beta\lambda_2) > 0,$\\
{\bf (c)}  $v_0(\beta(\beta+ \lambda_1-\lambda_2\beta -1)) > 0.$
\end{quote}
Now, since $\lambda_2$ doesn't vanish at $0$, (a) implies that $v_0(\beta) = 0$, and so we already have a contradiction if $v_0(\beta) \neq 0$, proving minimality among those. If $v_0(\beta) = 0$ then (c) reduces to
\begin{align}
\label{A}
v_0(\beta + \lambda_1 - \lambda_2\beta -1) > 0.
\end{align}
Let $\beta_0$ be the evaluation of $\beta$ at 0. Then it follows from (\ref{A}) and (b) respectively that
\begin{align}
\label{B}
\beta_0 + a -a^{-1}\beta_0 -1 = 0
&\\ 2\beta_0 + a -a^{-1}\beta_0 = 0.
\end{align}
Subtracting yields
$$-\beta_0-1 = 0,$$
so $\beta_0 = -1$. But then by (\ref{B}), $-2+a+a^{-1} = 0$, which implies $a=1$, a contradiction.
\end{proof}

The brute force methods used in the above proof--using the basic properties of valuations in an effort to find a contradiction--appear to be insufficient to test for minimality in general. But a different approach may be taken. We are able to show the minimality of the above example via a minimality criteria that is proven using the $PGL_2$ invariance of the symmetric functions of the periodic points of the multipliers of a rational map.

For a given $\varphi$, consider the periodic points of a fixed period $n$. Let $\sigma_{n,i}(\varphi)$ be the $i$-th symmetric function in the multipliers of these periodic points. Each $\sigma_{n,i}(\varphi)$ is $PGL_2$ invariant and thus depends only on $\varphi$ and not on the presentation and model we choose to represent it. In  \cite{SilvermanDynamics}, it is shown further that each $\sigma_{n,i}(\varphi)$ can be written in terms of the coefficients of any model $(\a,\b)$ of a presentation $\Phi$ of $\varphi$ via an expression of the following form:
\begin{align}
\label{sigmain}
\sigma_{n,i} &= \frac{P(\a,\b)}{(\rho(\a,\b))^m}
\end{align}
where $m \geq 0$ is an integer and $P(\a,\b)$ is a homogeneous polynomial of degree $2dm$ (so that the fraction is degree zero).

\begin{prop}
\label{DegreeTwoMinimalCriterion}
Let $\Phi = [\a, \b]$ be a presentation of a degree $d$ rational map $\varphi$ over a field $K$. Let $p \in C$ be a point of singular reduction, and suppose that $(\a,\b)$ is a $p$-model of $\Phi$. Let $P(\a,\b)$ and $\sigma_{n,i}(\varphi)$ be as above, and suppose $m \geq 1$. If $v_p(P(\a,\b))= 0$, then $(R_{\varphi})_p = (R_{\Phi})_p$ (i.e. $R_{\Phi}$ is minimal at $p$).
\end{prop}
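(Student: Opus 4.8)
The plan is to show directly that $N_{\Phi,p} \le N_{\Phi^\Gamma,p}$ for every $\Gamma \in \mathrm{PGL}_2(K)$; since the reverse inequality $\epsilon_p(\varphi) \le N_{\Phi,p}$ is automatic from the definition of $\epsilon_p$, this forces $(R_\Phi)_p = N_{\Phi,p} = \epsilon_p(\varphi) = (R_\varphi)_p$, which is exactly the claim. The engine of the argument is that the $\mathrm{PGL}_2(K)$-invariant element $\sigma_{n,i}(\varphi) \in K$ pins down $N_{\Phi,p}$ through its $p$-adic valuation, independently of the chosen presentation, precisely because of the hypothesis $v_p(P(\a,\b)) = 0$.

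First I would evaluate $v_p(\sigma_{n,i}(\varphi))$ from the given $p$-model $(\a,\b)$ of $\Phi$. Since $(\a,\b)$ is a $p$-model, $n_p(\a,\b) = 0$, so $N_{\Phi,p} = v_p(\rho(\a,\b))$; applying $v_p$ to the formula $(\ref{sigmain})$ and using $v_p(P(\a,\b)) = 0$ gives $v_p(\sigma_{n,i}(\varphi)) = v_p(P(\a,\b)) - m\,v_p(\rho(\a,\b)) = -m\,N_{\Phi,p}$. Next, for an arbitrary $[\Gamma] \in \mathrm{PGL}_2(K)$, I would choose a $p$-model $(\a',\b')$ of $\Phi^\Gamma$ (scale $(\a^\Gamma,\b^\Gamma)$ by a suitable power of a uniformizer, as in the remark following the definition of $n_p$), so that $N_{\Phi^\Gamma,p} = v_p(\rho(\a',\b'))$. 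The formula $(\ref{sigmain})$ applies equally to this model, since $\sigma_{n,i}(\varphi)$ is intrinsic to $\varphi$; and because the coordinates of $(\a',\b')$ lie in $\OCp$ and $P$ has integer coefficients, $v_p(P(\a',\b')) \ge 0$, whence $v_p(\sigma_{n,i}(\varphi)) = v_p(P(\a',\b')) - m\,v_p(\rho(\a',\b')) \ge -m\,N_{\Phi^\Gamma,p}$. Comparing the two evaluations gives $-m\,N_{\Phi,p} \ge -m\,N_{\Phi^\Gamma,p}$, and dividing by $-m$ (legitimate since $m \ge 1$) yields $N_{\Phi,p} \le N_{\Phi^\Gamma,p}$. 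As $[\Gamma]$ was arbitrary, $N_{\Phi,p} = \epsilon_p(\varphi)$. Everything here rests only on Proposition~\ref{valuationformulas} and the invariance and homogeneity already recorded in $(\ref{sigmain})$; one could instead route the final inequality through Proposition~\ref{minimalitycondition}, but the direct computation is shorter.

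The one point I would be careful about is integrality: the argument needs $v_p(P(\a',\b')) \ge 0$ for every $p$-model $(\a',\b')$, which holds provided the polynomial $P$ in $(\ref{sigmain})$ can be taken with coefficients in $\OCp$ (integer coefficients suffice), and this is what the construction of the $\sigma_{n,i}$ provides. If $P$ carried a $p$-power in its denominators the bound would weaken; since it does not, there is no obstacle, and the remainder is routine bookkeeping of valuations. Note also that the hypothesis $v_p(P(\a,\b)) = 0$ is genuinely needed for the \emph{given} model — only there do we use an equality rather than the inequality $\ge 0$ — so the proposition would fail without it.
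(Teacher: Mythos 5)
Your proposal is correct and follows essentially the same route as the paper's own proof: use the $p$-model of $\Phi$ plus $v_p(P(\a,\b))=0$ to pin $v_p(\sigma_{n,i}(\varphi))=-m\,N_{\Phi,p}$, then for any conjugate pass to a $p$-model $(\a',\b')$, use $v_p(P(\a',\b'))\ge 0$ by integrality, and compare via the $\mathrm{PGL}_2(K)$-invariance of $\sigma_{n,i}$ to get $N_{\Phi,p}\le N_{\Phi^{\Gamma},p}$. The only cosmetic difference is that the paper first takes a $p$-model representative of $\Gamma$ and then scales the resulting coefficients, whereas you scale $(\a^{\Gamma},\b^{\Gamma})$ directly; the valuation bookkeeping is identical.
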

\begin{proof}
Recall that, in general, $(R_{\Phi})_p$ is simply the valuation of $\rho(u_p\a,u_p\b) \in K$, where $u_p$ is chosen so that $(u_p\a,u_p\b)$ is a $p$-model of $\Phi$. Since we have assumed this already holds, we can take $u_p = 1$, i.e.
\begin{align}
\label{S}
S := (R_{\Phi})_p =v_p(\rho(\a,\b)).
\end{align}

Since $(\a,\b)$ is a $p$ model, all of the coefficients of $(\a,\b)$ are in $\OCp$. Let $[\Gamma] \in PGL_2(K)$, where the representative $\Gamma = \left(\begin{array}{cc}\alpha & \beta \\\gamma & \delta\end{array}\right)$ is a $p$-model of $[\Gamma]$. Let $(\a'', \b'')$ be the new coefficients obtained via the action of $\Gamma$ on $(\a,\b)$. Viewed on the affine cone, this action is pre-composition by $\Gamma$ and post-composition by the adjoint of $\Gamma$ (it descends to the conjugation action when we pass to projective space). Because of this, the new coefficients $(\a'', \b'')$ are in $\OCp$.

We now cancel the greatest common power of the uniformizer occurring in each coefficient; this gives us new coefficients $(\a',\b')$, still all in $\OCp$, and now $(\a',\b')$ is a $p$-model of $[\a^{\Gamma},\b^{\Gamma}]$. Now we have that

\begin{align}
\label{S'}
S' :=(R_{\Phi^{\Gamma}})_p = v_p(\rho(\a',\b')).
\end{align}
We need to show that $S \leq S'$. Now, by assumption, $v_p(P(\a,\b)) = 0$. This implies, by (\ref{sigmain}) and (\ref{S}), that  $v_p(\sigma_1(\varphi)) = -Sm$. Let $r = v_p(P(\a',\b'))$. Since the coefficients $(\a',\b')$ are in $\OCp$, we have that $r \geq 0$. Thus by (\ref{sigmain}), (\ref{S'}), and $PGL_2(K)$ invariance:
\begin{align*}
-Sm &= v_p(\sigma_1(\varphi^{\Gamma})) \\
&= v_p\left(\frac{P(\a',\b')}{\rho(\a',\b')} \right)\\
&= r-S'm \\
&\geq -S'm
\end{align*}
Hence $S \leq S'$.
\end{proof}

\begin{cor}
\label{counterexample}
Let $K = k(t)$. Let $\varphi$ be the degree 2 morphism given by: $$\frac{X^2 + \lambda_1XY}{\lambda_2XY + Y^2}$$ where $\lambda_1 = a+bt^N$, $\lambda_2 = a^{-1} + b't^N$, $a, b, b' \neq 0,1$, and $ab'+b/a = 0$. Let $\Phi$ be the corresponding presentation. Then $(R_{\Phi})_0 = (R_{\varphi})_0$ (i.e. $R_{\Phi}$ is minimal at $ 0 \in \mathbb{A}^1$).
\end{cor}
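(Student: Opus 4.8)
The plan is to deduce this from Proposition~\ref{DegreeTwoMinimalCriterion}, applied with $n=i=1$ and the symmetric function $\sigma_{1,1}(\varphi)$, the sum of the multipliers of the fixed points of $\varphi$. First I would verify the hypotheses of that proposition at $p=0$. The model $(\a,\b)=(1,\lambda_1,0,0,\lambda_2,1)$ has all coefficients in the local ring at $t=0$, since $\lambda_1=a+bt^N$ and $\lambda_2=a^{-1}+b't^N$ lie in $k[t]$, and its leading coefficient is the unit $1$; thus $(\a,\b)$ is a $0$-model of $\Phi$. Moreover $\rho(\a,\b)=1-\lambda_1\lambda_2=-bb't^{2N}$ by the computation in Corollary~\ref{counterexample0}, so $N_{\Phi,0}=2N>0$ and $0$ is a point of singular reduction of $\Phi$.

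Second I would compute $\sigma_{1,1}(\varphi)$ explicitly. In the affine coordinate $z=X/Y$ we have $\varphi(z)=(z^2+\lambda_1 z)/(\lambda_2 z+1)$, whose fixed points are $z=0$, $z=\infty$, and $z_0=(\lambda_1-1)/(\lambda_2-1)$. Differentiating gives multiplier $\lambda_1$ at $0$; passing to the coordinate $w=1/z$ (in which $\varphi$ takes the same normal form with $\lambda_1$ and $\lambda_2$ interchanged) gives multiplier $\lambda_2$ at $\infty$; and for the third fixed point, either a short manipulation using the fixed-point identities $\lambda_2 z_0+1=z_0+\lambda_1$ and $\lambda_2 z_0^2=z_0^2+(\lambda_1-1)z_0$, or the holomorphic fixed-point formula $\sum_i (1-\mu_i)^{-1}=1$, gives multiplier $(\lambda_1+\lambda_2-2)/(\lambda_1\lambda_2-1)$. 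Summing,
$$\sigma_{1,1}(\varphi)=\lambda_1+\lambda_2+\frac{\lambda_1+\lambda_2-2}{\lambda_1\lambda_2-1}=\frac{\lambda_1\lambda_2(\lambda_1+\lambda_2)-2}{\lambda_1\lambda_2-1}.$$

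Third, I would invoke the fact, from the degree-$2$ computations in \cite{SilvermanDynamics}, that for $d=2$ the exponent $m$ in the representation $\sigma_{1,1}=P(\a,\b)/\rho(\a,\b)^m$ of~(\ref{sigmain}) equals $1$. Hence, since $\rho(\a,\b)=1-\lambda_1\lambda_2$ for our model, $P(\a,\b)=\sigma_{1,1}(\varphi)\cdot\rho(\a,\b)=2-\lambda_1\lambda_2(\lambda_1+\lambda_2)$ as an element of $K=k(t)$. Evaluating at $t=0$, where $\lambda_1(0)=a$ and $\lambda_2(0)=a^{-1}$, gives $P(\a,\b)|_{t=0}=2-(a+a^{-1})=-(a-1)^2/a$, which is nonzero because $a\neq 0,1$. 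Thus $v_0(P(\a,\b))=0$, all hypotheses of Proposition~\ref{DegreeTwoMinimalCriterion} hold, and we conclude $(R_\varphi)_0=(R_\Phi)_0$.

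The main obstacle is the third step. The whole argument is useful only because $m=1$: for $m\geq 2$ one would instead have $v_0(P(\a,\b))=2N(m-1)>0$, since $\rho(\a,\b)$ vanishes to order $2N$ at $t=0$, and the criterion would not apply. So confirming $m=1$ in degree $2$, together with the accompanying clean closed form for the third multiplier, is the one nonroutine ingredient; the rest — verifying that $(\a,\b)$ is a $0$-model with singular reduction there, and the final evaluation (which is where the hypothesis $ab'+b/a=0$ enters, forcing $\rho$ to vanish at $t=0$ rather than being a unit) — is elementary.
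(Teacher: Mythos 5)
Your proposal is correct and follows essentially the same route as the paper: both apply Proposition~\ref{DegreeTwoMinimalCriterion} with $\sigma_1$ and reduce to checking that $P(\a,\b)=2-\lambda_1^2\lambda_2-\lambda_1\lambda_2^2$ does not vanish at $t=0$, where its value $2-a-a^{-1}$ is nonzero since $a\neq 1$. Your computation of $P$ via the three fixed-point multipliers (rather than plugging into Silverman's tabulated formula, whose denominator is indeed the degree-$2$ resultant, so $m=1$) is exactly the alternative calculation the paper itself mentions and carries out in Theorem~\ref{degreetwominimal}.
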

\begin{proof}
The symmetric function $\sigma_1(\varphi)$ is simply $$\lambda_{P_1} + \lambda_{P_2} +\lambda_{P_3},$$
where the $\lambda_{P_i}$ are the multipliers of the fixed points $P_i$ of $\varphi$. Silverman provides, in \cite{SilvermanDynamics}, the coefficients for the general formula for $\sigma_1$ mentioned above, and this makes it easy for us to calculate $\sigma_1(\varphi)$ (alternatively, we could make the calculation as in Theorem \ref{degreetwominimal} below). The formula is as follows. First set

\begin{eqnarray*}
P(\a,\b) = & a_1^3b_0 - 4a_0a_1a_2b_0-6a_2^2b_0^2-a_0a_1^2b_1+4a_1a_2b_0b_1 \\ & -2a_0a_2b_1^2+a_2b_1^3-2a_1^2b_0b_2 + 4a_0a_2b_0b_2 \\ & -4a_2b_0b_1b_2 -a_1b_1^2b_2+2a_0^2b_2^2+4a_1b_0b_2^2
\end{eqnarray*}

Then we have

\begin{align}
\label{sigma1}
\sigma_1(\varphi) = \frac{P(\a,\b)}{a_2^2b_0^2-a_1a_2b_0b_1+a_0a_2b_1^2 + a_1^2b_0b_2 - 2a_0a_2b_0b_2-a_0a_1b_1b_2+a_0^2b_2^2}.
\end{align}

Since the given model is a $p$-model for $p=0 \in \mathbb{A}^1$, all we must show is that the form $P(\a,\b)$ doesn't vanish at $t=0$ for these coefficients. Thus we must show that the constant term is nonzero; this is a simple calculation by plugging in to the expression for $P(\a, \b)$ above. Most of the terms vanish, and we are left with the expression:

$$-(a+bt^N)^2(a^{-1} + b't^N) - (a+bt^N)(a^{-1}+b't^N)^2 + 2.$$

The constant term of this expression is $-a-a^{-1}+2$, which can never be zero, by the assumption that $a \neq 1$.
\end{proof}

By generalizing the above proof, we can show that semi-stable presentations of degree two maps in normal form are minimal at certain places.
\begin{thm}
\label{degreetwominimal}
Let $\Phi$ be a normal form presentation of a degree $2$ morphism $\varphi$: $$\varphi = \frac{X^2 + \lambda_1XY}{\lambda_2XY + Y^2}.$$ Suppose $\Phi$ has everywhere semi-stable reduction, and let $P$ be the common poles of $\lambda_1$ and $\lambda_2$, as described in Proposition \ref{degreetwosemistable}. Then $(1,\lambda_1, 0, 0, \lambda_2, 1)$ is a $P$-minimal global model for $\Phi$.
\end{thm}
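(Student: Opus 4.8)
The goal is to show that for a semi-stable normal form presentation $\Phi = [1,\lambda_1,0,0,\lambda_2,1]$, the model $(1,\lambda_1,0,0,\lambda_2,1)$ realizes the minimal resultant at every point of $U = C\setminus P$, where $P$ is the set of common poles of $\lambda_1,\lambda_2$. The strategy is to combine Proposition \ref{DegreeTwoMinimalCriterion} with the explicit formula \eqref{sigma1} for $\sigma_1(\varphi)$: at a point $p \in U$ of singular reduction, we will show that the numerator $P(\a,\b)$ of $\sigma_1$, evaluated at our model, does \emph{not} vanish at $p$, which by Proposition \ref{DegreeTwoMinimalCriterion} forces $(R_\Phi)_p = (R_\varphi)_p$. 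At points of non-singular reduction in $U$ the equality $(R_\Phi)_p = 0 = (R_\varphi)_p$ is automatic, so only singular reduction points matter.

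First I would verify that on $U$ our model is a $p$-model for each $p \in U$: since $P$ collects \emph{all} poles of both $\lambda_1$ and $\lambda_2$ (this is exactly condition (1) of Proposition \ref{degreetwosemistable}), for $p\notin P$ all coefficients $1,\lambda_1,0,0,\lambda_2,1$ lie in $\mathcal{O}_{C,p}$, and the coefficient $1$ is a unit, so $n_p = 0$. Next, I would identify the singular reduction points in $U$: by the matrix computation already recorded in Proposition \ref{degreetwosemistable}, the resultant of this model is $1 - \lambda_1\lambda_2$, so singular reduction at $p\in U$ means $(\lambda_1\lambda_2)(p) = 1$. The key computation is then to plug $a_0 = 1$, $a_1 = \lambda_1$, $a_2 = 0$, $b_0 = 0$, $b_1 = \lambda_2$, $b_2 = 1$ into the formula for $P(\a,\b)$ from Corollary \ref{counterexample}. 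As in that corollary, most monomials contain a factor $a_2 = b_0 = 0$ and drop out; what survives is (up to sign) $-\lambda_1^2\lambda_2 - \lambda_1\lambda_2^2 + 2 = -\lambda_1\lambda_2(\lambda_1+\lambda_2) + 2$. Evaluating at $p$ and using $(\lambda_1\lambda_2)(p) = 1$, this becomes $2 - (\lambda_1(p)+\lambda_2(p))$.

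So the crux is: at a singular reduction point $p\in U$ we must rule out $\lambda_1(p)+\lambda_2(p) = 2$. Since $\lambda_1(p)\lambda_2(p) = 1$ already holds, the pair $(\lambda_1(p),\lambda_2(p))$ would then be roots of $T^2 - 2T + 1 = (T-1)^2$, forcing $\lambda_1(p) = \lambda_2(p) = 1$. But that is exactly the configuration excluded by condition (2) of Proposition \ref{degreetwosemistable} — semi-stability of $\Phi$ means $\lambda_1$ and $\lambda_2$ never simultaneously evaluate to $1$. Hence $v_p(P(\a,\b)) = 0$ at every singular reduction point in $U$, and Proposition \ref{DegreeTwoMinimalCriterion} (with $d=2$, $m=1$, noting $m\geq 1$ because the denominator of \eqref{sigma1} is genuinely nonconstant) gives $(R_\varphi)_p = (R_\Phi)_p = v_p(1-\lambda_1\lambda_2)$ for all $p\in U$; combined with the non-singular points this says precisely that $(1,\lambda_1,0,0,\lambda_2,1)$ is a $P$-minimal global model.

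I expect the main obstacle to be entirely bookkeeping rather than conceptual: confirming that the surviving terms of $P(\a,\b)$ are exactly $-\lambda_1^2\lambda_2 - \lambda_1\lambda_2^2 + 2$ (the constant-term computation in Corollary \ref{counterexample} did this only after specializing $\lambda_1,\lambda_2$ to the specific polynomials, so one must redo it with $\lambda_1,\lambda_2$ general), and double-checking that $m=1$ rather than $m=0$ so that Proposition \ref{DegreeTwoMinimalCriterion} applies — the displayed denominator in \eqref{sigma1} is visibly a nontrivial degree-$4$ form, so $m=1$. One should also note that this argument handles only singular reduction \emph{away from} $P$; the behavior at the poles $P$ is exactly what a $P$-minimal global model is allowed to ignore, so no further work is needed there.
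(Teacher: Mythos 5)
Your proposal is correct and follows essentially the same route as the paper: both reduce to Proposition \ref{DegreeTwoMinimalCriterion} by showing that $\mathrm{Res}(\Phi)\sigma_1 = -\lambda_1^2\lambda_2-\lambda_1\lambda_2^2+2$ does not vanish at a singular reduction point away from $P$, which is ruled out by condition (2) of Proposition \ref{degreetwosemistable}. The only (inessential) difference is that you obtain this expression by substituting the normal-form coefficients into the explicit formula for $P(\a,\b)$, while the paper derives it from the multiplier identity $\sigma_1=\sigma_3+2$.
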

\begin{proof}
The singular reduction of $\Phi$ occurs either at the common poles of $\lambda_1$ and $\lambda_2$, or where $\lambda_1\lambda_2$ evaluates to $1$. We must show that in the latter case $(R_{\Phi})_p$ is minimal. Let $p$ be such a point of singular reduction. It is known (see \cite{SilvermanDynamics}) that in general $\sigma_1 = \sigma_3 + 2$. Thus if we let $\lambda_3$ be the third multiplier of $\varphi$ (in normal form, the first two multipliers are $\lambda_1$ and $\lambda_2$), we have:
$$\sigma_1 = \lambda_1+\lambda_2+\lambda_3 = \lambda_1\lambda_2\lambda_3 +2. $$

Thus $$\lambda_3 = \frac{2-\lambda_1-\lambda_2}{1 - \lambda_1\lambda_2}$$

so that $$\sigma_1 = \lambda_1 + \lambda_2 +  \frac{2-\lambda_1-\lambda_2}{1 - \lambda_1\lambda_2}.$$

From this we can calculate $P(\a,\b)$ directly in terms of $\lambda_1$ and $\lambda_2$:

\begin{align*}
\mathrm{Res}(\Phi)\sigma_1 &= (1-\lambda_1\lambda_2)(\lambda_1 + \lambda_2 +  \frac{2-\lambda_1-\lambda_2}{1 - \lambda_1\lambda_2}) \\
&= -\lambda_1^2\lambda_2 - \lambda_1\lambda_2^2 + 2.
\end{align*}

Let $a = \lambda_1(p)$. Then since the resultant vanishes at $p$, $\lambda_2(p) = a^{-1}$. Further, we see from Proposition \ref{degreetwosemistable} that $a \neq 1$. Hence we have:
$$-\lambda_1(p)^2\lambda_2(p) - \lambda_1(p)\lambda_2(p)^2 + 2 = -a-a^{-1} +2 \neq 0.$$

By Proposition \ref{DegreeTwoMinimalCriterion}, $R_{\Phi}$ is minimal at $p$.
\end{proof}

Using a slightly different approach, we now show that, in the function field situation, semi-stability implies minimality for all degree two maps.
\begin{thm}
\label{ssminimaldegreetwo}
Let $K$ be a function field over an algebraically closed field $k$. Let $\varphi$ be a morphism of degree two, and let $\Phi$ be a presentation of $\varphi$. Let $p \in C$. If the reduction of $\Phi$ at $p$ is semi-stable, then $(R_{\varphi})_p = (R_{\Phi})_p$. 

\end{thm}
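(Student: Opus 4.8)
The plan is to derive the statement from Proposition~\ref{DegreeTwoMinimalCriterion}, applied not only to $\sigma_1$ but also to the second symmetric function $\sigma_2$ of the fixed-point multipliers. First I would dispose of the trivial case: if $\Phi$ has non-singular reduction at $p$ then $(R_\Phi)_p = 0$, and since $R_\varphi$ is effective with $(R_\varphi)_p \le (R_\Phi)_p$, also $(R_\varphi)_p = 0$. So assume $\Phi$ has singular reduction at $p$; fix a $p$-model $(\a,\b)$ of $\Phi$ and let $[\bar\a,\bar\b] \in \PP^5(\kappa(p))$ be its reduction, which is semi-stable by hypothesis and satisfies $\rho(\bar\a,\bar\b) = 0$ because the reduction is singular.

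For $d = 2$ one has $\sigma_1 = P_1/\rho$ and $\sigma_2 = P_2/\rho$, where $P_1$ is the degree-$4$ form recorded in Corollary~\ref{counterexample} and $P_2$ is the analogous numerator in Silverman's formula, so that in the notation of $(\ref{sigmain})$ the exponent is $m = 1$ in both cases. The crucial claim is: \emph{if $[\bar\a,\bar\b]$ is semi-stable and $\rho(\bar\a,\bar\b) = 0$, then $P_1(\bar\a,\bar\b) \ne 0$ or $P_2(\bar\a,\bar\b) \ne 0$.} Granting this, since $(\a,\b)$ is a $p$-model we have $v_p(P_j(\a,\b)) = 0$ for $j = 1$ or $j = 2$, and Proposition~\ref{DegreeTwoMinimalCriterion}, applied to $\sigma_j$, yields $(R_\varphi)_p = (R_\Phi)_p$.

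Proving the crucial claim is where I expect the work to lie, and I would do it through the classification of semi-stable singular reductions furnished by Theorem~\ref{enhancednumericalcriterion}. Such a reduction has $F_{\a}$ and $F_{\b}$ sharing a common root of order exactly $1$, and there are two cases. If the map $\psi$ obtained after cancellation has degree $1$ and the shared root is not a fixed point of $\psi$, then after an $\SL_2(\kappa(p))$-conjugation carrying that root to $[1,0]$ (so that $\bar a_0 = \bar b_0 = 0$) all terms but two in the formula from Corollary~\ref{counterexample} vanish, leaving $P_1(\bar\a,\bar\b) = \bar b_1^2(\bar a_2\bar b_1 - \bar a_1\bar b_2)$; the two semi-stability conditions say precisely that $\bar b_1 \ne 0$ and that $\psi$ is genuinely of degree $1$, i.e. $\bar a_1\bar b_2 - \bar a_2\bar b_1 \ne 0$, so $P_1(\bar\a,\bar\b) \ne 0$. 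The remaining case --- the one not covered by the normal-form argument of Theorem~\ref{degreetwominimal} --- is the ``constant reduction'' case: $F_{\a}$ and $F_{\b}$ reduce to proportional quadratics whose common factor has two distinct roots, neither equal to the constant value of $\psi$. Here $P_1(\bar\a,\bar\b) = 0$, and what must be shown is that $P_2(\bar\a,\bar\b) \ne 0$. I would extract this from Silverman's explicit expression for $\sigma_2$, cross-checking against the normal-form presentation at a common pole of $\lambda_1$ and $\lambda_2$, where a short computation gives $v_p(\sigma_2) = -2m$ and $(R_\Phi)_p = 2m$, hence $v_p(P_2) = 0$ on a $p$-model, consistent with $P_2$ being nonvanishing on such a reduction. (Conceptually, the claim amounts to saying that $\rho, P_1, P_2$ cut out only the null-cone of $\PP^5$ under $\SL_2$, which fits the classical invariant theory of pairs of binary quadratics.) This constant-reduction verification is the genuine obstacle; everything else is formal once Proposition~\ref{DegreeTwoMinimalCriterion} is in hand.
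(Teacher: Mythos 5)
Your overall strategy is the same as the paper's: reduce everything to Proposition~\ref{DegreeTwoMinimalCriterion} applied to the pair $\sigma_1,\sigma_2$, and show that at a semi-stable singular reduction the two numerators $P_1,P_2$ cannot both vanish. The first half of your verification is fine: in the case where the cancelled map $\psi$ has degree $1$, your computation $P_1(\bar\a,\bar\b)=\bar b_1^2(\bar a_2\bar b_1-\bar a_1\bar b_2)$ after moving the common root to $[1,0]$ is correct, and the implicit use of $\SL_2(\kappa(p))$-invariance of $P_1$ is legitimate. But the remaining case is exactly where you stop: for the ``constant reduction'' case you only offer to ``extract this from Silverman's explicit expression for $\sigma_2$'' and a consistency check in one special family (the common-pole reduction of the normal form), which does not establish $P_2(\bar\a,\bar\b)\neq 0$ for a general constant reduction. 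Since you yourself identify this as ``the genuine obstacle,'' the proposal as written has a real gap precisely at the step that carries the content of the theorem. A second, smaller issue: your dichotomy, read off from Theorem~\ref{enhancednumericalcriterion}, tacitly assumes both reduced forms are nonzero; a presentation of a genuine degree-two map can reduce to a point with $F_{\bar\a}\equiv 0$ (for instance $[t,0,t,0,1,1]$ at $t=0$), and some such points are semi-stable, so they must be covered as well.

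The gap is fillable, and the paper closes it by a slightly different bookkeeping: it quotes Lemma~6.2 of Silverman's paper on the space of rational maps to conjugate the reduction, by a matrix with entries in $k$ (using Lemma~\ref{reductioncommutes} and Proposition~\ref{valuationformulas}(5) so that neither $N_{\Phi,p}$ nor the reduction is disturbed), into the boundary normal form $[0,A,0,0,1,B]$ with $(A,B)\neq(0,0)$; the explicit formulas for $\rho\sigma_1$ and $\rho\sigma_2$ then evaluate there to $-AB$ and $-A^2-B^2$, which cannot vanish simultaneously. In your language this amounts to the short computation you skipped: conjugating the two distinct common roots to $[1,0]$ and $[0,1]$ puts a constant reduction in the form $[0,A,0,0,1,0]$ (or $[0,0,0,0,1,B]$ when $F_{\bar\a}\equiv 0$), where $P_1=0$ but $P_2=-A^2-B^2\neq 0$. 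So your route would work once you actually write down Silverman's $\sigma_2$ numerator and evaluate it, and once the degenerate $F_{\bar\a}\equiv 0$ reductions are folded into the case analysis; as submitted, the proof is incomplete at its crucial point.
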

\begin{proof}
First we need a lemma.

\begin{lemma}
\label{reductioncommutes}
Let $K$ a function field over an algebraically closed field $k$. Let $\varphi$ be a morphism, $\Phi \in \Ratd(K)$ a presentation of $\varphi$, and $p \in C$. Let $\Gamma \in GL_2(k)$. Then $(\a^{\Gamma},\b^{\Gamma})$ is a $p$-model of $\Phi^{\Gamma}$ and $(\Phi^{\Gamma})_p = \Phi_p^{\Gamma}$.
\end{lemma}

\begin{proof}
Let $(\a,\b)$ be a $p$-model of $\Phi$. Each coefficient of  $(\a^{\Gamma}, \b^{\Gamma})$ is just a polynomial in $\alpha, \beta, \gamma, \delta, a_0, \cdots, a_d, b_o, \cdots, b_d$, and so it is clear that $(\a^{\Gamma}(p), \b^{\Gamma}(p)) = (\a(p)^{\Gamma}, \b(p)^{\Gamma})$. Since the right hand side descends to a point of projective space, $(\a^{\Gamma}, \b^{\Gamma})$ must be a $p$-model of $\Phi^{\Gamma}$, and we have the desired equality.
\end{proof}
Let now $(\a,\b)$ be a $p$-model of our $\Phi$. It follows from Lemma 6.2 of \cite{SilvermanSpace} that we may find a $\Gamma \in GL_2(k)$ such that $\Phi_p^{\Gamma} = [0,A,0,0,1,B]$ where $A, B$ are not both zero. Since $\Gamma \in GL_2(\OCp)$, it follows from Proposition \ref{valuationformulas} (5) that it suffices to show minimality of $\Phi^{\Gamma}$.  

We have formulas for $\rho(\a,\b)\sigma_1(\varphi)$ and $\rho(\a,\b)\sigma_2(\varphi)$ (the former is given above and both appear together on p.17 of \cite{SilvermanSpace}) and they are polynomials in the coefficients $(\a,\b)$. Applying these formulas to the coefficients $(\a^{\Gamma}, \b^{\Gamma})$, we may show the minimality of $\Phi^{\Gamma}$ at $p$ by showing that at least one does not evaluate to zero at $p$, by Proposition \ref{DegreeTwoMinimalCriterion}.

By Lemma \ref{reductioncommutes}, we know that $(\a^{\Gamma}(p), \b^{\Gamma}(p)) = (0, A, 0, 0, 1, B)$. Plugging in thus yields: \begin{enumerate}

\item $\rho(\a^{\Gamma},\b^{\Gamma})\sigma_1(\varphi)(p) = -AB$
\item $\rho(\a^{\Gamma},\b^{\Gamma})\sigma_2(\varphi)(p) = -A^2-B^2.$
\end{enumerate}
Clearly (1), (2) cannot simultaneously be zero, and so we are done.
\end{proof}
The above argument relies on the fact that the residue field of $p$ is embedded in $\OCp$, which fails, for example, in the number field situation.

\subsection{The Critical Conductor}

In \cite{Szpiro4}, the first author and T. Tucker propose an alternative definition of good reduction, called \textbf{critical good reduction}, which is further studied in \cite{CPT}. Here we show that it is at least possible that the complement of the locus of the critical good reduction increases without bound for Example 2 above, implying that this example is not necessarily a counterexample to the dynamical analog of Theorem \ref{szpirostheorem} under this alternative definition of the conductor.  Note that C. Petsche, in \cite{Petsche}, gives another definition of conductor on a modified space.

Following \cite{Szpiro4}, Let $\varphi$ be a morphism of degree at least 2 over $K$.  Let $[X,Y]$ be coordinates on $\pone_{K}$. Let $R(\varphi)$ be the ramification divisor of $\varphi$ over the algebraic closure $\bar{K}$ of $K$, and let $K'$ be an extension of $K$ such that the points in the support of the ramification divisor are in $\mathbb{P}^1_{K'}$. Let $C'$ be the corresponding curve. Then for $P \in \pone_{K'}$, we can define the reduction of $P$ at $p \in C'$ by choosing coefficients $(\alpha, \beta) \in \mathcal{O}_{C',p}^2$ with at least one coefficient a unit, such that $[\alpha,\beta] = P$, and then evaluating these coefficients: $$r_p(P) := [\alpha(p), \beta(p)].$$
\begin{definition} With the notations and assumptions as above, $\varphi$ has \textbf{critical good reduction} at $p \in C$ if:
\begin{enumerate}
\item If $P, Q \in \mathrm{Supp}(R(\varphi))$ and $P \neq Q$, then $r_p(P) \neq r_p(Q)$.
\item If $P, Q \in \varphi(\mathrm{Supp}(R(\varphi)))$ and $P \neq Q$, then $r_p(P) \neq r_p(Q)$.
\end{enumerate}
If these conditions are not satisfied, then $\varphi$ has \textbf{critical bad reduction} at $p$.
\end{definition}
\begin{definition} \label{criticalconductor}
The \textbf{critical conductor} of $\varphi$ is the divisor $$\mathfrak{f}_{cr} = \sum_{p \in S} [p]$$ where $S$ is the set of critical bad reduction.
\end{definition}
This definition depends on the choice of coordinates on $\pone_{K}$. What we will now show is that, with respect to the coordinates $[X,Y]$ in Example 2 above, $\varphi$ has many (unbounded in terms of $N$) points of critical bad reduction.
\begin{prop}
\label{critlarge}
Let $\varphi$ be given by: $$\varphi = \frac{X^2 + \lambda_1XY}{\lambda_2XY + Y^2}$$ where $\lambda_1 = a+bt^N$, $\lambda_2 = a^{-1} + b't^N$, $a, b, b' \neq 0,1$, and $ab'+b/a = 0$. Then, with respect to the coordinates $[X,Y]$, $\varphi$ has at least $N + 1$ points of critical bad reduction.
\end{prop}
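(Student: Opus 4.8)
The plan is to make the ramification structure of $\varphi$ completely explicit and then read off which distinct critical points and critical values become identified under reduction. Writing $\varphi$ in the homogeneous form $[X^{2}+\lambda_1 XY,\ \lambda_2 XY+Y^{2}]$, its Jacobian (the Wronskian of the pair) is $2(\lambda_2 X^{2} + 2XY + \lambda_1 Y^{2})$, so the ramification divisor $R(\varphi)$ is the degree-$2$ divisor cut out by $\lambda_2 x^{2} + 2x + \lambda_1$ in the affine coordinate $x = X/Y$. Using the hypothesis $ab' + b/a = 0$, i.e. $b = -a^{2} b'$, a short computation gives $1 - \lambda_1\lambda_2 = -bb' t^{2N} = (ab')^{2} t^{2N}$, which is a square in $k[t]$; hence both roots of the quadratic are $K$-rational, so in the notation of the definition $K' = K$ and $C' = C = \mathbb{P}^{1}_k$, and, setting $c := ab'$ and noting $\lambda_2 = a^{-1}(1+ct^{N})$, the critical points come out to the constant section $P_- = -a$ and the non-constant section $P_+ = -a\,\dfrac{1-ct^{N}}{1+ct^{N}}$. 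A short computation (at a critical point $P$ one has $\lambda_1 = -\lambda_2 P^{2}-2P$, hence $\varphi(P)=-P^{2}$) then gives the two critical values $Q_- = -a^{2}$ and $Q_+ = -a^{2}\Big(\dfrac{1-ct^{N}}{1+ct^{N}}\Big)^{2}$.

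Next I would examine the reductions. At $p = 0$ both $P_+$ and $P_-$ specialize to $-a$, so two distinct points of $\mathrm{Supp}(R(\varphi))$ have the same reduction and $\varphi$ has critical bad reduction at $0$. Now let $t_1,\dots,t_N \in k$ be the roots of $1 + ct^{N}$; since $c\neq 0$ and $\mathrm{char}\,k = 0$ these are $N$ distinct, simple, nonzero points, and they are precisely the zeros of $\lambda_2$. At each $t_i$ we have $ct_i^{N} = -1$, so the numerators $-a(1-ct^{N})$ and $-a^{2}(1-ct^{N})^{2}$ are units there while $1+ct^{N}$ vanishes; hence both the critical point $P_+$ and the critical value $Q_+$ reduce to $[1,0]$ at $t_i$, whereas $r_{t_i}(P_-)=-a$ and $r_{t_i}(Q_-)=-a^{2}$ are finite and, because $a\neq 0,1$, distinct from each other and from $[1,0]$. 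So at $t_i$ the reduction map fails to be injective on the critical points and critical values of $\varphi$ — a critical point and a critical value collide — so $\varphi$ has critical bad reduction at $t_i$. Finally $0\neq t_i$ for all $i$ (again as $c\neq 0$), so $\{0,t_1,\dots,t_N\}$ consists of $N+1$ distinct points of critical bad reduction, which proves the proposition; one can also check that $p=\infty$ is a further such point, though it is not needed.

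The main obstacle is bookkeeping rather than anything conceptual: one must solve $\lambda_2 x^{2}+2x+\lambda_1$ cleanly after substituting the relation among $a,b,b'$, confirm that the discriminant $1-\lambda_1\lambda_2$ is a square in $k[t]$ so that no base extension is required, and then track, coordinate by coordinate, which homogeneous coordinate of each of the sections $P_\pm$ and $Q_\pm$ is a unit at $0$ and at the $t_i$. The one phenomenon worth isolating is the mechanism producing the $N$ extra bad points: the single non-constant critical point $P_+$ is a degree-$N$ cover of the base whose polar locus is exactly $\{\lambda_2 = 0\}$, and over that locus $P_+$ and its image $\varphi(P_+)$ are both forced to $[1,0]$, which is precisely a failure of critical good reduction.
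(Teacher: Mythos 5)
Your explicit computations are all correct, and in fact more precise than what appears in the paper: with $b=-a^{2}b'$ the discriminant $1-\lambda_1\lambda_2=(ab't^{N})^{2}$ is a square in $K$, the ramification points are $P_-=-a$ and $P_+=-a\frac{1-ct^{N}}{1+ct^{N}}$ with $c=ab'$, the branch points are $-a^{2}$ and $-a^{2}\bigl(\frac{1-ct^{N}}{1+ct^{N}}\bigr)^{2}$, at $t=0$ the two ramification points collide (a genuine failure of condition (1)), and your reductions at the zeros $t_i$ of $\lambda_2$ are right: $P_+$ and $Q_+=\varphi(P_+)$ reduce to $[1,0]$ while $P_-$, $Q_-$ reduce to $-a$, $-a^{2}$. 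The gap is in the final inference at the $t_i$. The definition of critical good reduction used in the paper asks only that reduction be injective on $\mathrm{Supp}(R(\varphi))$ and, separately, injective on $\varphi(\mathrm{Supp}(R(\varphi)))$; a collision between a critical \emph{point} and a critical \emph{value} violates neither condition. At each $t_i$ the critical points reduce to the two distinct points $-a$ and $[1,0]$, and the critical values reduce to the two distinct points $-a^{2}$ and $[1,0]$, so conditions (1) and (2) both hold there, and your argument does not establish critical bad reduction at $t_i$ under the stated definition. It would only do so under a stronger definition requiring injectivity of reduction on the union $\mathrm{Supp}(R(\varphi))\cup\varphi(\mathrm{Supp}(R(\varphi)))$.

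For comparison, the paper's proof obtains the $N$ extra places differently: it writes both ramification points over the common denominator $a^{-1}+b't^{N}$ and asserts that they specialize to the \emph{same} point wherever that denominator vanishes, i.e.\ it claims a collision within $\mathrm{Supp}(R(\varphi))$ at the $t_i$. Your formulas show this assertion is problematic: one of the two ramification points is the constant $-a$ (its numerator $-1-ab't^{N}$ vanishes exactly where $\lambda_2$ does), so after cancelling, at the $t_i$ only the non-constant ramification point specializes to $[1,0]$ and the two reductions are $-a$ and $[1,0]$, which are distinct. So your proposal and the paper diverge precisely at the step that is supposed to produce the $N$ additional points, and neither argument, as written, closes it under the definition given in the paper: to repair your version you would need either to work with (and justify) the union form of the definition of critical good reduction, or to exhibit genuine collisions within the set of critical points or within the set of critical values at the $t_i$.
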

\begin{proof}
A simple derivative calculation shows that the ramification divisor is defined over $K$ and consists of exactly two points:
$$P= \frac{-1+i\sqrt{bb'}t^N}{a^{-1}+b't^N}, Q=\frac{-1-i\sqrt{bb'}t^N}{a^{-1}+b't^N}.$$
$P$ and $Q$ will specialize to the same point over $\pone_k$ when $t = 0$, and also when the (common) denominator of these two fractions evaluates to $0$. Since $a^{-1}$ and $b'$ are not zero, the equation $a^{-1}+b't^N$ has $N$ distinct roots, yielding an additional $N$ points of critical bad reduction.
\end{proof}

Combining this with Example \ref{dynamicalszpirofalse}, we have:

\begin{eg}
\label{alltogether}
For each $N \in \Z^{+}$, let $\varphi$ be the degree 2 morphism given by: $$\varphi = \frac{X^2 + \lambda_1XY}{\lambda_2XY + Y^2}$$ where $\lambda_1 = a+bt^N$, $\lambda_2 = a^{-1} + b't^N$, $a, b, b' \neq 0,1$, and $ab'+b/a = 0$. Then the degree of the conductor of $\varphi$ is at most 2, and the degree of the minimal resultant is $2N$, and the degree of the critical conductor is at least N+1.
\end{eg}

\subsection{The Number Field Case} In the number field setting, the standard notion of the degree of a divisor is the norm of the associated ideal, made additive by applying $\log$. Thus the degree of the prime divisor $\frak{p}$ on $\mathrm{Spec}(O_K)$ is $f\log p$ where $f$ is the residue degree of $\frak{p}$ over the prime integer $p$. 

Examples 1 and 2 above can be given instead for number fields if we replace the variable $t$ by a prime $p \in \Z$. It is then possible to construct a similar counter example, in the number field setting, to the dynamical analog of theorem \ref{szpirostheorem}.

\begin{eg}
\label{counterexample0numberfield}
Let $K$ be a number field of degree $n$ over $\Q$, let $p \in \Z$ be prime, and let $\{\frak{p}_i\}$ be the primes of $\mathcal{O}_K$ lying over $p$. Suppose $a, b, b'$ are units of $\mathcal{O}_K$ and let, for each $N \in \Z$ with $N \geq 1$
\begin{eqnarray*}
\lambda_1 &=& a + bp^N \\
\lambda_2 &=& a^{-1} +b'p^N.
\end{eqnarray*}
where $a, b, b' \in K$ also satisfy $a, b, b' \neq 0, 1$, and $ab'+b/a = 0$. Then for each $N$, the degree of the conductor is at most $n(\mathrm{log}p)$, and the degree of the minimal resultant divisor is $2nN \log p$.
\end{eg}
\begin{proof}
Let $e_i$ and $f_i$ be the ramification indexes and residue degrees of the $\frak{p}_i$ respectively. We will use the basic fact that \begin{align}\label{ef}\sum_i e_if_i =n.
\end{align}

The presentation given is a $\frak{q}$ model for every $\frak{q} \in \mathrm{Spec}(\mathcal{O}_K)$. The resultant for the given presentation, as before, is $bb'p^N$, and since $b$ and $b'$ are units, we therefore have the singular reduction occurring exactly at the primes $\frak{p}_i$ dividing $p$. The degree of the conductor is therefore $\sum_i f_i \log p$ (the $f_i$ being the residue degrees of the $\frak{p}_i$), which is at most $n(\mathrm{log}p)$ by (\ref{ef}). For each $i$, $v_{\frak{p}_i}(bb'p^N) = 2Ne_i$, which, by (\ref{ef}) again, gives the desired degree for the resultant divisor of this presentation. 

It follows from Theorem \ref{degreetwominimal} and the assumption that $a \neq 1$ that the given presentation is minimal at each $\frak{p}_i$, and so we are done.
\end{proof}

A concrete example realizing the conditions of Example \ref{counterexample0numberfield} is the following: Let $K =Q(\zeta)$, where $\zeta$ is a primitive $m$-th root of unity for some $m$. Then take $a=\zeta, b = -\zeta,$ and $b'=\zeta^{m-1}$. 

The question of what happens to the critical bad reduction for this example, in the number field case, is interesting. It  turns out  that if we assume the following conjectural version of Theorem \ref{szpirostheorem} for number fields we can show that the critical bad reduction for this example grows with $N$.

\begin{conj} (Discriminant Conjecture) Let E be an elliptic curve over a number field K of discriminant $D(K)$, let  $\Delta (E)$ be the norm of the minimal discriminant of E, and $\mathfrak{f}(E)$ the norm of its conductor. Then for every  $\epsilon > 0$ one has : $$ \Delta(E) \leqslant _\epsilon (D(K) \mathfrak{f}(E))^{6+\epsilon}.$$
 \end{conj}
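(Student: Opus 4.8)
Szpiro's discriminant conjecture, displayed above, is open, and in its \emph{modified} form $\max(|c_4|^3,|c_6|^2)\ll_\epsilon(D(K)\mathfrak f(E))^{6+\epsilon}$ it is known to be equivalent to the $abc$ conjecture over $K$, via the Frey curves attached to $abc$-triples. So the only thing one can realistically propose is the standard \emph{conditional} derivation. I would assume the $abc$ conjecture over number fields in an effective, field-dependent form following Masser and Vojta: for pairwise coprime integral ideals with $a+b+c=0$ in $\mathcal O_K$, one has $\max(|a|,|b|,|c|)\ll_{\epsilon,[K:\mathbb Q]}\bigl(D(K)\cdot N_K(\mathrm{rad}(abc))\bigr)^{1+\epsilon}$.

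First I would reduce to the case in which $E$ has semistable reduction everywhere. A global quadratic twist by some $d$ supported on the additive primes of $E$ and the primes above $6$, followed by a base change to a field $L$ with $[L:K]\mid 12$, does this. Twisting scales $(c_4,c_6,\Delta)$ by $(d^2,d^3,d^6)$ and the conductor by a divisor of $\mathrm{rad}(d)$, while the base change alters $D(\cdot)$, $\mathfrak f(\cdot)$, $\Delta(\cdot)$ only by factors that are bounded powers of the (bounded) degree and of radicals of ramified primes. Since $d$ and all the primes involved divide a bounded power of $\mathfrak f(E)$ times the primes above $6$, every such cost has the shape $(D(K)\mathfrak f(E))^{o(1)}$ and is absorbed into the $\epsilon$ at the end. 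For a semistable $E$ with global minimal invariants one has $\mathrm{rad}(\Delta)=\mathfrak f(E)$ as ideals, and the common divisor $g$ of $(c_4^3)$, $(c_6^2)$, $(\Delta)$ is supported only above $2$ and $3$, so it has bounded norm.

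Next I would apply $abc$ to the minimal-model identity $c_4^3-c_6^2=1728\,\Delta$. Dividing by $g$ to get a coprime triple and invoking the conjecture gives $\max(|c_4|^3,|c_6|^2,|\Delta|)\ll_\epsilon\bigl(D(K)\cdot N_K(\mathrm{rad}(c_4c_6\Delta))\bigr)^{1+\epsilon}$. I would then bound the radical crudely, $\mathrm{rad}(c_4c_6\Delta)\mid\mathrm{rad}(c_4)\,\mathrm{rad}(c_6)\,\mathrm{rad}(\Delta)$, use $N_K(\mathrm{rad}(m))\le|m|$ together with $\mathrm{rad}(\Delta)=\mathfrak f(E)$, and set $X=\max(|c_4|^3,|c_6|^2,|\Delta|)$, so that $|c_4|\le X^{1/3}$ and $|c_6|\le X^{1/2}$. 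The inequality then becomes $X\ll_\epsilon\bigl(D(K)\,\mathfrak f(E)\bigr)^{1+\epsilon}\,X^{\frac56(1+\epsilon)}$; since $1-\frac56(1+\epsilon)>0$ for small $\epsilon$, solving for $X$ yields $X\ll_\epsilon(D(K)\mathfrak f(E))^{6+\epsilon'}$ with $\epsilon'\to 0$ as $\epsilon\to 0$, whence $\Delta(E)\le X\ll_\epsilon(D(K)\mathfrak f(E))^{6+\epsilon}$. The exponent $6$ is exactly $\bigl(1-\tfrac13-\tfrac12\bigr)^{-1}$, reflecting that $c_4$ enters cubically, $c_6$ quadratically, and $\Delta$ linearly.

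The hard part is the $abc$ conjecture used in the second step: it is the entire content and is itself a notorious open problem, while everything around it is routine bookkeeping with Tate's algorithm, Ogg's formula, and elementary size estimates. The one genuinely delicate point in that bookkeeping is uniformity in the base field: one must make certain that the implied constant, the contribution of the primes above $6$ (both the factor $g$ and the gap between $\mathrm{rad}(\Delta)$ and $\mathfrak f(E)$ prior to reducing to the semistable case), and the cost of the twist and base change in the first step are all genuinely absorbed into a clean $D(K)^\epsilon$ and do not secretly carry an unbounded dependence on $[K:\mathbb Q]$ — which is precisely why the field-dependent version of $abc$, rather than the bare qualitative statement, is the one I would invoke.
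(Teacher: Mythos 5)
The statement you were asked about is labeled a \emph{Conjecture} in the paper and is not proved there: the authors only remark that it is closely related to the $abc$ conjecture applied to elliptic curves of the form $y^2=x(x-A)(x+B)$. Your proposal correctly treats it as open and gives the standard conditional derivation from a field-dependent $abc$ conjecture (semistable reduction, the identity $c_4^3-c_6^2=1728\Delta$, and the exponent $6=(1-\tfrac13-\tfrac12)^{-1}$), which is precisely the relationship the paper alludes to, so there is nothing to fault beyond the acknowledged fact that this is not an unconditional proof.
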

This conjecture of the first author is closely related to the ABC conjecture when  applied to elliptic curves of the form: $$y^2=x(x-A)(x+B)$$(cf. \cite{Goldfeld}).
 
 \begin{prop}
 \label{critnumberfield}
 Under the conditions of Example \ref{counterexample0numberfield}, the discriminant conjecture implies that the norm of the critical conductor is bounded below by a linear increasing function of $p^N$.
 \end{prop}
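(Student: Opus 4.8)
The plan is to recognize the left side of the critical conductor as, essentially, the radical of a value of a linear form in $p^N$, and then to control that radical by feeding an associated Frey--Hellegouarch curve into the Discriminant Conjecture, in the spirit of the connection between that conjecture and ABC recalled above. First I would locate the critical bad reduction of $\varphi$, arguing over $\mathcal{O}_K$ exactly as in the proof of Proposition~\ref{critlarge}: from $ab'+b/a=0$ one has $b'=-b/a^2$, so $\lambda_2=a^{-1}+b'p^N=a^{-1}\bigl(1-(b/a)p^N\bigr)$, and the two ramification points of $\varphi$ have common denominator $\lambda_2$; hence $\varphi$ has critical bad reduction at every prime $\mathfrak{q}\mid(p)$ and at every prime $\mathfrak{q}\mid(\lambda_2)$. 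Writing $\delta_N:=1-(b/a)p^N\in\mathcal{O}_K$, so that $(\lambda_2)=(\delta_N)$ because $a\in\mathcal{O}_K^\times$, this gives
$$N(\mathfrak{f}_{cr})\ \ge\ N\Bigl(\prod_{\mathfrak{q}\mid(\delta_N)}\mathfrak{q}\Bigr)\ =\ N\bigl(\mathrm{rad}(\delta_N)\bigr),$$
so it suffices to show that the Discriminant Conjecture forces $N(\mathrm{rad}(\delta_N))$ to exceed every prescribed linear function of $p^N$ once $N$ is large.

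Next I would introduce $E_N:\ y^2=x\bigl(x-\delta_N\bigr)\bigl(x+(b/a)p^N\bigr)$ over $K$. Since $\delta_N+(b/a)p^N=1$, the roots $0,\delta_N,-(b/a)p^N$ of the cubic are distinct, and $E_N$ is an elliptic curve with multiplicative reduction at each prime $\mathfrak{q}\nmid 6$ dividing $\delta_N$ or $p$ (using that $(\delta_N)$ and $(p)$ are coprime and $b/a$ is a unit). Its naive discriminant is $16(b/a)^2\delta_N^{\,2}p^{2N}$ and $c_4(E_N)=16\bigl(\delta_N^{\,2}+\delta_N(b/a)p^N+(b/a)^2p^{2N}\bigr)$. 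One checks that $v_\mathfrak{q}(c_4(E_N))$ is bounded independently of $N$ at every bad prime (it is $0$ at $\mathfrak{q}\nmid 6$ dividing $\delta_N p$ and equals $v_\mathfrak{q}(16)$ at $\mathfrak{q}\mid 2$), whence $v_\mathfrak{q}(\Delta)-v_\mathfrak{q}(\Delta_{\min})\le 3\,v_\mathfrak{q}(c_4(E_N))$ is bounded in terms of $K$ only; combined with $|N(\delta_N)|\asymp_K p^{nN}$ (which holds because $b/a$ is a unit), this yields $N(\Delta_{\min}(E_N))\gg_K |N(\delta_N)|^2 p^{2nN}\gg_K p^{4nN}$. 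On the other side, the bad primes of $E_N$ divide $6\,\delta_N\,p$, with conductor exponent $1$ at the multiplicative primes and bounded in terms of $K$ at the primes over $6$, so $N(\mathfrak{f}(E_N))\ll_K p^{\,n}\,N(\mathrm{rad}(\delta_N))$.

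Now the Discriminant Conjecture $N(\Delta_{\min}(E_N))\ll_\epsilon\bigl(D(K)\,N(\mathfrak{f}(E_N))\bigr)^{6+\epsilon}$ gives $p^{4nN}\ll_{K,\epsilon} p^{\,n(6+\epsilon)}\,N(\mathrm{rad}(\delta_N))^{6+\epsilon}$, hence $N(\mathrm{rad}(\delta_N))\gg_{K,\epsilon}(p^N)^{4n/(6+\epsilon)}$. Finally, the hypotheses of Example~\ref{counterexample0numberfield} are unsatisfiable over $\mathbb{Q}$ — the only units there are $\pm1$, forcing $a=b=b'=-1$ and then $ab'+b/a=2\neq0$ — so $n=[K:\mathbb{Q}]\ge 2$, and choosing $\epsilon<2$ makes the exponent $4n/(6+\epsilon)>1$. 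Therefore $N(\mathfrak{f}_{cr})\ge N(\mathrm{rad}(\delta_N))\gg (p^N)^{1+\eta}$ for some $\eta>0$, which eventually dominates any linear function of $p^N$; the finitely many small $N$ are absorbed into the implied constant, and the proposition follows.

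I expect the main obstacle to be the discriminant bookkeeping in the second step, specifically the passage from the naive to the minimal discriminant of $E_N$ at the primes over $2$ and $3$, where both $v_\mathfrak{q}(\Delta)$ and $v_\mathfrak{q}(\delta_N)$ may grow with $N$. What rescues the estimate is the standard Frey-curve phenomenon that $v_\mathfrak{q}(c_4(E_N))$ stays bounded at all bad primes, so the loss in $v_\mathfrak{q}(\Delta)$ upon minimalizing is bounded; verifying this, together with the delicate but classical bound on the conductor exponent of $E_N$ over $2$ and $3$, is the technical heart of the argument, and the strict inequality $4n/(6+\epsilon)>1$ is exactly the slack that makes the conclusion robust to these bounded losses.
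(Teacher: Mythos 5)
Your proposal is correct and follows essentially the same route as the paper: it locates the critical bad reduction at the primes dividing $p$ and $\lambda_2$ via the computation of Proposition~\ref{critlarge}, and then feeds the Frey curve attached to $\delta_N+(b/a)p^N=1$ (a rescaling of the paper's curve $y^2=x(x-a^{-1})(x+b'p^N)$) into the Discriminant Conjecture to force the radical of $(\lambda_2)$, hence the critical conductor, to grow with $p^N$. Your bookkeeping is in fact more careful than the paper's --- retaining the $\delta_N^{\,2}$ factor in the minimal discriminant via the bounded-$c_4$ argument and observing $[K:\Q]\ge 2$ are exactly what push the exponent above $1$ and deliver a genuinely linear lower bound in $p^N$, a point the published proof passes over.
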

  
 \begin{proof}
 Let $\epsilon>0$, and let $C_\epsilon$ be the implied constant in the discriminant conjecture. The same calculation as given in Proposition \ref{critlarge} shows the critical bad reduction occurs at the usual bad reduction of the given presentation $\Phi$ of $\varphi$, and also where $\lambda_2= a^{-1}+b'p^N$ vanishes.  The elliptic curve with equation:$$y^2=x(x-a^{-1})( x+b'p^N)$$ has minimal discriminant  $2^{-8} a^{-2} b'^2 p^{2N}$. The discriminant  conjecture gives$$ \mathrm{Norm}(\mathfrak{f}_{cr})^{6+\epsilon} = \mathrm{Norm}((\lambda_2 p))^{6+\epsilon} \geqslant C_\epsilon' p^{2N}.$$
 \end{proof}
 \section{Conjectures}
In the spirit of passing from elliptic curves, and their Latt\`{e}s maps, to dynamical systems on the sphere, we present two conjectures.  The result of T. Tucker and the first author in \cite{Szpiro4} suggests that fixing the critical conductor leads to boundedness results.
\begin{conj} (Resultant conjecture for a function field) If K is a function field (over a field k), and $\varphi$ is a self map of degree d of $\mathbb{P}^1_K$ with minimal resultant $R(\varphi)$ and critical conductor $\mathfrak{f}_{cr}(\varphi)$, then there exists a constant $C(K)$ and integer $s(d)$ such that $$\mathrm{deg}(R(\varphi))\leqslant p^e s(d)(C(K)+\mathrm{deg}(\mathfrak{f}_{cr}(\varphi)))$$
where $p$ is the characteristic of $k$ and $e$ is the inseperability degree of the canonical map, induced by $\varphi$, from $\mathrm{Spec}(K)$ to the moduli space $\mathcal{M}_d$. 
 \end{conj}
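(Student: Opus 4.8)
The plan is to run a dynamical version of the proof of Theorem~\ref{szpirostheorem} (in its characteristic-$p$ form), with the morphism $\varphi$ and its classifying map to $\Mcal_d$ playing the role of the elliptic surface and its $j$-line. Given $\varphi$ over $K=k(C)$, the valuative criterion applied to the projective variety $(\Mcal_d)^{ss}$ (cf. the proof of Theorem~\ref{reductionlocus}) produces a morphism $f:C\to(\Mcal_d)^{ss}$, possibly after a finite base change $C'\to C$ that alters $\deg R_\varphi$, $\deg\mathfrak{f}_{cr}$ and $2q-2$ in Riemann--Hurwitz-controlled ways. By Proposition~\ref{wclass} the minimal resultant divisor $R_\varphi$ is, up to the factor $d$ or $2d$, the divisor of a section of $f^{*}\mathcal{L}$ for a fixed line bundle $\mathcal{L}$ on $(\Mcal_d)^{ss}$ built from the $\SL_2$-invariant forms containing a power of $\rho$; hence $\deg R_\varphi\le s_0(d)\,\deg f^{*}\mathcal{L}$ with $s_0(d)$ depending only on the GIT weights. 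So it suffices to bound $\deg f^{*}\mathcal{L}$ by $C(K)+\deg\mathfrak{f}_{cr}$ up to the inseparability factor $p^{e}$; in characteristic zero $p^{e}=1$ and that factor is absent, exactly as in Theorem~\ref{szpirostheorem}.

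The heart of the argument is a dynamical Kodaira--Spencer map. When the classifying map is separable ($e=0$, the generically \'etale case) I would show that its differential extends to a nonzero map $T_C(-\mathfrak{f}_{cr})\to f^{*}T_{\Mcal_d}$: the deformation of $\varphi$ along $C$ acquires logarithmic poles only along the critical bad reduction locus. This is exactly where critical good reduction should be used --- at a point of critical good reduction the ramification divisor of $\varphi$ and its $\varphi$-image specialize without collision, so the reduced map stays separable of the same degree with the same critical configuration and the family does not degenerate there, whereas $\mathfrak{f}_{cr}$ is by definition the divisor recording those collisions; the boundedness philosophy of Szpiro--Tucker \cite{Szpiro4} is the model. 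Dualizing and pairing with an ample class on $(\Mcal_d)^{ss}$ gives $\deg f^{*}\mathcal{L}\le s_1(d)(2q-2+\deg\mathfrak{f}_{cr})$, and $2q-2$, together with any genus-- or base--dependent constants from the base change, is absorbed into $C(K)$. For general $e$ one factors $f=\tilde f\circ\mathrm{Frob}^{e}$ through its separable part $\tilde f$, so that $\deg f^{*}\mathcal{L}=p^{e}\deg\tilde f^{*}\mathcal{L}$, and applies the separable case to $\tilde f$; the integer $s(d)$ then absorbs $s_0(d)s_1(d)$.

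The main obstacle --- and the reason this is only a conjecture --- is the construction and control of that logarithmic Kodaira--Spencer map on $(\Mcal_d)^{ss}$, a singular projective GIT quotient whose boundary $(\Mcal_d)^{ss}\setminus\Mcal_d$ is itself delicate: one must make sense of $T_{\Mcal_d}$ and of tangency to the boundary, prove non-vanishing of the deformation for a genuinely non-isotrivial separable family, and --- hardest --- prove that its polar divisor is bounded \emph{exactly} by $\mathfrak{f}_{cr}$ and not by the larger ordinary bad-reduction locus. Example~\ref{dynamicalszpirofalse} already shows the ordinary conductor is too small to control $\deg R_\varphi$, so the critical conductor is forced on us; establishing that it is nonetheless \emph{large enough} to serve as the logarithmic polar divisor, and in a way insensitive to the choice of coordinates entering the definition of $\mathfrak{f}_{cr}$, is the crux. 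A secondary difficulty is that there is no exact analog of the identity $\deg\Delta_{\Ecal}=12\deg\omega$: the passage from $R_\varphi$ to $\deg f^{*}\mathcal{L}$ is only an inequality, and even the existence of a uniform $s(d)$ requires a careful analysis of the ring of $\SL_2$-invariants and of where the resultant form sits inside it --- precisely the ingredient that makes the degree-two case (Theorems~\ref{degreetwominimal} and~\ref{ssminimaldegreetwo}) tractable while the general case remains open.
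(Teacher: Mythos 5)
The statement you are addressing is stated in the paper as a conjecture; the paper offers no proof of it, and your own write-up concedes that the decisive steps are not carried out. So what you have is a strategy outline in the spirit of the analogy with Theorem~\ref{szpirostheorem} and the Szpiro--Tucker boundedness philosophy, not a proof, and it cannot close the conjecture as it stands. Beyond that general point, two of your intermediate claims are genuinely gapped rather than merely unexecuted. First, the reduction ``$\deg R_\varphi \le s_0(d)\,\deg f^{*}\mathcal{L}$ for a fixed line bundle $\mathcal{L}$ on $(\Mcal_d)^{ss}$'' is not available from the paper's toolkit: the resultant form is not $\SL_2$-invariant (it transforms by a power of the determinant, formula~(\ref{ResGrpAction})), so $\rho$ does not descend to the quotient, and the \emph{minimal} resultant is defined place-by-place through different conjugates, with Proposition~\ref{wclass} supplying only a divisor-class obstruction; the paper does not even settle when a minimal or semi-stable presentation exists globally. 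Turning $R_\varphi$ into the zero divisor of a section of a bundle pulled back along the classifying map $f:C\to(\Mcal_d)^{ss}$ (from Theorem~\ref{reductionlocus}) is itself a nontrivial open step, not bookkeeping.

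Second, the ``dynamical Kodaira--Spencer map with logarithmic poles along $\mathfrak{f}_{cr}$'' is exactly the content of the conjecture, and you give no construction, no non-vanishing argument for non-isotrivial separable families, and no mechanism forcing the polar divisor to be controlled by the \emph{critical} conductor rather than the full singular-reduction locus; moreover, since Definition~\ref{criticalconductor} depends on a choice of coordinates on $\PP^1_K$ (as the paper notes), the statement ``the poles are bounded by $\mathfrak{f}_{cr}$'' is not even well-posed until that dependence is dealt with. The Frobenius factorization handling $p^{e}$ is plausible but likewise heuristic, since $e$ is defined via the map to $\Mcal_d$ and you must check that the separable part is again a classifying map of the same kind with compatible pullback of $\mathcal{L}$. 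In short: the architecture you propose is a reasonable reading of why the authors phrase the conjecture as they do (Example~\ref{dynamicalszpirofalse} rules out the ordinary conductor, so $\mathfrak{f}_{cr}$ is forced), but every load-bearing step remains to be supplied, so this should be presented as a program, not a proof.
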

 
 \begin{conj} (Resultant conjecture for a number field) If K is a number field, and $\varphi$ is a self map of degree d of $\mathbb{P}^1_K$ with minimal resultant of norm $R(\varphi)$ and critical conductor of norm $\mathfrak{f}_{cr}(\varphi)$, then there exists a constant $C(K)$ and integer $s(d)$ such that for every $\epsilon > 0$: $$(R(\varphi))\leqslant_{\epsilon} (C(K)\mathfrak{f}_{cr}(\varphi))^{s(d)+\epsilon}.$$
 \end{conj}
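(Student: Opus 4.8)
Since the statement is a conjecture of ABC strength, what I can offer is a conditional strategy --- ``ABC (or Vojta) implies the Resultant Conjecture over number fields'' --- together with an indication of the one genuinely new local ingredient it needs and of why an unconditional proof is out of reach. The plan is to realise $\log R(\varphi)$ as an arithmetic height, $\log \mathfrak{f}_{cr}(\varphi)$ as the associated reduced conductor, and then apply an ABC-type inequality on an appropriate moduli or configuration space.

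\emph{Reducing to a height inequality.} Fix a presentation $\Phi$ of $\varphi$ and pass to a finite extension $K'/K$ making $\Phi$ semi-stable and rationalising the ramification points; the resulting factors of $[K':K]$ are harmless and can be absorbed into $C(K)$. This gives the morphism $F_\Phi : \spec(\mathcal{O}_{K'}) \to \Pk$ used in the proof of Theorem~\ref{reductionlocus}. By that theorem a semi-stable presentation has singular reduction exactly at the bad primes, and by Proposition~\ref{wclass} the divisor $R_\varphi$ differs from $\div(\rho(\a,\b))$ only by $d$ (or $2d$) times a class bounded in $\Pic(\mathcal{O}_{K'})$. Hence, up to $O_K(1)$, $\log R(\varphi) = h_{V(\rho)}(F_\Phi)$, the height attached to the boundary $\Pk \setminus \Ratd$. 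The ramification divisor of $\varphi$ and its image are at most $2d-2$ points of $\pone_{K'}$, and $\mathfrak{f}_{cr}$ records the primes where two of them collide; collision of $P \ne Q$ at $\mathfrak{p}$ is divisibility of the $2 \times 2$ determinant of homogeneous coordinates, so $\log \mathfrak{f}_{cr}(\varphi)$ is, up to $O_K(1)$, the reduced conductor of the pullback by $F_\Phi$ of a ``branch--discriminant'' divisor $\Delta_{cr} \subset \Pk$ (refined to include the locus where one ramification point meets another).

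\emph{The two inputs.} The first input is a purely local lemma refining Theorem~\ref{reductionlocus}: \emph{at a prime $\mathfrak{p}$ of critical good reduction, $\varphi$ spreads out to a finite flat degree-$d$ cover of $\pone$ over $\mathcal{O}_{C',\mathfrak{p}}$ with unchanged branch data, so $v_{\mathfrak{p}}(\rho)$ for a minimal model is bounded by a constant $s_0(d)$ depending only on $d$} --- a Riemann--Hurwitz / deformation-of-covers statement. Granting it and summing over $\mathfrak{p}$ gives $\log R(\varphi) \le s_0(d)\log\mathfrak{f}_{cr}(\varphi) + (\text{a height supported on the primes of critical bad reduction only})$. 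The second input handles that last term: it is a height on $\pone_{K'}$ (or on $\mathrm{Sym}^{2d-2}\pone$) relative to a divisor whose reduced conductor is again $\le \mathfrak{f}_{cr}$ by construction, so the ABC conjecture for that divisor --- with the $\epsilon$ entering exactly as in the first author's discriminant conjecture, and the exponent $s(d)$ absorbing the Riemann--Hurwitz multiplicities and $\deg\Delta_{cr}$ --- produces $R(\varphi)\leqslant_\epsilon (C(K)\mathfrak{f}_{cr}(\varphi))^{s(d)+\epsilon}$. The case $d=2$ --- where Theorems~\ref{degreetwominimal} and~\ref{ssminimaldegreetwo} pin down $v_{\mathfrak{p}}(\rho)$ at semi-stable primes and Proposition~\ref{critnumberfield} already ties the critical conductor to the discriminant conjecture --- should be the template and fix $s(2)$.

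\emph{Main obstacle.} There are two, of very different natures. The ``soft'' one is the local lemma just stated: turning ``the ramification points and their images stay distinct mod $\mathfrak{p}$'' into a flat degree-$d$ model over $\mathcal{O}_{C',\mathfrak{p}}$ requires controlling how semi-stable degeneration in $\Pk$ interacts with the branch data, and Examples~\ref{unstable} and~\ref{alltogether} show the ordinary conductor cannot see this --- so this genuinely needs new work, though it looks tractable. The ``hard'' obstacle is that, even with that lemma in hand, the resulting inequality has ABC strength and is not available unconditionally; indeed Proposition~\ref{critnumberfield} shows that for the natural counterexample family it is provably equivalent to (a case of) the discriminant conjecture. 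Accordingly the honest target of the proof is the conditional implication, with the local lemma as the new contribution; an unconditional proof would entail ABC.
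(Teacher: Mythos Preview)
The paper offers no proof of this statement: it is presented as an open conjecture, with no argument attached. So there is nothing to compare your proposal against in the sense the prompt requests. You have correctly identified that the statement is conjectural and of ABC strength, and you offer a conditional strategy rather than a proof; that is the honest position, and the paper itself goes no further (indeed, Proposition~\ref{critnumberfield} only shows that the discriminant conjecture bounds the critical conductor from below in one explicit family, which is evidence for, not a proof of, the conjecture).

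That said, your conditional outline has real gaps beyond the ABC input. First, you ``pass to a finite extension $K'/K$ making $\Phi$ semi-stable,'' but the paper explicitly flags the existence of a semi-stable presentation as an open question (see the remark immediately after the proof of Theorem~\ref{reductionlocus}); you cannot simply assume this. Second, your key local lemma --- that critical good reduction at $\mathfrak p$ forces $v_{\mathfrak p}$ of the minimal resultant to be bounded by some $s_0(d)$ --- is the entire content of the conjecture at the local level, and you have not given an argument for it; ``Riemann--Hurwitz / deformation-of-covers'' is a slogan, not a proof, and the paper's Example~\ref{unstable} already shows that naive reduction arguments interact badly with stability. Third, even granting both inputs, the passage from ``height supported on critical-bad primes'' to an ABC-type inequality on $\mathrm{Sym}^{2d-2}\pone$ is not worked out: you have not specified the divisor $\Delta_{cr}$, checked that its reduced conductor really is controlled by $\mathfrak f_{cr}$, or explained why the relevant map to the configuration space is nondegenerate enough for Vojta/ABC to apply. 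Your proposal is a plausible heuristic for why the conjecture should follow from ABC, but it is not yet a conditional proof.
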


\newpage
 \end{document}